\def\Xint#1{\mathchoice
    {\XXint\displaystyle\textstyle{#1}}%
    {\XXint\textstyle\scriptstyle{#1}}%
    {\XXint\scriptstyle\scriptscriptstyle{#1}}%
    {\XXint\scriptscriptstyle\scriptscriptstyle{#1}}%
    \!\int}
    \def\XXint#1#2#3{{\setbox0=\hbox{$#1{#2#3}{\int}$}
    \vcenter{\hbox{$#2#3$}}\kern-.5\wd0}}
    \def\dashint{\Xint-}
\renewcommand{\chi}{{\bf 1}}
\theoremstyle{plain}
\newtheorem{theorem}[equation]{Theorem}
\newtheorem{lemma}[equation]{Lemma}
\newtheorem{corollary}[equation]{Corollary}
\newtheorem{proposition}[equation]{Proposition}
\theoremstyle{definition}
\theoremstyle{remark}
\numberwithin{equation}{section}
\numberwithin{equation}{section}
\def \R{ \mathbb{R} }
\def \N{ \mathbb{N} }
\def \Z{ \mathbb{Z} }
\def \C{ \mathbb{C} }
\def \iint{\int\!\!\!\int}
\def\div{\mathop{\rm div}}
\renewcommand{\Re}{{\rm Re}\,}
\DeclareMathOperator{\supp}{supp}
\begin{document}

\allowdisplaybreaks

\title[Tent spaces and extrapolation]{Tent space boundedness via extrapolation}

\author{Pascal Auscher}
\address{Pascal Auscher, Laboratoire de Math\'{e}matiques d'Orsay, Univ. Paris-Sud, CNRS, Universit\'{e} Paris-Saclay, 91405 Orsay, France}
\email{Pascal.Auscher@math.u-psud.fr}

\author{Cruz Prisuelos-Arribas}

\address{Cruz Prisuelos-Arribas
Instituto de Ciencias Matem\'aticas (CSIC-UAM-UC3M-UCM)
\\
C/ Nicol\'as Cabrera, 13-15
\\
E-28049 Madrid, Spain} \email{cruz.prisuelos@icmat.es}

\thanks{The first author was partially supported by the ANR project ``Harmonic Analysis at its Boundaries'', ANR-12-BS01-0013. The second author has been supported by the European Research
Council under the European Union's Seventh Framework Programme (FP7/2007-2013)/ ERC
agreement no. 615112 HAPDEGMT}

\date{March 3, 2016}
\subjclass[2010]{42B20,42B25,42B35,47B34}

\keywords{tent spaces, maximal operators, singular integrals, atoms, extrapolation, amalgalm spaces}

\begin{abstract} We study the action of operators on tent spaces such as maximal operators, Calder\'on-Zygmund operators, Riesz potentials. We also consider singular non-integral operators. We obtain boundedness as an application of extrapolation methods in the Banach range.  In the non Banach range, boundedness results for Calder\'on-Zygmund operators follows by using an appropriate atomic theory. We end with some consequences on amalgalm spaces. \end{abstract}

\maketitle

\tableofcontents

\bigskip

\section{Introduction}

For a measurable function $F:\R^{n+1}_+:=\R^n\times (0,\infty) \to \C$ and $0<r<\infty$, let \begin{equation}
\label{eq:A}
\mathcal{A}_r(F)(x):=\left(\int_0^{\infty}\int_{B(x,t)}|F(y,t)|^r\frac{dy\,dt}{t^{n+1}}\right)^{\frac{1}{r}}, \quad x\in \R^n.
\end{equation}

Consider the tent space $T_r^q$, $0<q,r<\infty$, defined as the space of all measurable functions $F$ such that $\mathcal{A}_r(F)\in L^q(\R^n)$. We also define the weak tent space $wT^q_r$ as the space of all measurable functions $F$ such that $\mathcal{A}_r(F)\in L^{q,\infty}(\R^n)$.

These spaces play an important role in harmonic analysis as evidenced in \cite{CoifmanMeyerStein}, starting from the use of Lusin area functional on harmonic functions. 
They are heavily used in the recent theory of Hardy spaces associated with operators (\cite{HofmannMayboroda}, \cite{HofmannMayborodaMcIntosh}). They also appear if one wants to study maximal regularity operators arising from  some linear or nonlinear partial differential equations (\cite{KT}, \cite{AKMP}). In particular, one wants to  understand  how some (sub)linear operators act on them.  More precisely the following two types of operators appear. First,  
$$\mathcal{T}(F)(x,t):= T_{t}(F(\cdot, t))(x),$$
where $T_{t}$ acts on functions on $\R^n$. Second,  $$\mathcal{T}(F)(x,t): = \int_{0}^\infty T_{t,s}(F(\cdot, s))(x) \frac{ds}{s}$$ where $T_{t,s}$ acts on functions on $\R^n$.  For the second type, we refer to  \cite{AMcR}, \cite{HNP}, \cite{AKMP}.  Positive results on $T^q_{2}$ all rely on the use of $L^2$ off-diagonal estimates (or improved $L^{\min(q,2)}-L^{\max(q,2)}$ off diagonal estimates) and change of angle in the tent space norms.  

For the first type, there is a simple sufficient condition that also depends on the change of angle.
Let us assume that $T_{t}$ acts on $L^2$ functions with compact support and
\begin{equation}
\label{eq:off}
\int_{B(x,t)} |T_{t}(f)(y)|^2\, dy \lesssim 2^{-2j\gamma} \int_{C_{j}(B(x,t))} |f(y)|^2\, dy
\end{equation}
with some $\gamma\ge 0$, provided $f$ is supported in $C_{j}(B(x,t))$ where $C_{j}(B(x,t))=B(x,4t)$ if $j=1$ and 
$C_{j}(B(x,t))=B(x,2^{j+1}t)\setminus B(x,2^jt)$ when $j\ge2$. 
Then 
$$
\mathcal{A}_2(\mathcal{T}(F))(x) \lesssim \sum_{j\ge1} 2^{-j\gamma}\mathcal{A}_2^{(2^{j+1})}(F)(x)
$$
where 
$\mathcal{A}_2^{(\alpha)}$ is defined as $\mathcal{A}_2$ with $B(x,t)$ replaced by $B(x,\alpha t)$ in \eqref{eq:A}. Using the well known change of angle inequality
$$
\| \mathcal{A}_2^{(\alpha)}F\|_{L^q(\R^n)}\lesssim \alpha^{n\max(1/2, 1/q)} \|\mathcal{A}_2F\|_{L^q(\R^n)}
$$
we can conclude for the $T^q_{2}$ boundedness of $\mathcal{T}$ if $\gamma>n\max(1/2, 1/q)$. 
Note in particular that if $\gamma\le n/2$, this argument gives no boundedness, even for $q=2$. Often, the operators  $T_{t}$ are assumed to be uniformly  bounded on $L^2(\R^n)$, which gives $T^2_{2}$ boundedness of  $\mathcal{T}$, whatever $\gamma$. Still, a condition $\gamma>0$ does not seem to guarantee boundedness on $T^q_{2}$ for a range of $q$ about $2$ in general. Thus, there is no available general criterion when $\gamma\le n/2$.

If we let $T_{t}=T$ be independent of $t$ and be a Calder\'on-Zygmund operator, then one obtains 
\eqref{eq:off} with $\gamma=n/2$. Similarly we get $\gamma=n/2$ if we let $T_{t}=\mathcal{M}$ be the centered maximal operator. As said, this argument does not apply. 

On the other hand, it is well-known that if we replace $\mathcal{A}_2$ by the vertical norm
$\mathcal{V}_2$ where $$
\mathcal{V}_r(F)(x)=\left(\int_0^{\infty}|F(x,t)|^r\frac{dt}{t}\right)^{\frac{1}{r}} , \quad x\in \R^n,
$$
then for $T$ being the maximal operator $\mathcal{M}$, 
\begin{equation}
\label{eq:FS}
\|\mathcal{V}_r(\mathcal{M}(F))\|_{L^q(\R^n)} \lesssim \|\mathcal{V}_r(F)\|_{L^q(\R^n)}
\end{equation}
is the vector-valued maximal inequality of Fefferman-Stein, valid when $1<q,r<\infty$ (\cite{FeffermanStein}). It is thus a natural question whether $\mathcal{V}_r$ can be replaced by $\mathcal{A}_r$, that is whether 
the maximal operator, identified with its tensor product with the identity on functions of the $t$ variable, is bounded on $T^q_{r}$.

A modern simple proof of \eqref{eq:FS} is by invoking  extrapolation (see \cite{CruzMartellPerez}): it suffices to prove 
$$
\|\mathcal{V}_r(\mathcal{M}(F))\|_{L^r(w)} \lesssim \|\mathcal{V}_r(F)\|_{L^r(w)}
$$
for any $w\in A_{r}$ to obtain \eqref{eq:FS}, and the latter follows from Muckenhoupt's theorem. 
Thus we are tempted to follow the same route and indeed, we shall prove 
$$
\|\mathcal{A}_r(\mathcal{M}(F))\|_{L^r(w)} \lesssim \|\mathcal{A}_r(F)\|_{L^r(w)}
$$
for any $w\in A_{r}$ using simple upper bounds and known results. We note that the functionals
$\mathcal{V}_r$ and $\mathcal{A}_r$ do not compare on $L^q$ when $q\ne r$,  as shown in   \cite{AuscherHofmannMartell}. Hence,  one cannot deduce such results directly. 

For other operators, we shall also show how extrapolation allows us to conclude tent space boundedness:  we will consider Calder\'on-Zygmund operators,  Riesz potentials and fractional maximal functions, in which case, one looks for $T^p_{r}$ to $T^{q}_{r}$ boundedness for some $q>p$. 
We will also consider singular non integral operators such as the Riesz transform of elliptic operators to test applicability of our methods. In this case, it is a representation of the operator in the form
$\int_{0}^\infty \theta_{s}\, \frac{ds}{s}
$
that is essential. We obtain tent space boundedness with limited range in $q$ and $r$ that is consistent with that of the $L^p$ theory. 

For Calder\'on-Zygmund operators, we shall explore  what happens when $q\le 1$. At $q=1$, we prove a weak-type inequality. We can also take advantage of cancellations in using atomic decompositions at the level of tent spaces.  Then,  atoms need to satisfy the additional condition
$$
\int_{\R^n} A(x,t)\, dx=0, \quad \textrm{for a.e.}\, t>0
$$
and we get results for $q> \frac{n}{n+1}$. Imposing more vanishing moments against polynomials allows to get smaller values of $q$ as it is the case with Hardy spaces on $\R^n$. 

As easy corollaries, we obtain  results in amalgam spaces  in Section  \ref{section:amalgam}.

\section{Main results}\label{sec:main}

As mentioned, if $(T_{t})_{t>0}$ is a family of operators on $\R^n$ acting on (some) measurable functions, we let $\mathcal{T}$ defined by
$$
\mathcal{T}(F)(x,t)=T_{t}(F(\cdot, t))(x), 
$$
provided the formula makes sense, that is provided $F(\cdot, t)$ belongs to an appropriate domain of $T_{t}$. 
If $T$ is a single operator and $T_{t}=T$ for each $t>0$ then $\mathcal{T}= T\otimes I$. In that case and from now on,  we use the same notation by a slight abuse. 

We are ready to state our main results. Precise definitions will be given later.

\begin{theorem}\label{thm:maximal}
Let $\mathcal{M}$ be the centered Hardy-Littlewood maximal operator. For all $1<r<\infty$,
\begin{list}{$(\theenumi)$}{\usecounter{enumi}\leftmargin=1cm \labelwidth=1cm\itemsep=0.2cm\topsep=.0cm \renewcommand{\theenumi}{\alph{enumi}}}

\item 
$
\mathcal{M}:T^{q}_r\rightarrow T^q_r
$, for all $1<q<\infty;$

\item $\mathcal{M}:T^{1}_r\rightarrow wT^1_r$.

\end{list}

\end{theorem}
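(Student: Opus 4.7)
The plan is to exploit the Rubio de Francia extrapolation scheme sketched in the introduction: both (a) and (b) will follow from a single weighted inequality at the exponent $r$, namely that for every $w\in A_r$,
\begin{equation*}
\|\mathcal{A}_r(\mathcal{M}F)\|_{L^r(w)}\lesssim \|\mathcal{A}_r(F)\|_{L^r(w)},
\end{equation*}
with constants depending only on $n$, $r$, and $[w]_{A_r}$. To prove this I would first use Fubini's theorem to rewrite, for any measurable $G$,
\begin{equation*}
\|\mathcal{A}_r(G)\|_{L^r(w)}^r \;=\; \int_0^\infty \int_{\R^n} |G(y,t)|^r\,\frac{w(B(y,t))}{t^n}\,\frac{dy\,dt}{t}.
\end{equation*}
Setting $w_t(y):=w(B(y,t))/|B(y,t)|$, the target inequality reduces to the slicewise bound $\int_{\R^n}|\mathcal{M}(F(\cdot,t))(y)|^r w_t(y)\,dy\lesssim \int_{\R^n}|F(y,t)|^r w_t(y)\,dy$ with constants independent of $t>0$, which by Muckenhoupt's theorem is equivalent to showing $w_t\in A_r$ with uniform constant.

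The latter is a classical stability property: $w_t=\varphi_t*w$ with $\varphi_t=|B(0,t)|^{-1}\chi_{B(0,t)}$, and the $A_r$ class is preserved by such averaging. One reduces to $t=1$ by scaling and then checks the $A_r$ condition on a ball $B=B(x_0,R)$ directly, splitting into the regimes $R\gtrsim 1$ and $R\lesssim 1$. Granted this input, Muckenhoupt's theorem applied slicewise and then integration in $t$ produce the desired weighted estimate. Viewing it as a Rubio de Francia pair inequality for the non-negative measurable functions $\mathcal{A}_r(\mathcal{M}F)$ and $\mathcal{A}_r(F)$ on $\R^n$, the standard extrapolation theorem (see \cite{CruzMartellPerez}) immediately produces both strong $L^q(w)$ bounds for all $1<q<\infty$ and $w\in A_q$, and the weak-type endpoint $L^{1,\infty}(w)$ for $w\in A_1$. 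Taking $w\equiv 1$ yields (a) and (b), respectively.

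The only genuinely delicate step is the uniform-in-$t$ $A_r$ control of $w_t$; the Fubini rewriting, the slicewise application of Muckenhoupt, and the two extrapolation conclusions are all routine once that input is in place. A preliminary joint-measurability check for $(y,t)\mapsto\mathcal{M}(F(\cdot,t))(y)$ should be included but causes no trouble, since $\mathcal{M}$ preserves lower semicontinuity in the spatial variable and one can approximate $F$ by simple functions in the $t$-direction.
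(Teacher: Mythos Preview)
Your approach to part (a) is correct and proceeds by a genuinely different route from the paper. Both arguments first establish the weighted inequality $\|\mathcal{A}_r(\mathcal{M}F)\|_{L^r(w)}\lesssim\|\mathcal{A}_r(F)\|_{L^r(w)}$ for all $w\in A_r$ and then extrapolate; the difference lies in how that inequality is proved. You use Fubini to reduce to a slicewise bound against the averaged weight $w_t=\varphi_t*w$ and then invoke the stability $[\varphi_t*w]_{A_r}\lesssim_{n,r}[w]_{A_r}$ (a correct and classical fact; the small-radius regime is handled via the doubling property of $A_r$ weights). The paper instead proves a pointwise lemma,
\[
\Big(\dashint_{B(x,t)}|\mathcal{M}f|^r\Big)^{1/r}\lesssim\Big(\dashint_{B(x,2t)}|f|^r\Big)^{1/r}+\mathcal{M}_u\Big(\dashint_{B(\cdot,t)}|f|\Big)(x),
\]
and applies Muckenhoupt's theorem to the outer $\mathcal{M}_u$. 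Your route is more direct for (a); the paper's pointwise lemma, however, is precisely what drives their proof of (b).

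For part (b) there is a genuine gap. Standard Rubio de Francia extrapolation (including the versions in \cite{CruzMartellPerez}) does \emph{not} produce the weak-type endpoint at $q=1$: starting from strong $L^r(w)$-bounds for all $w\in A_r$, one obtains strong (or weak) $L^q(w)$-bounds only for $1<q<\infty$. The mechanism of the Rubio de Francia algorithm requires boundedness of $\mathcal{M}$ on the weighted space at the target exponent, and this fails at $q=1$. Your slicewise setup does not circumvent this: the $L^r(w_t)$ bound does not combine across $t$ into an $L^{1,\infty}$ estimate for $\mathcal{A}_r(\mathcal{M}F)$. The paper proves (b) directly from the pointwise lemma above: the local term is handled by change of angle in $T^1_r$, and the $\mathcal{M}_u$ term, viewed through the vertical functional $\mathcal{V}_r$, is controlled in $L^{1,\infty}$ by the Fefferman--Stein vector-valued weak-type $(1,1)$ inequality, together with the comparison $\|\mathcal{V}_r(\widetilde F)\|_{L^1}\lesssim\|\mathcal{A}_r(F)\|_{L^1}$ (where $\widetilde F(x,t)=\dashint_{B(x,t)}|F|$). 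To close (b) you will need an extra ingredient of this kind; the weighted inequality alone is not enough.
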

\begin{theorem}\label{thm:C-Zoperator}
Let $\mathcal{T}$ be a Calder\'on-Zygmund operator on $\R^n$ of order $\delta\in (0,1]$	.  For all $1<r<\infty,
$
\begin{list}{$(\theenumi)$}{\usecounter{enumi}\leftmargin=1cm \labelwidth=1cm\itemsep=0.2cm\topsep=.0cm \renewcommand{\theenumi}{\alph{enumi}}}

\item $
\mathcal{T}:T^q_r\rightarrow T^q_r$, for all $1<q<\infty;$  

\item $
\mathcal{T}:T^1_r\rightarrow wT^1_r;$

\item  $\mathcal{T}:\mathfrak{T}^q_r\rightarrow T_r^q$,\, for all $\frac{n}{n+\delta}<q\leq 1;$

\item  $\mathcal{T}:\mathfrak{T}^q_r\rightarrow \mathfrak{T}^q_r$,\, for all $\frac{n}{n+\delta}<q\leq 1$, if $\mathcal{T}^*(1)=0$.

\end{list}
\end{theorem}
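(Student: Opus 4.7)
\textbf{Proof plan for Theorem~\ref{thm:C-Zoperator}(d).} The plan is to use the atomic decomposition of $\mathfrak{T}^q_r$ combined with the kernel regularity of $\mathcal{T}$, and to exploit $\mathcal{T}^*1=0$ to preserve the per-slice cancellation that distinguishes $\mathfrak{T}^q_r$ from $T^q_r$. Since $\mathfrak{T}^q_r$ is a $q$-Banach quasinormed space for $q\le 1$, the quasi-triangle inequality reduces everything to proving $\|\mathcal{T}(A)\|_{\mathfrak{T}^q_r}\lesssim 1$ uniformly over $\mathfrak{T}^q_r$-atoms $A$. Fix such an atom: $A$ is supported in the tent $\widehat{B}$ over a ball $B$, size-normalized in terms of $|B|$, with $\int_{\R^n}A(x,t)\,dx=0$ for a.e.\ $t>0$.

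The new ingredient relative to part (c) is that $\mathcal{T}(A)$ inherits the per-slice cancellation. For a.e.\ $t$, $A(\cdot,t)$ is bounded, compactly supported, and mean-zero, hence belongs to $H^1(\R^n)$. By $H^1$--$\mathrm{BMO}$ duality and the hypothesis $\mathcal{T}^*1=0$,
$$\int_{\R^n}\mathcal{T}(A)(x,t)\,dx=\langle \mathcal{T}(A(\cdot,t)),1\rangle=\langle A(\cdot,t),\mathcal{T}^*1\rangle=0\qquad\text{for a.e.\ }t>0.$$
I would then decompose spatially: with $B_0:=4B$ and $B_j:=2^{j+1}B$, write $\mathcal{T}(A)=\sum_{j\ge 0}G_j$ where $G_0:=\mathcal{T}(A)\chi_{\widehat{B_0}}$ and $G_j:=\mathcal{T}(A)\chi_{\widehat{B_j}\setminus\widehat{B_{j-1}}}$ for $j\ge 1$. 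Applying the H\"older-kernel regularity of $\mathcal{T}$ against the slice-by-slice moment condition on $A$ produces a size bound of the form $\|G_j\|_{T^\infty_r}\lesssim 2^{-j(n+\delta)}|B|^{-1/q}$, so that after rescaling, $G_j=\lambda_j\,a_j^{\sharp}$ with $a_j^{\sharp}$ a size-normalized $T^q_r$-atom supported in $\widehat{B_j}$ and $\lambda_j:=2^{-j((n+\delta)-n/q)}$. The restriction $q>n/(n+\delta)$ is exactly what guarantees $\sum_j|\lambda_j|^q<\infty$.

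The main obstacle is that each $G_j$ generally fails $\int G_j(\cdot,t)\,dx=0$ and so is not yet a $\mathfrak{T}^q_r$-atom. I would repair this by the classical telescoping correction: set $c_j(t):=\int_{\R^n}G_j(x,t)\,dx$, pick a bump $\phi_j(x,t):=c_j(t)\psi_j(x,t)$ supported in $\widehat{B_j}$ with $\psi_j(\cdot,t)$ an $L^1$-normalized bounded density on $B_j$, and replace $G_j$ by $\tilde G_j:=G_j-\phi_j+\phi_{j-1}$ (with $\phi_{-1}:=0$). The per-slice cancellation proved above forces $\sum_j c_j(t)=0$ for a.e.\ $t$, so the telescoping series still reassembles $\mathcal{T}(A)$; each $\tilde G_j$ is supported in $\widehat{B_j}$, now satisfies $\int\tilde G_j(\cdot,t)\,dx=0$, and still obeys $\|\tilde G_j\|_{T^\infty_r}\lesssim 2^{-j(n+\delta)}|B|^{-1/q}$ since $|c_j(t)|\lesssim |B_j|\,\|G_j(\cdot,t)\|_{\infty}$. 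Thus $\tilde G_j=\lambda_j a_j$ with $a_j$ a genuine $\mathfrak{T}^q_r$-atom, and $q$-summing completes the proof. The delicate technical point is ensuring simultaneously that the $\phi_j$ lie in the correct tents, carry the prescribed $x$-mass, and respect the size-normalization required of corrected atoms; this is the only place where the argument departs from part (c).
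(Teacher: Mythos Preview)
Your strategy is essentially the paper's: it proves that $\mathcal{T}$ sends a $\mathfrak{T}^q_r$-atom to a $\mathfrak{T}^q_r$-\emph{molecule} (the annular size decay comes from part~(c), and the per-slice mean zero from $\mathcal{T}^*1=0$ exactly as you argue), and then shows in a separate proposition that any $\mathfrak{T}^q_r$-molecule decomposes into $\mathfrak{T}^q_r$-atoms via a telescoping correction. So the architecture is the same; you have simply inlined the molecule-to-atoms step.

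However, your telescoping is miswritten and, as stated, does not produce mean-zero pieces. With $\phi_j$ carrying mass $c_j$ and $\tilde G_j:=G_j-\phi_j+\phi_{j-1}$, one computes
\[
\int_{\R^n}\tilde G_j(x,t)\,dx \;=\; c_j(t)-c_j(t)+c_{j-1}(t)\;=\;c_{j-1}(t),
\]
which is not zero. The correct device is to let $\phi_j$ carry the \emph{partial sums} $S_j(t):=\sum_{i\le j}c_i(t)$; then $\int\tilde G_j=c_j-S_j+S_{j-1}=0$, and the series still telescopes because $S_j\to 0$ (this is where $\sum_j c_j=0$, i.e.\ the per-slice cancellation of $\mathcal{T}(A)$, is actually used). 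To control the size of the correction you then need $|S_j(t)|$, and for that one writes $S_j=-\sum_{i>j}c_i$ and sums the geometric tail. This is exactly what the paper does, splitting the decomposition into two families $\alpha_j$ (the mean-corrected annular pieces) and $\beta_j$ (the Abel-summed correction terms). A second small issue: your estimate $|c_j(t)|\lesssim |B_j|\,\|G_j(\cdot,t)\|_\infty$ is not available for the central piece $G_0$, since $\mathcal{T}(A)(\cdot,t)$ need not be bounded on $4B$; there you must use $|c_0(t)|\le |4B|^{1-1/r}\|G_0(\cdot,t)\|_{L^r}$ and the $T^r_r$-boundedness of $\mathcal{T}$, which is what the paper's molecule framework (working with $L^r$ size conditions throughout) handles automatically.
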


\begin{theorem}\label{thm:rieszpotential}
For $0<\alpha<n$, $\frac{n}{n-\alpha}<r<\infty$, and $1<p<q<\infty$ such that $\frac{1}{p}-\frac{1}{q}=\frac{\alpha}{n}$, 
$$
\mathcal{I}_{\alpha}, \mathcal{M}_{\alpha}:T^p_{r}\rightarrow T^q_{r}.
$$
\end{theorem}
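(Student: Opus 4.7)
The plan is to deduce both boundedness results from off-diagonal extrapolation applied to the pair $(\mathcal{A}_r(\mathcal{I}_\alpha F),\mathcal{A}_r F)$, and analogously to $(\mathcal{A}_r(\mathcal{M}_\alpha F),\mathcal{A}_r F)$. The principle we invoke (see e.g.\ \cite{CruzMartellPerez}) is as follows: for $1\le p_0\le q_0<\infty$ with $1/p_0-1/q_0=\alpha/n$, if a pair $(f,g)$ of nonnegative measurable functions on $\R^n$ satisfies
\[
\|f\|_{L^{q_0}(w^{q_0})}\le C\,\|g\|_{L^{p_0}(w^{p_0})}
\]
for every Muckenhoupt-Wheeden weight $w\in A_{p_0,q_0}$, then the analogous weighted estimate holds for every $(p,q)$ with $1<p\le q<\infty$, $1/p-1/q=\alpha/n$, and every $w\in A_{p,q}$. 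Setting $w\equiv 1$ gives exactly the desired $T^p_r\to T^q_r$ boundedness. Since $\mathcal{M}_\alpha|f|\lesssim \mathcal{I}_\alpha|f|$ pointwise, the fractional-maximal case reduces to the Riesz-potential one.

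The base pair we select is $q_0=r$ and $p_0=rn/(n+r\alpha)$: the constraint $p_0>1$ coincides with the hypothesis $r>n/(n-\alpha)$. With this choice, the Fubini identity
\[
\|\mathcal{A}_r G\|_{L^r(w^r)}^r=\int_0^\infty\!\int_{\R^n}|G(y,t)|^r\,\frac{w^r(B(y,t))}{t^n}\,\frac{dy\,dt}{t}
\]
turns the outer $L^r(w^r)$-norm of the area functional into a slice-wise integral against the dilated weight $W_t(y)^r:=w^r(B(y,t))/t^n$. The plan for the base case is then: (i) show that $W_t$ lies in $A_{p_0,r}$ with constant uniform in $t$, a consequence of the $A_{p_0,r}$ condition on $w$; (ii) apply the classical Muckenhoupt-Wheeden inequality $\mathcal{I}_\alpha:L^{p_0}(W_t^{p_0})\to L^r(W_t^r)$ slice-wise; (iii) integrate in $t$ and close the estimate via Minkowski's integral inequality (valid because $r/p_0\ge 1$) followed by a second Fubini identification to recover $\|\mathcal{A}_r F\|_{L^{p_0}(w^{p_0})}^r$ on the right-hand side.

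The main obstacle is step (iii): arranging the Minkowski/Fubini interchange so that the bound genuinely matches the tent-space norm $\|\mathcal{A}_r F\|_{L^{p_0}(w^{p_0})}$, rather than an inequivalent quantity such as $\|\mathcal{V}_r F\|_{L^{p_0}(w^{p_0})}$ (which, as recalled in the introduction, does not compare on $L^{p_0}$ when $p_0\ne r$). Keeping uniform track in step (i) of the $A_{p_0,r}$-constant of the dilated weight is a related, more routine, technical point. The boundary case $r=n/(n-\alpha)$ forbidden by the hypothesis is exactly where $p_0=1$ and off-diagonal extrapolation ceases to provide a nontrivial range.
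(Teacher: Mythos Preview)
Your sketch identifies the right extrapolation framework and the right base exponent $p_0=rn/(n+r\alpha)$, but the ``main obstacle'' you flag in step~(iii) is not resolved, and in fact cannot be resolved along the lines you suggest. After the slice-wise Muckenhoupt--Wheeden estimate you arrive at
\[
\|\mathcal{A}_r(\mathcal{I}_\alpha F)\|_{L^r(w^r)}^r
\ \lesssim\
\int_0^\infty\Big(\int_{\R^n}|F(y,t)|^{p_0}W_t(y)^{p_0}\,dy\Big)^{r/p_0}\frac{dt}{t},
\]
with $W_t(y)^r=t^{-n}w^r(B(y,t))$. Minkowski (valid since $r/p_0\ge1$) then gives
\[
\int_{\R^n}\Big(\int_0^\infty|F(y,t)|^{r}\,W_t(y)^{r}\,\frac{dt}{t}\Big)^{p_0/r}dy
=\int_{\R^n}\Big(\int_0^\infty|F(y,t)|^{r}\,\dashint_{B(y,t)}w^r\,\frac{dt}{t}\Big)^{p_0/r}dy.
\]
At this point the exponent $p_0/r<1$ sits \emph{inside} the $y$-integral, so there is no ``second Fubini identification'' available: the expression above is a weighted vertical quantity, not $\|\mathcal{A}_r F\|_{L^{p_0}(w^{p_0})}^{p_0}=\int_{\R^n}(\mathcal{A}_r F)^{p_0}w^{p_0}$. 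As you yourself note, vertical and conical functionals are incomparable on $L^{p_0}$ when $p_0\ne r$, so this bound does not close. The difficulty is structural, not merely technical.

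The paper avoids this trap by inserting a pointwise lemma \emph{before} extrapolation. One shows that for $\vartheta=p_0$,
\[
\Big(\dashint_{B(x,t)}|\mathcal{I}_\alpha f|^r\Big)^{1/r}
\lesssim
t^{\alpha}\Big(\dashint_{B(x,5t)}|f|^{\vartheta}\Big)^{1/\vartheta}
+\mathcal{I}_\alpha\big(\widetilde f\big)(x),
\qquad \widetilde f(x):=\dashint_{B(x,t)}|f|,
\]
splitting into a local term (handled by an embedding of tent spaces with different homogeneities) and a global term where $\mathcal{I}_\alpha$ now acts on the \emph{already averaged} function $\widetilde F(x,t)$. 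For this second term one applies Muckenhoupt--Wheeden in $x$ with the \emph{original} weight $w$ (no dilated $W_t$ needed), then Minkowski in $t$, and finally Jensen's inequality $|\widetilde F(x,t)|^r\le \dashint_{B(x,t)}|F|^r$ turns the resulting vertical integral of $\widetilde F$ into exactly $\mathcal{A}_r(F)(x)^r$. The pre-averaging is what makes the conical norm reappear after Minkowski; without it, your step~(iii) cannot land on $\|\mathcal{A}_r F\|_{L^{p_0}(w^{p_0})}$.
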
%
\begin{theorem}\label{thm:riesztransform} Let $L=-\div(A\nabla)$ be an elliptic operator with complex-valued coefficients. 
For $q_-(L)<q,r<q_+(L)$ we have
$$
\nabla L^{-\frac{1}{2}}:T^q_r\rightarrow T^q_r.
$$
\end{theorem}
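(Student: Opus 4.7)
The plan is to reuse the extrapolation template the paper has already set up for Theorems \ref{thm:maximal}--\ref{thm:rieszpotential}: reduce boundedness on $T^q_r$ to a weighted $L^r$ estimate for $\mathcal{A}_r$, and recover the whole range of $q$ via Rubio de Francia extrapolation. What changes here is that the weight class is the limited-range one tied to $L$. Specifically, by the Auscher--Martell theory, $\nabla L^{-1/2}$ is bounded on $L^r(w)$ for every $r\in(q_-(L),q_+(L))$ and every weight in
$$
\mathcal{W}_r := A_{r/q_-(L)}\cap RH_{(q_+(L)/r)'}.
$$

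The first task is the weighted tent inequality
$$
\|\mathcal{A}_r(\mathcal{T}F)\|_{L^r(w)}\lesssim \|\mathcal{A}_r(F)\|_{L^r(w)}, \qquad w\in\mathcal{W}_r,
$$
with $\mathcal{T}=\nabla L^{-1/2}\otimes I$. By Fubini the left-hand side equals
$$
\bigg(\int_0^\infty\!\int_{\R^n}\bigl|\nabla L^{-1/2}(F(\cdot,t))(y)\bigr|^r W_t(y)\,\frac{dy\,dt}{t}\bigg)^{1/r},
$$
with $W_t(y) := |B(y,t)|^{-1}\!\int_{B(y,t)}w(x)\,dx$. I would then check that $W_t$ lies in $\mathcal{W}_r$ with constants uniform in $t$, apply the weighted $L^r(W_t)$ boundedness of $\nabla L^{-1/2}$ slice by slice, and reverse Fubini.

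The second task is to feed the above into Rubio de Francia's limited-range extrapolation theorem, applied to the pair of nonnegative functionals $(\mathcal{A}_r(\mathcal{T}F),\mathcal{A}_r(F))$. The weighted estimate, valid for every $w\in\mathcal{W}_r$, automatically upgrades to
$$
\|\mathcal{A}_r(\mathcal{T}F)\|_{L^q(w)}\lesssim \|\mathcal{A}_r(F)\|_{L^q(w)}
$$
for every $q\in(q_-(L),q_+(L))$ and every $w\in\mathcal{W}_q$. Taking $w\equiv 1$ (which lies in $\mathcal{W}_q$ precisely for this range of $q$) yields $\mathcal{T}:T^q_r\to T^q_r$.

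The main obstacle will be the uniform-in-$t$ stability $W_t\in\mathcal{W}_r$. The $A_{r/q_-(L)}$ membership with constants independent of $t$ should reduce, via the pointwise bound $W_t\le Mw$ together with a matching lower bound on fixed balls, to the classical fact that $M$ preserves $A_p$. The reverse H\"older factor is subtler and will rely on the self-improvement $RH_s\subset RH_{s+\varepsilon}$ together with a weighted $L^s$-boundedness of $M$, allowing one to control the $RH_s$-averages of $W_t$ by those of $w$ uniformly in $t$.
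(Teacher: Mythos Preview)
Your strategy is correct and in fact more direct than the paper's, though the two share the same extrapolation skeleton: establish
\[
\|\mathcal{A}_r(\nabla L^{-1/2}F)\|_{L^r(w)}\lesssim\|\mathcal{A}_r(F)\|_{L^r(w)}
\qquad\text{for all }w\in A_{r/q_-(L)}\cap RH_{(q_+(L)/r)'},
\]
then invoke limited-range Rubio de Francia extrapolation (\cite[Theorem~3.31]{CruzMartellPerez}) to reach every $q\in(q_-(L),q_+(L))$. The difference lies in how the weighted estimate is obtained.

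The paper first proves a pointwise inequality (Corollary~\ref{cor:riesz}) by writing $\nabla L^{-1/2}=\nabla L^{-1/2}(I-e^{-t^2L})^M+\nabla L^{-1/2}A_{t,M}$; the first piece is controlled by off-diagonal estimates and change of angle in tent spaces, the second by $\mathcal{M}_{p_0}$ applied to $\nabla L^{-1/2}e^{-kt^2L/2}F(\cdot,t)$, which is then handled via the Auscher--Martell weighted bound together with $L^r$--$L^{q_0}$ off-diagonal estimates for $e^{-t^2L}$. This keeps to the pattern of Lemmas~\ref{lemma:H-Lmaximal}, \ref{lemma:C_Z}, \ref{lemma:rieszpotential}.

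Your route bypasses the pointwise lemma: Fubini turns the weighted $\mathcal{A}_r$-norm into $\int_0^\infty\|\nabla L^{-1/2}F(\cdot,t)\|_{L^r(W_t)}^r\,dt/t$, and you apply the weighted $L^r$ bound slicewise. The key point is that $W_t\in A_p\cap RH_s$ uniformly in $t$ whenever $w$ is. A cleaner justification than the one you sketch goes through the equivalence $w\in A_p\cap RH_s\Leftrightarrow w^s\in A_{s(p-1)+1}$: the $RH_s$ condition gives $(W_t)^s\sim(w^s)_t$ pointwise, and single-scale averages of an $A_q$ weight remain in $A_q$ with uniform constant (Jensen on the dual side plus Fubini for large balls, near-constancy of $W_t$ on small balls). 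Your remark that ``$M$ preserves $A_p$'' is true --- since $w\in RH_{1+\delta}$ forces $Mw\sim(M(w^{1+\delta}))^{1/(1+\delta)}\in A_1$ by Coifman--Rochberg --- but is stronger than needed. You should also record that the constant in the Auscher--Martell bound depends only on $[w]_{A_p\cap RH_s}$, which is the case.

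What each approach buys: yours is shorter and applies verbatim to \emph{any} operator bounded on $L^r(w)$ for $w\in\mathcal{W}_r$, with no operator-specific decomposition; the paper's yields a pointwise inequality of independent interest and stays within the unified template used for $\mathcal{M}$, Calder\'on--Zygmund operators and $\mathcal{I}_\alpha$.
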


Here is an interesting corollary. 

\begin{corollary}\label{co:maximal}
Assume $(T_{t})_{t>0}$ is a family of operators with $\sup_{t>0}|T_{t}(f)|\le (\mathcal{M}|f|^\rho)^{1/\rho}$ for some $\rho\ge 1$. For all $\rho<q,r<\infty$, 
\begin{align}\label{maximal:2}
\mathcal{T}:T^{q}_r\rightarrow T^q_r.
\end{align}
\end{corollary}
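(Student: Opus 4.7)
The plan is to reduce the problem to Theorem~\ref{thm:maximal}(a) applied to $\mathcal{M}$ at the shifted exponents $q/\rho$ and $r/\rho$. The hypothesis $\sup_{t>0}|T_t(f)| \le (\mathcal{M}|f|^\rho)^{1/\rho}$ gives, for each $t>0$ and a.e.\ $x\in\R^n$,
$$
|\mathcal{T}(F)(x,t)|^\rho = |T_t(F(\cdot,t))(x)|^\rho \le \mathcal{M}(|F(\cdot,t)|^\rho)(x),
$$
which, viewing $\mathcal{M}$ tensored with the identity in $t$, is just the pointwise bound
$$
|\mathcal{T}(F)(x,t)|^\rho \le \mathcal{M}(|F|^\rho)(x,t).
$$

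First I would raise this pointwise inequality to the power $r/\rho \ge 1$ (recall $r>\rho$), integrate against $\frac{dy\, dt}{t^{n+1}}$ over $\{(y,t): y\in B(x,t)\}$, and take an $r$-th root. This yields
$$
\mathcal{A}_r(\mathcal{T}(F))(x) \le \bigl(\mathcal{A}_{r/\rho}(\mathcal{M}(|F|^\rho))(x)\bigr)^{1/\rho}.
$$
Next I would raise this to the power $q$ and integrate in $x$, which produces
$$
\|\mathcal{A}_r(\mathcal{T}(F))\|_{L^q(\R^n)}^q \le \|\mathcal{A}_{r/\rho}(\mathcal{M}(|F|^\rho))\|_{L^{q/\rho}(\R^n)}^{q/\rho}.
$$

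Now the exponents $q/\rho$ and $r/\rho$ both lie in $(1,\infty)$ by hypothesis, so Theorem~\ref{thm:maximal}(a) gives the boundedness of $\mathcal{M}$ on $T^{q/\rho}_{r/\rho}$. Applying it to the function $|F|^\rho$ yields
$$
\|\mathcal{A}_{r/\rho}(\mathcal{M}(|F|^\rho))\|_{L^{q/\rho}(\R^n)} \lesssim \|\mathcal{A}_{r/\rho}(|F|^\rho)\|_{L^{q/\rho}(\R^n)}.
$$
Finally I would unwind the last expression by the identity $\mathcal{A}_{r/\rho}(|F|^\rho)(x) = \mathcal{A}_r(F)(x)^\rho$, which is immediate from the definition of $\mathcal{A}_s$, giving
$$
\|\mathcal{A}_{r/\rho}(|F|^\rho)\|_{L^{q/\rho}(\R^n)}^{q/\rho} = \|\mathcal{A}_r(F)\|_{L^q(\R^n)}^{q}.
$$
Combining the three displayed inequalities delivers $\|\mathcal{A}_r(\mathcal{T}(F))\|_{L^q} \lesssim \|\mathcal{A}_r(F)\|_{L^q}$, which is exactly \eqref{maximal:2}.

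There is no serious obstacle here: once the pointwise domination is in place, the proof is a mechanical reindexing ($q \mapsto q/\rho$, $r\mapsto r/\rho$) so as to land in the range where Theorem~\ref{thm:maximal}(a) applies. The only care to take is to check that the exponents $q/\rho$ and $r/\rho$ are strictly greater than $1$ (which is guaranteed by the assumption $\rho<q,r<\infty$) and that the pointwise identification $\mathcal{A}_{r/\rho}(|F|^\rho) = \mathcal{A}_r(F)^\rho$ is used consistently.
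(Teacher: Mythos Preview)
Your proof is correct and is precisely the argument the paper has in mind: the pointwise bound $|\mathcal{T}(F)|^\rho \le \mathcal{M}(|F|^\rho)$ yields $\mathcal{A}_r(\mathcal{T}(F)) \le \bigl(\mathcal{A}_{r/\rho}(\mathcal{M}(|F|^\rho))\bigr)^{1/\rho}$, and then one invokes Theorem~\ref{thm:maximal}(a) at the exponents $q/\rho, r/\rho \in (1,\infty)$. The paper does not spell this out for Corollary~\ref{co:maximal} but gives the identical reasoning (with an extra aperture $\alpha$) immediately afterward for Corollary~\ref{co:maximal2}.
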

  
This applies to the heat semigroup $e^{t^2\Delta}$ or the Poisson semigroup $e^{-t\sqrt {-\Delta}}$. 
Note that, in both cases, there is enough decay. Often, the sup norm is too strong an hypothesis. Here is a weaker one, applying for example to semigroups $e^{-t^2L}$ associated to elliptic operators such as the ones in Section \ref{sec:RT}.

\begin{corollary}\label{co:maximal2}
Assume $(T_{t})_{t>0}$ is a family of operators with a kind of reverse H\"older estimate
$$\left(\dashint_{B(x,t)}|T_{t}(f)(y)|^s dy\right)^{\frac{1}{s}}\le\left(\dashint_{B(x,\alpha t)}|\mathcal{M} (|f|^\rho)(y)|\ dy\right)^{\frac{1}{\rho}} $$ for some $\alpha>1$ and  some $1 \le \rho < s$, uniformly for all $(x,t)\in \R^{n+1}_{+}$. Then, for all $(r,q)$ with $\rho<r \le s$ and $\rho<q<\infty,$
\begin{align}\label{maximal:3}
\mathcal{T}:T^{q}_r\rightarrow T^q_r.
\end{align}
\end{corollary}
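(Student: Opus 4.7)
The plan is to pointwise dominate $\mathcal{A}_{r}(\mathcal{T}(F))$ by a rescaled maximal-like quantity and then reduce the $T^{q}_{r}$ estimate to the $T^{q/\rho}_{r/\rho}$ boundedness of $\mathcal{M}$ supplied by Theorem~\ref{thm:maximal}(a). The conditions $\rho<r\le s$ and $\rho<q$ turn out to be exactly what is needed to legitimize the necessary exponent manipulations.

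First I would extract a pointwise bound. Since $r\le s$, Jensen's inequality in $B(x,t)$ combined with the reverse H\"older hypothesis gives, after raising to the $r$-th power,
\begin{equation*}
\dashint_{B(x,t)}|T_{t}(f)(y)|^{r}\,dy\le\left(\dashint_{B(x,\alpha t)}\mathcal{M}(|f|^\rho)(y)\,dy\right)^{r/\rho}.
\end{equation*}
A second use of Jensen, now with the convex function $u\mapsto u^{r/\rho}$ (legitimate because $r>\rho$), replaces the right-hand side by $\dashint_{B(x,\alpha t)}\mathcal{M}(|f|^\rho)(y)^{r/\rho}\,dy$. Specializing to $f=F(\cdot,t)$, multiplying by $dt/t$, and integrating in $t$ turns this into
\begin{equation*}
\mathcal{A}_{r}(\mathcal{T}(F))(x)\lesssim\mathcal{A}_{r}^{(\alpha)}(\widetilde{G})(x),\qquad \widetilde{G}(y,t):=\mathcal{M}(|F(\cdot,t)|^\rho)(y)^{1/\rho}.
\end{equation*}

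Next I would pass to $L^{q}$ norms. The change of angle inequality recalled in the introduction removes the dilation $\alpha$, and the elementary identity $\mathcal{A}_{r}(H^{1/\rho})^{\rho}=\mathcal{A}_{r/\rho}(H)$, valid pointwise for $H\ge 0$, yields both $\|\mathcal{A}_{r}(\widetilde{G})\|_{L^{q}}^{\rho}=\|\mathcal{A}_{r/\rho}(\mathcal{M}(|F|^\rho))\|_{L^{q/\rho}}$ and $\|\mathcal{A}_{r}(F)\|_{L^{q}}^{\rho}=\|\mathcal{A}_{r/\rho}(|F|^\rho)\|_{L^{q/\rho}}$. Since $r/\rho,\,q/\rho\in(1,\infty)$, Theorem~\ref{thm:maximal}(a) applied (in the $\mathcal{M}\otimes I$ sense) on $T^{q/\rho}_{r/\rho}$ bounds the first right-hand side by the second, and taking $\rho$-th roots closes the chain.

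The main, and essentially only, obstacle is keeping track of the exponents through the two applications of Jensen and the rescaling: the first Jensen needs $r\le s$, the second needs $r>\rho$, and the rescaled maximal theorem requires both $r/\rho>1$ and $q/\rho>1$, which is precisely the content of the strict inequalities in the hypothesis.
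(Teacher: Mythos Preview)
Your argument is correct and is precisely the one the paper sketches: it records the same pointwise inequality $\mathcal{A}_r(\mathcal{T}(F))(x)\le\bigl(\mathcal{A}^{(\alpha)}_{r/\rho}(\mathcal{M}(|F|^\rho))(x)\bigr)^{1/\rho}$ and then invokes Theorem~\ref{thm:maximal}. You have simply made explicit the two Jensen steps and the change of angle that the paper leaves to the reader.
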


This follows from the pointwise inequality
$$
\mathcal{A}_r(\mathcal{T}(F))(x) \le \left(\mathcal{A}^{(\alpha)}_{\frac{r}{\rho}}(\mathcal{M}(|F|^\rho))(x)\right)^{\frac{1}{\rho}}
$$
 with $\mathcal{T}(F)(x,t)= T_{t}(F(\cdot, t))(x)$, and Theorem \ref{thm:maximal}.

\section{Weights}

Since we are going to use some weight theory, let us recall  definitions and some properties. We say that a  function, $w$, is a  weight if  $w\in L^1_{loc}$ and $w(x)>0$ for a.e. $x\in \R^n$. For $1<p<\infty$ if $B$ represents a ball in $\R^n$ we say that $w\in A_p$ if
$$
\left(\dashint_Bw(x)dx\right)\left(\dashint_Bw(x)^{1-p'}dx\right)^{p-1}\leq C, \,\textrm{for all } B\subset \R^n.
$$
For $p=1$, $w\in A_1$ if 
$$
\dashint_Bw(y)dy\leq C w(x), \,\textrm{for }a.e.\,x\in B\,\textrm{and for all }B\subset\R^n.
$$
We introduce also the reverse H\"older classes. For $1<q<\infty$ we say that $w\in RH_{q}$ if 
$$
\left(\dashint_Bw(x)^{q}dx\right)^{\frac{1}{q}}\leq C\dashint_Bw(x)dx, \,\textrm{for all } B\subset \R^n.
$$
And for $q=\infty$, $w\in RH_{\infty}$ if
$$
w(x)\leq C \dashint_Bw(y)dy, \,\textrm{for }a.e.\,x\in B\,\textrm{and for all }B\subset\R^n.
$$

We sum up some of the properties of these classes in the following result, see for instance \cite{GCRF85},
\cite{Duo}, or \cite{Grafakos}.

\begin{proposition}\label{prop:weights}\
\begin{enumerate}
\renewcommand{\theenumi}{\roman{enumi}}
\renewcommand{\labelenumi}{$(\theenumi)$}
\addtolength{\itemsep}{0.2cm}

\item $A_1\subset A_p\subset A_q$ for $1\le p\le q<\infty$.

\item $RH_{\infty}\subset RH_q\subset RH_p$ for $1<p\le q\le \infty$.

\item If $w\in A_p$, $1<p<\infty$, then there exists $1<q<p$ such
that $w\in A_q$.

\item If $w\in RH_s$, $1<s<\infty$, then there exists $s<r<\infty$ such
that $w\in RH_r$.

\item $\displaystyle A_\infty=\bigcup_{1\le p<\infty} A_p=\bigcup_{1<s\le
\infty} RH_s$.

\item If $1<p<\infty$, $w\in A_p$ if and only if $w^{1-p'}\in
A_{p'}$.

\end{enumerate}
\end{proposition}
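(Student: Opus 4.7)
These six properties are all classical results in the theory of Muckenhoupt weights, so the plan is mostly to sketch the standard arguments rather than introduce new machinery. Items (i) and (ii) are immediate nesting statements. For (i), one applies Hölder's inequality with exponents $(q-1)/(p-1)$ and its dual to the factor $\br{\dashint_B w^{1-p'}dx}^{p-1}$, which shows $A_p\subset A_q$; the case $p=1$ follows by inserting the pointwise bound $\dashint_B w\le C w(x)$ into the $A_q$ quotient. Item (ii) is Jensen's inequality applied to $w^p$ with the concave map $t\mapsto t^{q/p}$. Item (vi) is the symmetry computation: raising the $A_p$ condition to the power $1/(p-1)$ yields the $A_{p'}$ condition for $w^{1-p'}$, and the map $w\mapsto w^{1-p'}$ is an involution between $A_p$ and $A_{p'}$.

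The substance lies in the two self-improvement statements (iii) and (iv). For (iii), the plan is to show that every $A_p$ weight satisfies a reverse Hölder inequality of some order $1+\varepsilon$, with $\varepsilon>0$ depending only on $p$ and $[w]_{A_p}$. The classical argument runs a Calderón-Zygmund stopping time on the level sets of $w$ over a fixed ball to obtain a good-$\lambda$ type distributional inequality, then integrates to bootstrap the exponent. The reverse Hölder inequality in turn lets one lower the exponent in the $A_p$ condition, producing $A_q$ membership for some $q<p$. For (iv), one invokes Gehring's lemma: a weight in $RH_s$ automatically lies in $RH_{s+\varepsilon}$ for some $\varepsilon>0$; the proof is a parallel Calderón-Zygmund decomposition at level $\lambda$ applied to $w^s$.

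Item (v) is then a synthesis of the previous parts. The inclusion $\bigcup_{p} A_p\subset \bigcup_{s} RH_s$ is precisely (iii) reread: the reverse Hölder inequality extracted from $A_p$ membership is an $RH_{1+\varepsilon}$ condition. For the converse inclusion, one shows that any $w\in RH_s$ is doubling and satisfies an $A_\infty$-type condition of the form $w(E)/w(B)\lesssim\br{|E|/|B|}^{\theta}$ for every measurable $E\subset B$, which by the standard equivalent characterizations of $A_\infty$ places $w$ in some $A_p$.

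The main obstacle, if anything, is the self-improvement in (iii) and (iv): both rest on nontrivial Calderón-Zygmund stopping time arguments and are the only steps that are not routine inequality manipulations. Since detailed proofs are given in \cite{GCRF85}, \cite{Duo}, and \cite{Grafakos}, the intended plan is to cite these sources and refrain from reproducing the arguments here.
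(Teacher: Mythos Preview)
Your proposal is correct, and in fact goes further than the paper itself: the paper gives no proof of this proposition at all, simply stating the result and referring the reader to \cite{GCRF85}, \cite{Duo}, and \cite{Grafakos}. Your plan to cite those same sources matches the paper's treatment exactly, and the sketches you include are accurate and standard.
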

\section{Hardy Littlewood maximal operator}
The centered Hardy-Littlewood maximal operator is defined for locally integrable $f$ by
\begin{align*}
\mathcal{M}(f)(x)=\sup_{\tau>0}\dashint_{B(x,\tau)}|f(y)|\, dy.
\end{align*}
For this operator we use the following pointwise inequality.
\begin{lemma}\label{lemma:H-Lmaximal}
For all $x\in \R^n$, $t>0$, and $1<r<\infty$, and all $f$ locally $r$ integrable,  we have that 
\begin{align}\label{pointwisemaximal}
\left(\dashint_{B(x,t)}|\mathcal{M}(f)(y)|^r\, dy\right)^{\frac{1}{r}}
\lesssim
\left(\dashint_{B(x,2t)}|f(y)|^r\, dy\right)^{\frac{1}{r}}
+
\mathcal{M}_u\left(\dashint_{B(\cdot,t)}|f(z)|dz\right)(x),
\end{align}
where $\mathcal{M}_u$ is the uncentered maximal operator.
\end{lemma}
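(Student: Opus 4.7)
The natural plan is a local/global truncation. I will split $f=f_1+f_2$ with $f_1:=f\chi_{B(x,2t)}$ and $f_2:=f-f_1$, then use the sublinearity of $\mathcal{M}$ and the triangle inequality in $L^r\bigl(B(x,t),dy/|B(x,t)|\bigr)$ to reduce to bounding the $L^r$-averages of $\mathcal{M}(f_1)$ and $\mathcal{M}(f_2)$ separately over $B(x,t)$. The local piece $\mathcal{M}(f_1)$ is immediately controlled by the strong $(r,r)$ boundedness of $\mathcal{M}$ (valid for $1<r<\infty$), which together with $|B(x,2t)|\simeq|B(x,t)|$ produces the first summand on the right-hand side of \eqref{pointwisemaximal}.

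The substance lies in the global piece, for which I will establish the pointwise bound
\[
\mathcal{M}(f_2)(y)\;\lesssim\;\mathcal{M}_u\!\left(\dashint_{B(\cdot,t)}|f(z)|\,dz\right)(x),\qquad y\in B(x,t),
\]
which delivers the second summand after taking $L^r$-averages in $y$. Fixing $y\in B(x,t)$, the first remark is a geometric one: any ball $B(y,\tau)$ that meets $\mathrm{supp}(f_2)\subset B(x,2t)^c$ must satisfy $\tau>t$, since otherwise $B(y,\tau)\subset B(x,2t)$. For such $\tau$, the inclusion $B(y,\tau)\subset B(x,\tau+t)$ and the comparison $\tau+t\le 2\tau$ yield
\[
\dashint_{B(y,\tau)}|f_2|\,dz\;\lesssim\;\dashint_{B(x,\tau+t)}|f|\,dz.
\]

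The one step I expect to require actual work is relating an average of $|f|$ over $B(x,R)$ (with $R\ge t$) to an average of the smoothed function $g(w):=\dashint_{B(w,t)}|f(z)|\,dz$ over a slightly dilated ball. A direct Fubini computation exchanging the two $z,w$ integrations gives
\[
\dashint_{B(x,R+t)}g(w)\,dw\;\ge\;\Bigl(\tfrac{R}{R+t}\Bigr)^n\dashint_{B(x,R)}|f(z)|\,dz,
\]
because $B(z,t)\subset B(x,R+t)$ whenever $z\in B(x,R)$. For $R\ge t$ the prefactor is bounded below by $2^{-n}$, and since $x\in B(x,R+t)$ the left-hand side is dominated by $\mathcal{M}_u(g)(x)$. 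Applying this with $R=\tau+t$ and chaining with the previous inequality closes the pointwise claim.

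The main (and only) obstacle is thus the Fubini comparison converting $g$-averages back into $f$-averages on a thickened ball; everything else reduces to the standard local/global split and routine ball inclusions, and the $\mathcal{M}_u$ on the right-hand side (rather than the centered $\mathcal{M}$) is needed precisely because the enveloping ball $B(x,R+t)$ is centered at $x$ but has radius much larger than $t$.
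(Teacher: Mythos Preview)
Your argument is correct and follows essentially the same local/global strategy as the paper, with two organizational differences worth noting. The paper splits the \emph{supremum} defining $\mathcal{M}(f)(y)$ into $0<\tau\le t$ and $\tau>t$ rather than splitting $f$; the small-$\tau$ part then coincides with your treatment of $\mathcal{M}(f_1)$, and for the large-$\tau$ part the paper runs the Fubini smoothing in the opposite direction (inserting $1=\dashint_{B(z,t)}d\xi$ and absorbing $\xi$ into the ball $B(y,2\tau)$), which produces enveloping balls centered at $y$ rather than at $x$. Because $x\in B(y,2\tau)$ but $y\ne x$, the paper genuinely needs the uncentered $\mathcal{M}_u$ at that step. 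In your version, by contrast, the ball $B(x,R+t)$ is centered at $x$, so the \emph{centered} maximal function of $g(\cdot)=\dashint_{B(\cdot,t)}|f|$ already suffices---your final sentence about $\mathcal{M}_u$ being needed is therefore not quite right; you have in fact proved a marginally sharper inequality, and the stated bound with $\mathcal{M}_u$ follows since $\mathcal{M}\le\mathcal{M}_u$.
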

\begin{proof}
Fix $x\in \R^n$ and $t>0$, and split the  supremum into $0<\tau\leq t$ and $t<\tau$. Then,
\begin{multline*}
\left(\dashint_{B(x,t)}|\mathcal{M}(f)(y)|^r\, dy\right)^{\frac{1}{r}}
\leq 
\left(\dashint_{B(x,t)}\left(\sup_{0<\tau\leq t}\dashint_{B(y,\tau)}|f(z)|dz\right)^r\, dy\right)^{\frac{1}{r}}
\\
+
\left(\dashint_{B(x,t)}\left(\sup_{\tau> t}\dashint_{B(y,\tau)}|f(z)|dz\right)^r\, dy\right)^{\frac{1}{r}}=:I+II.
\end{multline*}
Now, since, for $0<\tau\leq t$ and $y\in B(x,t)$ it happens that $B(y,\tau)\subset B(x,2t)$,
\begin{multline*}
I
\leq 
\left(\dashint_{B(x,t)}\left(\sup_{0<\tau\leq t}\dashint_{B(y,\tau)}|f(z)\chi_{B(x,2t)}(z)|dz\right)^r\, dy\right)^{\frac{1}{r}}
\\
\leq 
\left(\dashint_{B(x,t)}|\mathcal{M}(f\chi_{B(x,2t)})(y)|^r\, dy\right)^{\frac{1}{r}}
\lesssim
\left(\dashint_{B(x,2t)}|f(y)|^r\, dy\right)^{\frac{1}{r}},
\end{multline*}
where in the last inequality we have used that $\mathcal{M}:L^r(\R^n)\rightarrow L^r(\R^n)$ (\cite[Theorem 2.5]{Duo}).

As for $II$, note that, for $\xi, z\in \R^n$,  $\xi\in B(z,t)\Leftrightarrow z\in B(\xi,t)$, and also that if $z\in B(y,\tau)$, $\xi\in B(z,t)$, and $\tau>t$, then $\xi\in B(y,2\tau)$. Besides, observe that the fact that $x\in B(y,t)$ and $\tau>t$ implies that $x\in B(y,2\tau)$. Hence, applying Fubini's theorem,
\begin{multline*}
II
=
\left(\dashint_{B(x,t)}\left(\sup_{\tau> t}\dashint_{B(y,\tau)}|f(y)|\dashint_{B(z,t)}d\xi\,dz\right)^r\, dy\right)^{\frac{1}{r}}
\\
\leq 
\left(\dashint_{B(x,t)}\left(\sup_{\tau> t}\dashint_{B(y,2\tau)}\dashint_{B(\xi,t)}|f(z)|dz\,d\xi\right)^r\, dy\right)^{\frac{1}{r}}
\\
\lesssim
\mathcal{M}_u\left(\dashint_{B(\cdot,t)}|f(z)|dz\right)(x).
\end{multline*}
Gathering the estimates obtained for $I$ and $II$, we conclude 
\eqref{pointwisemaximal}.
\end{proof}

\subsection{Proof of Theorem \ref{thm:maximal}, part $(a)$}
Let $w$ be a Muckenhoupt weight.
We shall prove for all $w\in A_r$ that  for all $F\in T^r_{r}$ (hence $F(\cdot, t)$ is locally $r$ integrable for almost every $t>0$)
\begin{align}\label{maximal:tentspace2}
\int_{\R^n}|\mathcal{A}_r(\mathcal{M}(F))(x)|^rw(x)dx
\leq 
C \int_{\R^n}|\mathcal{A}_r(F)(x)|^rw(x)dx.
\end{align}
From this,  by \cite[Theorem 3.9]{CruzMartellPerez}, we have that, for all $1<q<\infty$, $F\in T^r_{r}$ and $w_0\in A_q$,
\begin{align*}
\int_{\R^n}|\mathcal{A}_r(\mathcal{M}(F))(x)|^qw_0(x)dx
\leq 
C \int_{\R^n}|\mathcal{A}_r(F)(x)|^qw_0(x)dx.
\end{align*}
In particular, for $w_0\equiv 1$, we have that $w_0\in A_q$ for all $1<q<\infty$, then, for all $F\in T^r_{r}$,
\begin{align}\label{maximaldensity}
\|\mathcal{M}F\|_{T^q_r}=\left(\int_{\R^n}|\mathcal{A}_r(\mathcal{M}(F))(x)|^qdx\right)^{\frac{1}{q}}
\leq 
C \left(\int_{\R^n}|\mathcal{A}_r(F)(x)|^qdx\right)^{\frac{1}{q}}=
C\|F\|_{T^q_r}.
\end{align}
Approximating $ T^q_{r}$ functions by compactly supported $T^r_{r}$ functions,  we conclude \eqref{maximaldensity} holds for functions $F\in T^q_r$ by the monotone convergence theorem.

Therefore, to finish the proof it just remains to show \eqref{maximal:tentspace2}. This follows by \eqref{pointwisemaximal}  applied to $f=F(\cdot, t)$
and the fact that $\mathcal{M}_u:L^r(w)\rightarrow L^r(w)$, for all $w\in A_r$ (\cite{Muckenhoupt}). Using these two facts and \cite[Proposition 3.2]{MartellPrisuelos}, for all $w\in A_r$,
\begin{multline*}
\int_{\R^n}|\mathcal{A}_r(\mathcal{M}(F))(x)|^rw(x)dx
=
\int_{\R^n}\int_0^{\infty}\dashint_{B(x,t)}|\mathcal{M}(F(\cdot,t))(y)|^r\frac{dy\,dt}{t}\,w(x)dx
\\
\lesssim
\int_{\R^n}\int_0^{\infty}\dashint_{B(x,2t)}|F(y,t)|^r\frac{dy\,dt}{t}\,w(x)dx
+
\int_{\R^n}\int_0^{\infty}\left|\mathcal{M}_{u}\left(\dashint_{B(\cdot,t)}|F(y,t)|dy\right)(x)\right|^r\frac{dt}{t}\,w(x)dx
\\
\lesssim
\int_{\R^n}|\mathcal{A}_r(F)(x)|^r\,w(x)dx
+
\int_0^{\infty}\int_{\R^n}\left|\mathcal{M}_{u}\left(\dashint_{B(\cdot,t)}|F(y,t)|dy\right)(x)\right|^rw(x)dx\frac{dt}{t}\,
\\
\lesssim
\int_{\R^n}|\mathcal{A}_r(F)(x)|^r\,w(x)dx
+
\int_0^{\infty}\int_{\R^n}\left(\dashint_{B(x,t)}|F(y,t)|dy\right)^rw(x)dx\frac{dt}{t}\,
\\
\lesssim
\int_{\R^n}|\mathcal{A}_r(F)(x)|^r\,w(x)dx.
\end{multline*}
\qed
\subsection{Proof of Theorem \ref{thm:maximal}, part $(b)$}
By \eqref{pointwisemaximal} and the change of angle in tent spaces, for all $\lambda>0$, we have that
\begin{align*}
\lambda|\{x\in \R^n:\mathcal{A}_r(\mathcal{M}(F))(x)>\lambda\}|\lesssim \|F\|_{T^1_r}+\lambda\left|\left\{x\in \R^n:\mathcal{V}_r(\mathcal{M}_u(\widetilde{F}))(x)>\frac{\lambda}{2}\right\}\right|,
\end{align*}
where $\widetilde{F}(x,t):=\dashint_{B(x,t)}|F(z,t)|\,dz$. Then, applying  the Fefferman-Stein vector-valued weak type $(1,1)$ inequality \cite{FeffermanStein}, we control the second term in the above sum  by
$$
C\int_{\R^n}\left(\int_{0}^{\infty}|\widetilde{F}(x,t)|^r\frac{dt}{t}\right)^{\frac{1}{r}}dx\lesssim \|F\|_{T^1_r},
$$
for some constant $C>0$. Therefore, taking the supremum over all $\lambda>0$, conclude that
$$
\|\mathcal{M}F\|_{wT^1_r}\lesssim \|F\|_{T^1_r}.
$$
\qed
\section{Calder\'on-Zygmund operators}\label{section:C-Z}
Recall that $\mathcal{T}$ is a Calder\'on-Zygmund operator of order $\delta \in (0,1]$ if 
$\mathcal{T}$ is bounded on $L^2(\R^n)$ and has a kernel representation  
\begin{align*}
\mathcal{T}(f)(x)=\int_{\R^n}K(x,y)f(y)\,dy,
\end{align*}
for almost every $x$ not in the support of $f\in L^2(\R^n)$,
with the kernel, $K$, satisfying the standard conditions: for some $\delta>0$,
\begin{align}\label{kernel1}
|K(x,y)|\leq \frac{C}{|x-y|^n}, \textrm{ for } x\ne y;
\end{align}
\begin{align}\label{kernel2}
|K(x,y)-K(x,z)|\leq C\frac{|y-z|^{\delta}}{|x-y|^{n+\delta}},
 \textrm{ for } |x-y|>2|y-z|;
\end{align}
\begin{align}\label{kernel3}
|K(x,y)-K(w,y)|\leq C \frac{|x-w|^{\delta}}{|x-y|^{n+\delta}}, \textrm{ for } |x-y|>2|x-w|.
\end{align}
Classically, $\mathcal{T}$ extends to a bounded operator on $L^r(\R^n)$ for $1<r<\infty$  (see for instance \cite[Theorem 5.10]{Duo}) and the kernel representation holds also when $f\in L^r(\R^n)$.
The following lemma gives us a useful pointwise inequality for Calder\'on-Zygmund operators. 
\begin{lemma}\label{lemma:C_Z}
Let $\mathcal{T}$ be a Calder\'on-Zygmund operator and   $f\in L^r(\R^n)$. We have, for $1<r<\infty$, and for each $x\in \R^n$ and all $t>0$,
\begin{align}\label{C-Z:pointwise}
\left(\dashint_{B(x,t)}|\mathcal{T}(f)(y)|^rdy\right)^{\frac{1}{r}}
\lesssim 
\left(\dashint_{B(x,2t)}|f(y)|^rdy\right)^{\frac{1}{r}}
+
\mathcal{T}_{*}(f)(x)
+\mathcal{M}(f)(x),
\end{align}
where $\mathcal{T}_{*}(f)(x):=\sup_{\varepsilon>0}\left|\int_{|x-y|>\varepsilon}K(x,y)f(y)dy\right|.$
\end{lemma}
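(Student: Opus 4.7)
Fix $x\in\R^n$ and $t>0$. The plan is the standard Calder\'on-Zygmund splitting $f=f_{1}+f_{2}$, where
$$f_{1}:= f\,\chi_{B(x,2t)}, \qquad f_{2}:= f\,\chi_{\R^n\setminus B(x,2t)}.$$
By the triangle inequality it suffices to bound the $L^r$-average of $\mathcal{T}(f_{1})$ and of $\mathcal{T}(f_{2})$ on $B(x,t)$ separately.

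For the local piece $f_{1}$, I would invoke the classical $L^r(\R^n)$-boundedness of $\mathcal{T}$ for $1<r<\infty$ (recalled just before the statement):
$$\left(\dashint_{B(x,t)}|\mathcal{T}(f_{1})(y)|^{r}\,dy\right)^{\frac{1}{r}}\lesssim \frac{1}{|B(x,t)|^{1/r}}\,\|f_{1}\|_{L^{r}(\R^n)}\lesssim \left(\dashint_{B(x,2t)}|f(y)|^{r}\,dy\right)^{\frac{1}{r}}.$$

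For the far piece, the goal is the pointwise estimate
$$|\mathcal{T}(f_{2})(y)|\lesssim \mathcal{T}_{*}(f)(x)+\mathcal{M}(f)(x), \qquad y\in B(x,t),$$
which, once established, makes the $L^r$-average on $B(x,t)$ trivial. The idea is to compare $\mathcal{T}(f_{2})(y)$ with $\mathcal{T}(f_{2})(x)$. Since $x\notin \supp(f_{2})$, the kernel representation gives
$$\mathcal{T}(f_{2})(x)=\int_{|x-z|>2t}K(x,z)f(z)\,dz,$$
which is dominated by $\mathcal{T}_{*}(f)(x)$ by definition of the maximal singular integral (take $\varepsilon=2t$). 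It remains to bound $|\mathcal{T}(f_{2})(y)-\mathcal{T}(f_{2})(x)|$ by $\mathcal{M}(f)(x)$.

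This is where the smoothness condition \eqref{kernel3} enters: for $y\in B(x,t)$ and $z$ in the support of $f_{2}$ one has $|x-z|\geq 2t\geq 2|x-y|$, so
$$|K(y,z)-K(x,z)|\leq C\frac{|x-y|^{\delta}}{|x-z|^{n+\delta}}\leq C\frac{t^{\delta}}{|x-z|^{n+\delta}}.$$
Integrating against $|f|$ and splitting the complement dyadically, $\{|x-z|>2t\}=\bigcup_{j\geq 1}\{2^{j}t<|x-z|\leq 2^{j+1}t\}$, the $j$-th annulus contributes at most $C\,2^{-j\delta}\mathcal{M}(f)(x)$, and summing in $j$ yields the desired bound. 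I do not anticipate any serious obstacle: everything reduces to the standard local/nonlocal Calder\'on-Zygmund decomposition and a geometric series, entirely parallel to Lemma \ref{lemma:H-Lmaximal}; the only subtlety is taking the truncation radius in the definition of $\mathcal{T}_{*}$ to be exactly $2t$ so that the ``centered'' far piece $\mathcal{T}(f_{2})(x)$ is absorbed into $\mathcal{T}_{*}(f)(x)$ without producing spurious terms.
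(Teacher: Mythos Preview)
Your proposal is correct and follows essentially the same approach as the paper: both split $f=f\chi_{B(x,2t)}+f\chi_{\R^n\setminus B(x,2t)}$, handle the local piece via the $L^r$-boundedness of $\mathcal{T}$, and for the global piece add and subtract $K(x,z)$ so that the smoothness condition \eqref{kernel3} together with a dyadic decomposition yields the $\mathcal{M}(f)(x)$ term while the truncated integral at level $2t$ gives $\mathcal{T}_{*}(f)(x)$. The only difference is presentational---you phrase the global estimate as comparing $\mathcal{T}(f_2)(y)$ with $\mathcal{T}(f_2)(x)$, which is exactly what the paper's manipulation $K(y,z)=(K(y,z)-K(x,z))+K(x,z)$ amounts to.
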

\begin{proof}
Fix $x\in \R^n$ and $t>0$, consider the ball $B(x,2t)$ and write  $f=f\chi_{B(x,2t)}+f\chi_{\R^n\setminus B(x,2t)}=:f_{loc}+f_{glob}$. Then
\begin{equation*}
 \left(\dashint_{B(x,t)}|\mathcal{T}(f)(y)|^rdy\right)^{\frac{1}{r}}
 \leq
  \left(\dashint_{B(x,t)}|\mathcal{T}(f_{loc})(y)|^rdy\right)^{\frac{1}{r}}
 +
 \left(\dashint_{B(x,t)}|\mathcal{T}(f_{glob})(y)|^rdy\right)^{\frac{1}{r}}
 =:I+II.
\end{equation*}
Since  $\mathcal{T}:L^r(\R^n)\rightarrow L^r(\R^n)$, 
$$
I\lesssim \left(\dashint_{B(x,2t)}|f(y)|^rdy\right)^{\frac{1}{r}}.
$$
As for $II$,  apply the fact that, for $y\in B(x,t)$, $\{z:|x-z|>2t\}\subset\{z:
|x-z|>2|x-y|\}$ and  \eqref{kernel3}. Then,
\begin{align*}
II&
=
\left(\dashint_{B(x,t)}\left|\int_{\R^n}K(y,z)f_{glob}(z)\,dz\right|^r\,dy\right)^{\frac{1}{r}}
=
\left(\dashint_{B(x,t)}\left|\int_{|x-z|>2t}K(y,z)f(z)\,dz\right|^r\,dy\right)^{\frac{1}{r}}
\\
&
= 
\left(\dashint_{B(x,t)}\left|\int_{|x-z|>2t}(K(y,z)-K(x,z)+K(x,z))f(z)\,dz\right|^rdy\right)^{\frac{1}{r}}
\\
&\leq
 \left(\dashint_{B(x,t)}\left(\int_{|x-z|>2t}|K(y,z)-K(x,z)|\, |f(z)|\,dz\right)^rdy\right)^{\frac{1}{r}}
\\
& \quad 
 +
 \left(\dashint_{B(x,t)}\left|\int_{|x-z|>2t}K(x,z)f(z)\,dz\right|^rdy\right)^{\frac{1}{r}}
 \\
&\lesssim
 \left(\dashint_{B(x,t)}\left(\int_{|x-z|>2t}\frac{|x-y|^{\delta}}{|x-z|^{n+\delta}}\, |f(z)|\,dz\right)^rdy\right)^{\frac{1}{r}}
 +
\left|\int_{|x-z|>2t}K(x,z)f(z)\,dz\right|
 \\
&\lesssim
 \left(\dashint_{B(x,t)}\left(\sum_{k=0}^{\infty}\int_{2^{k+1}t<|x-z|\leq 2^{k+2}t}\frac{|x-y|^{\delta}}{|x-z|^{n+\delta}}\, |f(z)|\,dz\right)^rdy\right)^{\frac{1}{r}}
+\left|\int_{|x-z|>2t}K(x,z)f(z)\,dz\right|
\\
&\lesssim \sum_{k=0}^{\infty}\frac{1}{2^{k\delta}}
 \left(\dashint_{B(x,t)}\left(\dashint_{B(x,2^{k+2}t)}|f(z)|\,dz\right)^rdy\right)^{\frac{1}{r}}
+\left|\int_{|x-z|>2t}K(x,z)f(z)\,dz\right|
\\
&\lesssim \sum_{k=0}^{\infty}\frac{1}{2^{k\delta}}
 \dashint_{B(x,2^{k+2}t)}|f(z)|\,dz
+\left|\int_{|x-z|>2t}K(x,z)f(z)\,dz\right|
\\
&\lesssim \mathcal{M}(f)(x)
+\mathcal{T}_*(f)(x).
\end{align*}
\end{proof}

\subsection{Proof of Theorem \ref{thm:C-Zoperator}, part $(a)$}
As we said above we first use \eqref{C-Z:pointwise} to prove a weighted version of the case $q=r$ for $\mathcal{T}$. We recall that we use the same notation $\mathcal{T}$ for its extension to tent spaces.

We consider $F\in T^r_{r}$   so that for almost every $t>0$, $F(\cdot, t)\in L^r(\R^n)$ and all calculations make sense. For a weight $w\in A_r\cap RH_{\infty}$, by \eqref{C-Z:pointwise}, Fubini, the fact that $\mathcal{T}^*, \mathcal{M}:L^r(w)\rightarrow L^r(w)$ (see for instance \cite{CoifmanFefferman}, \cite[Theorem 7.13]{Duo}), and applying \cite[Proof of Proposition 2.3]{AuscherHofmannMartell}, and \cite[Proposition 3.2]{MartellPrisuelos}, 
\begin{align*}
\Bigg(\int_{\R^n}\int_{0}^{\infty}\int_{B(x,t)}&|\mathcal{T}(F(\cdot,t))(y)|^r\frac{dy\,dt}{t^{n+1}}w(x)\,dx\Bigg)^{\frac{1}{r}}
\\
&
\lesssim
\Bigg(\int_{\R^n}\int_{0}^{\infty}\int_{B(x,2t)}|F(y,t)|^r\frac{dy\,dt}{t^{n+1}}w(x)\,dx\Bigg)^{\frac{1}{r}}
\\
&\quad+
\left(\int_{\R^n}\int_{0}^{\infty}|\mathcal{T}^*(F(\cdot,t))(x)|^r\frac{dt}{t}w(x)\,dx\right)^{\frac{1}{r}}
+
\left(\int_{\R^n}\int_{0}^{\infty}|\mathcal{M}(F(\cdot,t))(x)|^r\frac{dt}{t}w(x)\,dx\right)^{\frac{1}{r}}
\\
&\lesssim
\left(\int_{\R^n}\int_{0}^{\infty}\int_{B(x,t)}|F(y,t)|^r\frac{dy\,dt}{t^{n+1}}w(x)\,dx\right)^{\frac{1}{r}}
+
\left(\int_{\R^n}\int_{0}^{\infty}|F(x,t)|^r\frac{dt}{t}w(x)\,dx\right)^{\frac{1}{2}}
\\
&\lesssim
\left(\int_{\R^n}\int_{0}^{\infty}\int_{B(x,t)}|F(y,t)|^r\frac{dy\,dt}{t^{n+1}}w(x)\,dx\right)^{\frac{1}{r}}.
\end{align*}
Therefore, for all $w\in A_r\cap RH_{\infty}$ and $F\in T^r_{r}$, 
\begin{align}\label{extra:C-Z}
\int_{\R^n}\int_{0}^{\infty}\int_{B(x,t)}|\mathcal{T}(F(\cdot,t))(y)|^r\frac{dy\,dt}{t^{n+1}}w(x)\,dx
\lesssim
\int_{\R^n}\int_{0}^{\infty}\int_{B(x,t)}|F(y,t)|^r\frac{dy\,dt}{t^{n+1}}w(x)\,dx.
\end{align}
In particular for $w\equiv 1$ and $F$ as above, 
$$
\|\mathcal{T}(F)\|_{T_r^r}\lesssim \|F\|_{T^r_r},
$$
where the estimate does not depend on $F$. This proves the case $q=r$. 
Note now that in view of \eqref{extra:C-Z}, we can apply \cite[Theorem 3.31]{CruzMartellPerez}, for $p_-=1$ and $p_+=r$. Then, we obtain that, for all
$1<q<r$ and $w_0\in A_q\cap RH_{\left(\frac{r}{q}\right)'}$, and all $F\in T^r_{r}$, 
\begin{align*}
\int_{\R^n}\left(\int_{0}^{\infty}\int_{B(x,t)}|\mathcal{T}(F(\cdot,t))(y)|^r\frac{dy\,dt}{t^{n+1}}\right)^{\frac{q}{r}}w_0(x)\,dx
\lesssim
\int_{\R^n}\left(\int_{0}^{\infty}\int_{B(x,t)}|F(y,t)|^r\frac{dy\,dt}{t^{n+1}}\right)^{\frac{q}{r}}w_0(x)\,dx.
\end{align*}
Hence, taking $w_0\equiv 1$, we have in particular that $w_0\in A_q\cap RH_{\left(\frac{r}{q}\right)'}$. Then, for $1<q<r$ and all $F\in T^r_{r}$, 
$$
\|\mathcal{T}(F)\|_{T_r^q}\lesssim \|F\|_{T^q_r}.
$$
We conclude by density of $T^r_{r}\cap T^q_{r}$ into $T^q_{r}$. 

 In order to prove the boundedness for $1<r<q<\infty$, we use a duality argument. Take $F\in T_r^q\cap T^r_{r}$ and $G\in T^{q'}_{r'}\cap T^{r'}_{r'}$.  
By the previous argument and dualization we obtain, 
$$
\int_{\R^n}\int_0^{\infty}|F(y,t)\widetilde{\mathcal{T}}(G(\cdot,t))(y)|\frac{dt\,dy}{t}
\lesssim \|F\|_{T^q_r}\|G\|_{T^{q'}_{r'}},
$$
where $\widetilde{\mathcal{T}}$ is the adjoint of $\mathcal{T}$.
Also
$$
\int_{\R^n}\int_0^{\infty}\left|\mathcal{T}(F(\cdot,t))(x)G(x,t)\right|\frac{dt\,dx}{t} \lesssim  \|F\|_{T^r_r}\|G\|_{T^{r'}_{r'}} <\infty$$
 Thus,   Fubini's theorem 
and
$$
\int_{\R^n}\mathcal{T}(F(\cdot,t))(x)G(x,t)\, dx= \int_{\R^n}F(y,t)\widetilde{\mathcal{T}}(G(\cdot,t))(y)\,dy
$$
yield
$$
\left|\int_{\R^n}\int_0^{\infty}\mathcal{T}(F(\cdot,t))(x)G(x,t)\frac{dt\,dx}{t}\right|
=
\left|\int_{\R^n}\int_0^{\infty}F(y,t)\widetilde{\mathcal{T}}(G(\cdot,t))(y)\frac{dt\,dy}{t}\right|
\lesssim \|F\|_{T^q_r}\|G\|_{T^{q'}_{r'}}.
$$
Finally, taking the supremum over all $G$ as above, such that $\|G\|_{T^{q'}_{r'}}\leq 1$,  we conclude that, for all $F \in T^q_{r}\cap T^r_{r}$,
$
\|\mathcal{T}(F)\|_{T^q_r}\lesssim \|F\|_{T^q_r}.
$
By density, this allows to extend the action of $\mathcal{T}$ to all $F\in T^q_{r}$.
\qed

Remark that 
$$
\int_{\R^n}\int_0^{\infty}\left|\mathcal{T}(F(\cdot,t))(x)G(x,t)\right|\frac{dt\,dx}{t}<\infty
$$
 for all $F\in T^q_{r}$ and all $G\in T^{q'}_{{r'}}$ when $q=r$. But when $q\ne r$, the argument does not allow to conclude for the convergence of this integral for arbitrary  $F\in T^q_{r}$ and $G\in T^{q'}_{{r'}}$. Of course, this inequality holds for the extension of $\mathcal{T}$ on $T^q_{r}$.
\subsection{Proof of Theorem \ref{thm:C-Zoperator}, part $(b)$} Let $F\in T^r_{r} \cap T^1_{r}$, which is dense in $T^1_{r}$.  It follows from \eqref{C-Z:pointwise} that 
$$
\mathcal{A}_r(\mathcal{T}(F)) \lesssim \mathcal{A}_r^{(2)}(F) + \mathcal{V}_r(\mathcal{M}(F)) + \mathcal{V}_r(\mathcal{T}_{*}(F)).
$$
We need to estimate the $L^{1,\infty}(\R^n)$ norm of each term.

The first  term  has $L^1(\R^n)$ norm  controlled  by $c\|F\|_{T^1_r}$ for some constant $c>0$ by change of angle in tent spaces. 

For the second one, one applies Fefferman-Stein  vector-valued weak type $(1,1)$ inequality and then, the fact that the norm in $L^1(\R^n)$ of the vertical function $\mathcal{V}_{r}(F)$ is controlled by the norm in $L^1(\R^n)$ of the conical function $\mathcal{A}_r(F)$ (see \cite{AuscherHofmannMartell}).

For the third term, the needed weak type estimate is    
$$
\left|\left\{x\in \R^n:\mathcal{V}_r(\mathcal{T}_{*}(F))(x)>{\lambda}\right\}\right|\lesssim \frac{\|\mathcal{V}_r(F)\|_{L^1(\R^n)}}{\lambda}.
$$
It should be known but as we have not been able to locate a proof, we provide one for the reader's comfort. Once this is proved, we use again the  result in \cite{AuscherHofmannMartell} mentioned above. 

Fix $\lambda>0$ and consider the set
$$
\Omega_{\lambda}:=\{x\in \R^n:\mathcal{M}_u(\mathcal{V}_r(F))(x)>\lambda\},
$$ 
where we  recall that $\mathcal{M}_u$ represents the uncentered Hardy-Littlewood maximal operator. We have that $\Omega_\lambda$ is open and, since $\|\mathcal{V}_r(F)\|_{L^1(\R^n)}<\infty$, we conclude that $|\Omega_{\lambda}|<\infty$. Therefore, we can take a Whitney decomposition $\Omega_{\lambda}=\bigcup_{i\in \N}Q_i$, where $Q_i$ are dyadic and disjoint cubes such that
$$ 
\sqrt{n}\ \ell(Q_i)\le \textrm{dist}(Q_i,\R^n\setminus \Omega_{\lambda})<4
\sqrt{n}\ \ell(Q_i).
$$
Hence,
$$
\mathcal{V}_r(F)(x)\leq \lambda, \, \textrm{ for a.e.}\, x\in \R^{n}\setminus \Omega_{\lambda} , \quad \dashint_{Q_i}|\mathcal{V}_r(F)(x)|\,dx\leq 8^n\lambda, \quad \textrm{and}\quad |\Omega_{\lambda}|\leq\frac{1}{\lambda}\int_{\Omega_\lambda}|\mathcal{V}_r(F)(x)|\,dx.
$$
Then if we set 
$$
\mathcal{G}=\mathcal{V}_r(F)\chi_{\R^n\setminus \Omega_{\lambda}}+\sum_{i\in \N}\left(\dashint_{Q_i}\mathcal{V}_r(F)\right)\chi_{Q_i}, \,\textrm{ and }\,
\mathcal{B}=\sum_{i\in \N}\left(\mathcal{V}_r(F)-\dashint_{Q_i}\mathcal{V}_r(F)\right)\chi_{Q_i}
$$
we have that $\mathcal{V}_r(F)=\mathcal{G}+\mathcal{B}$ is a Calder\'on-Zygmund decomposition of $\mathcal{V}_r(F)$ at heigh $\lambda$ satisfiying:
$$
|\mathcal{G}(x)|\leq 10^n\lambda,\,\textrm{for a.e. }x\in \R^n,\quad \|\mathcal{G}\|^r_{L^{r}(\R^n)}\leq (10^n\lambda)^{r-1}\|\mathcal{V}_r(F)\|_{L^1(\R^n)}, 
$$
$$
\int_{Q_i}\mathcal{B}(x)dx=0,\,\dashint_{Q_i}|\mathcal{B}(x)|dx\leq 2
\dashint_{Q_i}|\mathcal{V}_r(F)(x)|dx,\,\textrm{ and }\, \|\mathcal{B}\|_{L^1(\R^n)}\leq 2\|\mathcal{V}_r(F)\|_{L^1(\R^n)}.
$$
Now set $F=G+H,$
where 
$$
G(x,t)=F(x,t)\chi_{\R^{n}\setminus\Omega_{\lambda}}(x)+\sum_{i\in \N}\chi_{Q_i}(x)\dashint_{Q_i}F(y,t)\, dy,$$
and
$$
H(x,t)=\sum_{i\in \N}\chi_{Q_i}(x)\left(F(x,t)-\dashint_{Q_i}F(y,t)\, dy\right)=:\sum_{i\in \N}H_i(x,t).
$$
Then,
\begin{multline*}
\left|\left\{x\in \R^n:\mathcal{V}_r(\mathcal{T}_{*}(F))(x)>{\lambda}\right\}\right|
\\
\leq
\left|\left\{x\in \R^n:\mathcal{V}_r(\mathcal{T}_{*}(G))(x)>\frac{\lambda}{2}\right\}\right|
+
\left|\left\{x\in \R^n:\mathcal{V}_r(\mathcal{T}_{*}(H))(x)>\frac{\lambda}{2}\right\}\right|=:I+II.
\end{multline*}
Applying Chebychev's inequality and the $ L^r(\R^n)$ boundedness of $\mathcal{T}_{*}$, we obtain
\begin{multline*}
I
\lesssim \frac{1}{\lambda^r}\int_{\R^n}\mathcal{V}_r(\mathcal{T}_{*}(G))(x)^r\,dx
\\
=
\frac{1}{\lambda^r}\int_0^{\infty}\int_{\R^n}|\mathcal{T}_{*}(G(\cdot, t))(x)|^r\,dx\frac{dt}{t}
\lesssim
\frac{1}{\lambda^r}\int_0^{\infty}\int_{\R^n}|G(x,t)|^r\,dx\frac{dt}{t}
\\
=\frac{1}{\lambda^r}\int_{\R^n}\int_0^{\infty}\left|F(x,t)\chi_{\R^n\setminus \Omega_{\lambda}}(x)+\sum_{i\in \N}\chi_{Q_i}(x)\dashint_{Q_i}|F(y,t)|\,dy
\right|^r\frac{dt}{t}\,dx
\\
\lesssim
\frac{1}{\lambda^r}\int_{\R^n}\left|\chi_{\R^n\setminus \Omega_{\lambda}}(x)\mathcal{V}_r(F)(x)+\sum_{i\in \N}\chi_{Q_i}(x)\dashint_{Q_i}\mathcal{V}_r(F)(y)\,dy
\right|^r\,dx
\\
= \frac{1}{\lambda^r}
\|\mathcal{G}\|^r_{L^r(\R^n)}
\lesssim 
\frac{1}{\lambda}\|\mathcal{V}_r(F)\|_{L^1(\R^n)}.
\end{multline*}
As for the estimate of $II$, note that
\begin{equation*}
II
\lesssim
\left|\bigcup_{i\in \N}2\sqrt n\, Q_i\right|+
\left|\left\{x\in \R^n\setminus\left(\bigcup_{i\in \N}2\sqrt n\, Q_i\right) :\mathcal{V}_r(\mathcal{T}_{*}(H))(x)>\frac{\lambda}{2}\right\}\right|.
\end{equation*}
Then, since
$$
\left|\bigcup_{i\in \N}2\sqrt n\, Q_i\right|
\lesssim
|\Omega_{\lambda}|
\lesssim
\frac{1}{\lambda}\|\mathcal{V}_r(F)\|_{L^1(\R^n)},
$$
we just need to consider the second term in the previous sum.  For  $t>0$, and $x\in \R^n\setminus\left(\bigcup_{j\in \N}2\sqrt n\, Q_j\right)$, let us study the $\mathcal{T}_{*}(H)(x,t)$. Pick $\varepsilon>0$ and consider 
$$
\left|\int_{|x-y|>\varepsilon}
K(x,y)H(y,t)\,dy\right|= \left| \sum_{i\in \N}\int_{|x-y|>\varepsilon}
K(x,y)H_{i}(y,t)\,dy\right|.
$$
We distinguish three possible cases in the series. Case $1$: $Q_i\subset B(x,\varepsilon)$. Then,
$Q_i\cap(\R^n\setminus B(x,\varepsilon))=\emptyset$, and consequently
$$
\left|\int_{|x-y|>\varepsilon}
K(x,y)H_i(y,t)\,dy\right|=0.
$$ 
Case $2$: $Q_i\subset \R^n\setminus B(x,\varepsilon)$. Call $y_{i}$ the centre of $Q_{i}$ and $\ell(Q_{i})$ its length. Since $x\in \R^n\setminus 2\sqrt n\, Q_i$,  we have   $|x-y_i|>2|y-y_i|$ for any $y\in Q_{i}$. As   $\supp(H_i)\subset Q_i\subset \R^n\setminus B(x,\varepsilon)$, we can use the mean value $\int_{\R^n}H_i(y,t)\, dy=0$ to obtain 
 \begin{align*}
\left|\int_{|x-y|>\varepsilon}
K(x,y)H_i(y,t)\,dy\right|
& \leq
\int_{Q_{i}}
|K(x,y)-K(x,y_i)||H_i(y,t)|\,dy
\\ &
\lesssim 
\frac{\ell(Q_i)^{\delta}}{|x-y_i|^{n+\delta}}\int_{Q_i}
|H_i(y,t)|\,dy \lesssim \frac{\ell(Q_i)^{\delta}}{|x-y_i|^{n+\delta}}\int_{Q_i}
|F(y,t)|\,dy.
\end{align*}
Case $3$: $B(x,\varepsilon)\cap Q_i\neq \emptyset$ but $Q_i\nsubset B(x,\varepsilon)$. Note that  $\varepsilon> \sqrt{n}\, \ell(Q_i)/2$. Indeed, if 
$x_0\in B(x,\varepsilon)\cap Q_i$, $\sqrt{n}\, \ell(Q_i)\le |x-y_i|\leq |x_0-x|+|x_0-y_i|< \varepsilon+\frac{\sqrt{n}\, \ell(Q_i)}{2}$ hence $ \sqrt{n}\, \ell(Q_i)/2<\varepsilon$. It follows that $Q_i\subset B(x,3\varepsilon)$. Hence, 
\begin{multline*}
\left|\int_{|x-y|>\varepsilon}
K(x,y)H_i(y,t)\,dy\right|
\leq
\int_{\varepsilon<|x-y|<3\varepsilon}\frac{1}{|x-y|^{n}}
|H_i(y,t)|\,dy
\\
\lesssim
\dashint_{B(x,3\varepsilon)}
|F(y,t)|\chi_{Q_i}(y)\,dy+
\dashint_{B(x,3\varepsilon)}
\dashint_{Q_i}|F(z,t)|\,dz\chi_{Q_i}(y)\,dy
\lesssim
\dashint_{B(x,3\varepsilon)}
|F(y,t)|\chi_{Q_i}(y)\,dy.
\end{multline*}
It follows that
$$
\left|\int_{|x-y|>\varepsilon}
K(x,y)H(y,t)\,dy\right| \lesssim   \sum_{i\in \N} \frac{\ell(Q_i)^{\delta}}{|x-y_i|^{n+\delta}}\int_{Q_i}
|F(y,t)|\,dy + \dashint_{B(x,3\varepsilon)}
|F(y,t)|\chi_{\Omega_{\lambda}}(y)\,dy.
$$
Taking the supremum over all $\varepsilon>0$, we obtain, 
$$
\mathcal{T}_{*}(H)(x,t) \le \sum_{i\in \N} \frac{\ell(Q_i)^{\delta}}{|x-y_i|^{n+\delta}}\int_{Q_i}
|F(y,t)|\,dy + \mathcal{M}(F)(x,t).
$$
Therefore, by Minkowski inequality
$$
\mathcal{V}_r(\mathcal{T}_{*}(H))(x) \le \sum_{i\in \N}
\frac{\ell(Q_i)^{\delta}}{|x-y_i|^{n+\delta}}\int_{Q_i}
|\mathcal{V}_r(F)(y)|\,dy + \mathcal{V}_r(\mathcal{M}(F))(x).
$$

Consequently, applying Fefferman-Stein  weak type $(1,1)$ inequality  and Chebychev's inequality
\begin{multline*}
\left|\left\{x\in \R^n\setminus\left(\bigcup_{i\in \N}2\sqrt n\, Q_i\right) :\mathcal{V}_r(\mathcal{T}_{*}(H))(x)>\frac{\lambda}{2}\right\}\right|
\\
\lesssim \left|\left\{x\in \R^n:\mathcal{V}_r(\mathcal{M}(F))(x)>\frac{\lambda}{4}\right\}\right|
+
\frac{1}{\lambda}\sum_{i\in \N}
\int_{\R^{n}\setminus 2\sqrt n\, Q_i}
\frac{\ell(Q_i)^{\delta}}{|x-y_i|^{n+\delta}}\,dx\int_{Q_i}
|\mathcal{V}_r(F)(y)|\,dy
\\
\lesssim 
\frac{1}{\lambda}\left(\|\mathcal{V}_r(F)\|_{L^1(\R^n)}
+
\int_{\Omega_\lambda}
|\mathcal{V}_r(F)(y)|\,dy\right)\lesssim
\frac{1}{\lambda}\|\mathcal{V}_rF\|_{L^1(\R^n)}.
\end{multline*}
\qed
\subsection{Proof of Theorem \ref{thm:C-Zoperator}, part $(c)$}
 For $q\le 1$, the space $T^q_{r}$  has an atomic decomposition.  An atom in $T^q_r$ is a measurable function $A(x,t)$ such that there exists a ball $B\subset \R^n$  with $\supp(A)\subset\widehat{B}:=\{(x,t)\in \R^{n+1}_+: d(x,\R^n\setminus B)\geq t\}$, and 
\begin{equation}
\label{eq:qatom}
\left(\iint_{\widehat{B}}|A(x,t)|^r\frac{dx\,dt}{t}\right)^{\frac{1}{r}}\leq |B|^{\frac{1}{r}-\frac{1}{q}}.
\end{equation}
In order to keep things clear we write this as a proposition, whose proof can be found in \cite[Section 8, Proposition 5]{CoifmanMeyerStein}.
\begin{proposition}\label{prop:atoms}
Let $F\in T^q_r$, $0<q\leq 1$ and $1<r<\infty$. Then $F=\sum_{i=1}^{\infty}\lambda_iA_i$, where $A_i$ are $T^q_r$ atoms, $\lambda_i\in \C$, and  $\left(\sum_{i=1}^{\infty}|\lambda_i|^q\right)^{\frac{1}{q}}\lesssim \|F\|_{T^q_r}$. Conversely, any such sum converges in $T^q_{r}$ and 
$ \|\sum_{i=1}^{\infty}\lambda_iA_i\|_{T^q_{r}} \lesssim \left(\sum_{i=1}^{\infty}|\lambda_i|^q\right)^{\frac{1}{q}}.$
\end{proposition}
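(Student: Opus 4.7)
The plan is to run the classical Coifman--Meyer--Stein construction based on level sets of $\mathcal{A}_r(F)$ and tents over their enlargements.

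For the forward direction, fix $F\in T^q_r$ and set $O_k:=\{x\in\R^n:\mathcal{A}_r(F)(x)>2^k\}$, so that layer cake gives $\sum_{k\in\Z}2^{kq}|O_k|\approx \|F\|_{T^q_r}^q$. For a sufficiently small constant $\gamma\in(0,1)$ (chosen below), put $O_k^*:=\{x:\mathcal{M}(\chi_{O_k})(x)>\gamma\}$; by the weak $(1,1)$ inequality $|O_k^*|\lesssim |O_k|$, and the family $\{O_k^*\}$ remains nested. Writing $\widehat{E}:=\{(y,t)\in \R^{n+1}_+:B(y,t)\subset E\}$, set $\Delta_k:=\widehat{O_k^*}\setminus\widehat{O_{k+1}^*}$. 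A preliminary geometric observation is that if $F(y,t)\ne 0$, then $\mathcal{A}_r(F)(x)>0$ for a.e.\ $x\in B(y,t)$, so $|B(y,t)\cap O_k|\to|B(y,t)|$ as $k\to -\infty$; using $B(y,t)\subset B(x,2t)$ for any $x\in B(y,t)$, this forces $\mathcal{M}(\chi_{O_k})(x)>\gamma$ on $B(y,t)$ once $k$ is sufficiently negative, i.e.\ $(y,t)\in\widehat{O_k^*}$. Hence $\{\Delta_k\}_k$ essentially partitions $\{F\ne 0\}$.

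Next, Whitney-decompose each $O_k^*$ into balls $\{B_{k,j}\}_j$ of bounded overlap with $\sum_j|B_{k,j}|\lesssim|O_k^*|$, and fix an absolute constant $c>1$ so that $\Delta_k\subset\bigcup_j\widehat{cB_{k,j}}$. Define
\begin{equation*}
A_{k,j}(x,t):=\lambda_{k,j}^{-1}F(x,t)\chi_{\Delta_k\cap\widehat{cB_{k,j}}}(x,t),\qquad \lambda_{k,j}:=C\cdot 2^k|B_{k,j}|^{1/q},
\end{equation*}
so that by construction $F=\sum_{k,j}\lambda_{k,j}A_{k,j}$ a.e.\ on $\R^{n+1}_+$. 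Choose $\gamma$ small enough that for every $(y,t)\in\Delta_k$ one has $|B(y,t)\setminus O_{k+1}|\ge\tfrac12|B(y,t)|$. Then Fubini yields
\begin{equation*}
\iint_{\Delta_k\cap\widehat{cB_{k,j}}}|F(y,t)|^r\frac{dy\,dt}{t}\lesssim \int_{cB_{k,j}\setminus O_{k+1}}\mathcal{A}_r(F)(x)^r\,dx\le 2^{r(k+1)}|cB_{k,j}|\lesssim 2^{rk}|B_{k,j}|,
\end{equation*}
which, for appropriate $C$, gives \eqref{eq:qatom} for $A_{k,j}$ with the ball $cB_{k,j}$. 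Finally $\sum_{k,j}|\lambda_{k,j}|^q\lesssim \sum_k 2^{kq}|O_k^*|\lesssim \sum_k 2^{kq}|O_k|\lesssim \|F\|_{T^q_r}^q$.

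For the converse, an atom $A$ supported in $\widehat{B}$ has $\mathcal{A}_r(A)$ supported in a fixed dilate $c'B$, and H\"older's inequality combined with \eqref{eq:qatom} yields
\begin{equation*}
\|A\|_{T^q_r}=\|\mathcal{A}_r(A)\|_{L^q(\R^n)}\le |c'B|^{1/q-1/r}\|\mathcal{A}_r(A)\|_{L^r(\R^n)}\lesssim |B|^{1/q-1/r}|B|^{1/r-1/q}=1.
\end{equation*}
Since $q\le 1$, $\|\cdot\|_{T^q_r}^q$ is subadditive, so $\|\sum_i\lambda_i A_i\|_{T^q_r}^q\le\sum_i|\lambda_i|^q\|A_i\|_{T^q_r}^q\lesssim\sum_i|\lambda_i|^q$, giving convergence in $T^q_r$ and the claimed quasi-norm bound. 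The main obstacle is obtaining the atom size bound: it rests on the half-density inequality $|B(y,t)\setminus O_{k+1}|\ge\tfrac12|B(y,t)|$ for $(y,t)\in\Delta_k$, which is what converts the tent-space mass of $F$ on $\Delta_k\cap\widehat{cB_{k,j}}$ into an $L^r$ integral of $\mathcal{A}_r(F)$ off the level set $O_{k+1}$.
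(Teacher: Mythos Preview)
The paper does not give its own proof of this proposition; it simply cites \cite[Section~8, Proposition~5]{CoifmanMeyerStein}. Your argument is precisely the Coifman--Meyer--Stein construction referenced there, so you are following exactly the approach the paper defers to, and the converse direction as well as the key size estimate via the half-density inequality are carried out correctly.

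One step is written imprecisely. You assert $F=\sum_{k,j}\lambda_{k,j}A_{k,j}$ a.e., but for fixed $k$ the sets $\Delta_k\cap\widehat{cB_{k,j}}$ overlap (the dilated Whitney balls have bounded overlap, hence so do their tents), so the sum actually equals $m(x,t)F(x,t)$ with $1\le m(x,t)\le N$ at overlap points. The standard fix is to disjointify: set $E_{k,j}:=\Delta_k\cap\widehat{cB_{k,j}}\setminus\bigcup_{j'<j}\widehat{cB_{k,j'}}$ and use $\chi_{E_{k,j}}$ in place of $\chi_{\Delta_k\cap\widehat{cB_{k,j}}}$ (equivalently, use a bounded partition of unity adapted to $\{cB_{k,j}\}_j$). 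Since $E_{k,j}\subset\Delta_k\cap\widehat{cB_{k,j}}$, your size estimate and the coefficient bound $\sum_{k,j}|\lambda_{k,j}|^q\lesssim\|F\|_{T^q_r}^q$ are unaffected. With this adjustment the proof is complete.
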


Let us now introduce, for $0<q\leq 1$ and $1<r<\infty$, a subspace of $T^q_r$ that we denote by $\mathfrak{T}_r^q$. We say that $A$ is a  $\mathfrak{T}^q_r$ atom if it is a $T^q_r$ atom and   satisfies $\int_{\R^n}A(x,t)dx=0$ for a.e. $t>0$. This integral makes sense as \begin{equation*}
\left(\int_{0}^{\infty}\left(\int_{\R^n}|A(x,t)|\,dx\right)^r\frac{dt}{t}\right)^{\frac{1}{r}}
\le
\left(\iint_{\widehat{B}}|A(x,t)|^r\,dx\frac{dt}{t}\right)^{\frac{1}{r}}|B|^{1-\frac{1}{r}}
\le  |B|^{1-\frac{1}{q}}<\infty.
\end{equation*}
We define $\mathfrak{T}^q_r$ as the subspace of 
$F\in T^q_r$ such that $F$ has    an atomic decomposition with $A_{i}$ being $\mathfrak{T}^q_r$ atom and  $\left(\sum_{i=1}^{\infty}|\lambda_i|^q\right)^{\frac{1}{q}}<\infty$. 

The reason to introduce those spaces is because, for $0<q\leq 1$,
we can not obtain boundedness of singular integrals (and in general of Calder\'on-Zygmund operators)
from the tent space $T^q_r$ to itself. If we want to arrive into $T^q_r$, an option is to take functions in $\mathfrak{T}^q_r$. Note that $T^q_r$ atoms, hence $\mathfrak{T}^q_r$ atoms,  belong to $T^r_{r}$.

\begin{lemma}\label{lem:extension} Suppose that $\mathcal{U}: {T}^r_r\rightarrow T^r_r$ is a linear and bounded operator and that there exists $C<\infty$ such that for all 
 $\mathfrak{T}_{r}^{q}$ atom $A$,   $\|\mathcal{U}(A)\|_{T^q_r}\le C$.
 Then, $\mathcal{U}$ has a bounded  extension from $\mathfrak{T}^q_r$ to $ T^q_r$.
 \end{lemma}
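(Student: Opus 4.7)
The plan is to endow $\mathfrak{T}^q_r$ with its natural atomic quasi-norm and extend $\mathcal{U}$ by continuity from the dense subspace of finite atomic sums, exploiting that $T^q_r$ is a quasi-Banach space for $0<q\le 1$. Define
$$
\|F\|_{\mathfrak{T}^q_r}:=\inf\left(\sum_i|\lambda_i|^q\right)^{\frac{1}{q}},
$$
the infimum running over all atomic decompositions $F=\sum_i\lambda_iA_i$ with $\mathfrak{T}^q_r$ atoms $A_i$. As observed right before the lemma, each $\mathfrak{T}^q_r$ atom belongs to $T^r_r$, so $\mathcal{U}(A)$ is unambiguously defined via the given $T^r_r$-bounded map, and the hypothesis of the lemma reads $\|\mathcal{U}(A)\|_{T^q_r}\le C$.

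The main technical tool is the $q$-subadditivity $\|G+H\|_{T^q_r}^q\le\|G\|_{T^q_r}^q+\|H\|_{T^q_r}^q$, valid because $\mathcal{A}_r(G+H)\le\mathcal{A}_r(G)+\mathcal{A}_r(H)$ pointwise and $(a+b)^q\le a^q+b^q$ for $q\le 1$. For any finite linear combination $F=\sum_{i=1}^N\lambda_iA_i$, which belongs to $T^r_r$ so that $\mathcal{U}(F)=\sum_{i=1}^N\lambda_i\mathcal{U}(A_i)$ by linearity of the original $\mathcal{U}$, this immediately yields
$$
\|\mathcal{U}(F)\|_{T^q_r}^q\le\sum_{i=1}^N|\lambda_i|^q\,\|\mathcal{U}(A_i)\|_{T^q_r}^q\le C^q\sum_{i=1}^N|\lambda_i|^q,
$$
and taking the infimum over representations gives $\|\mathcal{U}(F)\|_{T^q_r}\le C\|F\|_{\mathfrak{T}^q_r}$ on the subspace of finite atomic sums.

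Finite sums of atoms are $\|\cdot\|_{\mathfrak{T}^q_r}$-dense in $\mathfrak{T}^q_r$: given $F=\sum_i\lambda_iA_i$, the $N$-th partial sum $F_N$ satisfies $\|F-F_N\|_{\mathfrak{T}^q_r}^q\le\sum_{i>N}|\lambda_i|^q\to 0$ since the tail is itself an atomic decomposition of $F-F_N$. Because $T^q_r$ is complete and the displayed bound above shows $\mathcal{U}$ is bounded from the dense subspace of finite atomic sums (with the $\|\cdot\|_{\mathfrak{T}^q_r}$ quasi-norm) into $T^q_r$, a standard extension by continuity produces a unique bounded linear map $\bar{\mathcal{U}}\colon\mathfrak{T}^q_r\to T^q_r$ with the same constant $C$.

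The delicate point is well-definedness, i.e., that two different atomic decompositions of the same $F\in\mathfrak{T}^q_r$ lead to the same value of $\bar{\mathcal{U}}(F)$. This is handled uniformly by the density/continuity framework: if $(F_N)$ and $(F_N')$ are two sequences of finite atomic sums that are Cauchy in $\|\cdot\|_{\mathfrak{T}^q_r}$ and converge to $F$ in $\mathfrak{T}^q_r$ (hence in $T^q_r$, since $\|\cdot\|_{T^q_r}^q\le\|\cdot\|_{\mathfrak{T}^q_r}^q$), then the bound $\|\mathcal{U}(F_N)-\mathcal{U}(F_N')\|_{T^q_r}\le C\|F_N-F_N'\|_{\mathfrak{T}^q_r}\to 0$ forces $\mathcal{U}(F_N)$ and $\mathcal{U}(F_N')$ to have the same limit in $T^q_r$. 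This is where the $q$-subadditivity of the quasi-norm is essential, and I expect this to be the only genuinely non-routine part of the argument.
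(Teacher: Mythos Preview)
Your argument has a genuine gap at the step ``taking the infimum over representations gives $\|\mathcal{U}(F)\|_{T^q_r}\le C\|F\|_{\mathfrak{T}^q_r}$''. The preceding display only bounds $\|\mathcal{U}(F)\|_{T^q_r}^q$ by $C^q\sum_{i=1}^N|\lambda_i|^q$ for the \emph{particular} finite representation used to write $F$; taking the infimum over all \emph{finite} representations gives a bound in terms of a finite-atomic norm, but $\|F\|_{\mathfrak{T}^q_r}$ is the infimum over \emph{all} (generally infinite) atomic decompositions and may be strictly smaller. Your well-definedness paragraph then invokes exactly the unproved inequality $\|\mathcal{U}(F_N-F_N')\|_{T^q_r}\le C\|F_N-F_N'\|_{\mathfrak{T}^q_r}$, so the argument is circular. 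Concretely: if $\sum_i\lambda_iA_i=\sum_j\mu_jB_j$ are two decompositions of the same $F$ with partial sums $F_N,F_N'$, the obvious finite representation of $F_N-F_N'$ has $\ell^q$-cost of order $\sum_{i\le N}|\lambda_i|^q+\sum_{j\le N}|\mu_j|^q$, which does not tend to zero; the only small-cost decompositions of $F_N-F_N'$ available are \emph{infinite} ones, and your finite-sum bound says nothing about those.

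The paper closes this gap precisely by using the $T^r_r$-boundedness of $\mathcal{U}$ in a stronger way than just defining $\mathcal{U}$ on atoms. It passes to a dense subclass $E_r\subset\mathfrak{T}^q_r$ of compactly supported $r$-integrable functions (built by truncating atoms in $t$), and for $F\in E_r$ shows that a near-optimal atomic decomposition of $F$ actually converges \emph{in $T^r_r$} (the compact $t$-support lets one discard small atoms, then one invokes \cite{AMcR}). Continuity of $\mathcal{U}$ on $T^r_r$ then gives $\mathcal{U}(F)=\sum_i\lambda_i\,\mathcal{U}(A_i)$ with convergence in $T^r_r$, so the value of $\mathcal{U}(F)$ is pinned down by the already-given operator, not by a limit depending on the decomposition. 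Combined with the $q$-subadditive estimate on partial sums this yields $\|\mathcal{U}(F)\|_{T^q_r}\le 2C\|F\|_{\mathfrak{T}^q_r}$ without circularity. The essential missing ingredient in your approach is this $T^r_r$-convergence of the atomic series.
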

 
 \begin{proof} Let   $A$ be a $\mathfrak{T}^q_r$ atom such that $\supp(A)\subset \widehat{B}$, for some ball $B\subset \R^n$.
Defining, for $0<\eta<\rho$, where $\rho$ is the radius of $B$,
$$
A_{\eta}(y,t):=\begin{cases}
A(y,t)\, \textrm{if}\, t> \eta,
\\
0\, \textrm{if}\, t\leq \eta,
\end{cases}\,
$$
we have that $A-A_{\eta}$  are $\mathfrak{T}^q_r$ atoms, uniformly in $\eta$, thus
$$
\|A - A_{\eta}\|_{\mathfrak{T}^q_{r}} \le |B|^{\frac{1}{q}-\frac{1}{r}} \left(\iint_{\widehat{B}}|A(x,t)- A_{\eta}(x,t)|^r\frac{dx\,dt}{t}\right)^{\frac{1}{r}} \to 0
$$
by the dominated convergence theorem. This and the fact that finite linear combinations of $\mathfrak{T}^q_r$ atoms are dense in $\mathfrak{T}^q_r$ by definition,  imply that  the set $E_r$ of compactly supported functions $\varphi$ in $\R^{n+1}_{+}$ that are  $r$ integrable and $ \int_{\R^n}\varphi(x,t)\,dx=0$ {for a.e.} $t>0$ is dense in  $\mathfrak{T}^q_r$. Then, let $F\in E_{r}$ and take a decomposition $F=\sum_{i=0}^{\infty}\lambda_iA_i$, where $A_i$ are $\mathfrak{T}_{r}^{q}$ atoms and  $\left(\sum_{i=1}^{\infty}|\lambda_i|^q\right)^{\frac{1}{q}} \le 2 \|F\|_{\mathfrak{T}^q_{r}}$. Since  the $t$ support of $F$ is contained in some interval $[a,b]$, we may eliminate the atoms associated to balls with radii less than $a$. Following the proof of Theorem 4.9 in \cite{AMcR}, we obtain that the decomposition converges in $T^r_{r}$. Thus we may write
$$
\mathcal{U}(F)= \sum_{i=0}^\infty \lambda_i\,  \mathcal{U}(A_i)
$$
and use the hypothesis to conclude that $\|\mathcal{U}(F)\|_{T^q_{r}}\le 2 C\|F\|_{\mathfrak{T}^q_{r}}$. By density, we conclude the argument. 
\end{proof}

We say that function $M$ is a $T^q_r$ molecule if there exists a ball $B\subset \R^n$ such that, for some $\varepsilon>0$,
\begin{align*}
\left(\iint_{\widehat{4B}}|M(x,t)|^r\frac{dx\,dt}{t}\right)^{\frac{1}{r}}\leq |4B|^{\frac{1}{r}-\frac{1}{q}}
\end{align*}
and, for all $j\geq 2$,
\begin{align*}
\left(\iint_{\widehat{C_{j}}}|M(x,t)|^r\frac{dx\,dt}{t}\right)^{\frac{1}{r}}\leq 2^{-({j+1})\varepsilon}|2^{j+1}B|^{\frac{1}{r}-\frac{1}{q}},
\end{align*}
where we define $\widehat{C_{j}}:=\widehat{2^{j+1}B}\setminus
\widehat{2^{j}B}$ and $\widehat{C_{1}}=\widehat{4B}$ . 
 By writing $M= \sum_{j\ge 1} {\bf 1}_{ \widehat{C_{j}}} M$ and observing that the functions ${\bf 1}_{ \widehat{C_{j}}} M$ are $T^q_{r}$ atoms up to factor $2^{-(j+1)\varepsilon}$, we obtain $\|M\|_{T^{q}_{r}} \le \left(\sum_{j\geq 1}|2^{-(j+1)\varepsilon }|^q\right)^{\frac{1}{q}}$.

Let us finally prove part $(c)$ of Theorem \ref{thm:C-Zoperator}. We  follow the same scheme as in \cite{CoifmanWeiss} and show that Calder\'on-Zygmund operators of order $\delta \in (0,1]$ apply $\mathfrak{T}^q_r$ atoms  to $T^q_r$ molecules, provided that $q>\frac{n}{n+\delta}$, up to a constant that depends uniquely on $\delta, n, r, q$ and the properties of the operator. From the previous lemma, this suffices.

Let $A$ is a $\mathfrak{T}^q_r$ atom. Let $B$ be a ball such that
$\supp A\subset \widehat{B}$, and
\begin{align*}
\left(\iint_{\widehat{B}}|A(x,t)|^r\right)^{\frac{1}{r}}\leq |B|^{\frac{1}{r}-\frac{1}{q}}.
\end{align*}
We shall show that, for $\varepsilon=n+\delta-\frac{n}{q}$ which is positive since $q>\frac{n}{n+\delta}$,
\begin{list}{$(\theenumi)$}{\usecounter{enumi}\leftmargin=1cm \labelwidth=1cm\itemsep=0.2cm\topsep=.0cm \renewcommand{\theenumi}{\arabic{enumi}}}
\item $\left(\iint_{\widehat{4B}}|\mathcal{T}(A(\cdot,t))(x)|^r\right)^{\frac{1}{r}}\lesssim |4B|^{\frac{1}{r}-\frac{1}{q}}$;

\item for $j\geq 2$, $\left(\iint_{\widehat{C_j}}|\mathcal{T}(A(\cdot,t))(x)|^r\right)^{\frac{1}{r}}\lesssim 2^{-({j+1})\varepsilon}|B_{j+1}|^{\frac{1}{r}-\frac{1}{q}}$.

\end{list}
For each $j\geq 2$, denote by $r_j:=2^{j}r_B$ and $B_j:=B(x_B,r_j)$. Besides, recall that $\widehat{C_1}:=\widehat{B_{2}}$ and $\widehat{C_j}:=\widehat{B_{j+1}}\setminus \widehat{B_j}$, for all $j\geq 2$.

We start by proving $(1)$. Since $\mathcal{T}$ is bounded in $T_{r}^r$, we have that
\begin{align*}
\left(\iint_{\widehat{4B}}|\mathcal{T}(A(\cdot,t))(x)|^r\frac{dx\,dt}{t}\right)^{\frac{1}{r}}
\lesssim
\left(\iint_{\widehat{B}}|A(x,t)|^r\frac{dx\,dt}{t}\right)^{\frac{1}{r}}
\lesssim
|4B|^{\frac{1}{r}-\frac{1}{q}}.
\end{align*}
On the other hand, for $j\geq 2$, because $A(x,t)=0$ for $t>r_{B}$, the radius of $B$,  
\begin{equation*}
\left(\iint_{\widehat{C_j}}|\mathcal{T}(A(\cdot,t))(x)|^{r}\frac{dx\,dt}{t}\right)^{\frac{1}{r}}
\le
\left(\int_{0}^{r_{B}}\int_{B_{j+1}\setminus B_{j-1}}|\mathcal{T}(A(\cdot,t))(x)|^r\frac{dx\,dt}{t}\right)^{\frac{1}{r}}.
\end{equation*}
Now, applying the fact that $\int_{\R^n}A(x,t)\,dx=0$ for a.e. $t>0$, and the property \eqref{kernel2} of the kernel $K$, we obtain that
\begin{multline*}
I
\leq
\left(\int_{0}^{r_{B}}\int_{r_{j-1}\le |x-x_B|< r_{j+1}}\left|\int_{\R^n}K(x,y)A(y,t)\,dy\right|^r\frac{dx\,dt}{t}\right)^{\frac{1}{r}}
\\
=
\left(\int_{0}^{r_{B}}\int_{r_{j-1}\le |x-x_B|< r_{j+1}}\left|\int_{\R^n}(K(x,y)-K(x,x_B))A(y,t)\,dy\right|^r\frac{dx\,dt}{t}\right)^{\frac{1}{r}}
\\
\lesssim
\left(\int_{0}^{r_{B}}\int_{r_{j-1}\le |x-x_B|< r_{j+1}}\left(\int_{\R^n}\frac{|x_B-y|^{\delta}}{|x-x_B|^{n+\delta}}|A(y,t)|\,dy\right)^r\frac{dx\,dt}{t}\right)^{\frac{1}{r}}
\\
\lesssim
\left(\int_{0}^{r_{B}}\int_{r_{j-1}\le |x-x_B|< r_{j+1}}\dashint_{B}|A(y,t)|^r\,dy\frac{dx\,dt}{t}\right)^{\frac{1}{r}} 2^{-(j+1)(n+\delta)}
\\
\lesssim
\left(\int_{0}^{r_{B}}\int_{B}|A(y,t)|^r\frac{dy\,dt}{t}\right)^{\frac{1}{r}} 2^{-(j+1)\left(n\left(1-\frac{1}{r}\right)+\delta\right)}
\\
\lesssim
 2^{-(j+1)\left(n\left(1-\frac{1}{r}\right)+\delta\right)}|B|^{\frac{1}{r}-\frac{1}{q}}
= 2^{-(j+1)\left(n+\delta-\frac{n}{q}\right)}|2^{j+1}B|^{\frac{1}{r}-\frac{1}{q}}.
\end{multline*}
This shows $(2)$. 
\qed
\subsection{Proof of Theorem \ref{thm:C-Zoperator}, part $(d)$}
Remark that if $M$ is a $T^q_r$ molecule, then
\begin{multline*}
\left(\int_{0}^{\infty}\left(\int_{\R^n}|M(x,t))|\,dx\right)^r\frac{dt}{t}\right)^{\frac{1}{r}}
\lesssim
\sum_{j\geq 1}
\left(\int_{0}^{\infty}\left(\int_{\R^n}\chi_{\widehat{C_j}}(x,t)|M(x,t)|\,dx\right)^r\frac{dt}{t}\right)^{\frac{1}{r}}
\\
\lesssim
\sum_{j\geq 1}
\left(\iint_{\widehat{C_j}}|M(x,t)|^r\,dx\frac{dt}{t}\right)^{\frac{1}{r}}|B_{j+1}|^{1-\frac{1}{r}}
\lesssim
\sum_{j\geq 1} 2^{-(j+1)\varepsilon}
|B_{j+1}|^{1-\frac{1}{q}}
\lesssim |B|^{1-\frac{1}{q}}<\infty
\end{multline*}
as $1-\frac{1}{q}\le 0$.
Therefore, if, in addition,  $\int_{\R^n}M(x,t)\,dx=0$, for a.e. $t>0$, we say that $M$ is a $\mathfrak{T}^q_{r}$ molecule.  A $\mathfrak{T}^q_r$ molecule can be written as a series of $\mathfrak{T}^q_r$ atoms. We see that in the next result.
\begin{proposition}\label{prop:moleculesumatoms} There exists a constant $C<\infty$ such that 
given  a $\mathfrak{T}^q_r$ molecule $M$,
we have that $M\in \mathfrak{T}^q_r$, with
$\|M\|_{\mathfrak{T}^q_r}\leq C.$
\end{proposition}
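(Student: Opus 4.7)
The plan is to produce an atomic decomposition $M=\sum_j\lambda_j\widetilde A_j$ with each $\widetilde A_j$ a $\mathfrak{T}^q_r$ atom and $\{\lambda_j\}\in\ell^q$. Split $M=\sum_{j\ge 1}M_j$ along the molecular annuli by $M_j:=M\chi_{\widehat{C_j}}$, so the molecular estimate gives $\|M_j\|_{T^r_r}\le 2^{-(j+1)\varepsilon}|B_{j+1}|^{1/r-1/q}$. Each $M_j$ typically fails to have vanishing $x$-mean $c_j(t):=\int_{\R^n}M_j(x,t)\,dx$, so the strategy is to correct it by subtracting a normalized bump $\Psi_j$ and then telescope the correction series, using the hypothesis $\int_{\R^n}M(x,t)\,dx=0$ a.e.\ in $t$.

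For the correction I take the non-singular profile
\[
\Psi_j(x,t):=\frac{\chi_{B_{j+1}}(x)\,\chi_{(0,r_{j+1})}(t)}{|B_{j+1}|},
\]
supported in $\widehat{B_{j+2}}$ (since $|x-x_B|+t<2r_{j+1}=r_{j+2}$ on its support) and satisfying $\int\Psi_j(x,t)\,dx=1$ for every $t<r_{j+1}$. Because $c_j(t)=0$ for $t\ge r_{j+1}$, the function $A_j:=M_j-c_j\Psi_j$ is supported in $\widehat{B_{j+2}}$ and has vanishing $x$-integral. H\"older applied to $c_j(t)$ followed by the molecular bound yields $\|c_j\Psi_j\|_{T^r_r}\le\|M_j\|_{T^r_r}$ (the factors $|B_{j+1}|^{r-1}$ and $|B_{j+1}|^{1-r}$ cancel), whence $A_j$ is a multiple of a $\mathfrak{T}^q_r$ atom for $B_{j+2}$ with coefficient $\lesssim 2^{-(j+1)\varepsilon}$, summable in $\ell^q$.

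To handle the remaining piece $\sum_j c_j\Psi_j=M-\sum_j A_j$, set $d_j(t):=\sum_{k\ge j}c_k(t)=-\int_{B(x_B,(r_j-t)_+)}M(x,t)\,dx$; the mean-zero hypothesis on $M$ forces $d_1\equiv 0$ and, crucially, $d_j(t)=0$ for $t\ge r_j$. Abel summation then gives
\[
\sum_{j\ge 1}c_j\Psi_j=\sum_{j\ge 2}d_j(\Psi_j-\Psi_{j-1}),
\]
each summand supported in $\widehat{B_{j+2}}$; on $\{t<r_j\}$ (the support of $d_j$) one has $\int(\Psi_j-\Psi_{j-1})(\cdot,t)\,dx=1-1=0$, so vanishing $x$-integral is preserved. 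Bounding $d_j$ via Minkowski and H\"older applied to each $c_k$ for $k\ge j$, the tail $\sum_{k\ge j}2^{-(k+1)\varepsilon}|B_{k+1}|^{1-1/q}$ converges thanks to $1-1/q\le 0$, producing $\|d_j(\Psi_j-\Psi_{j-1})\|_{T^r_r}\lesssim 2^{-j\varepsilon}|B_{j+2}|^{1/r-1/q}$, again $\ell^q$-summable. The main obstacle is arranging $\Psi_j$ to integrate to $1$ for every $t$ at which $c_j(t)\ne 0$ while remaining bounded and supported inside a tent: a bump on the exact $t$-slice of $\widehat{B_{j+1}}$ would develop a pole as $t\to r_{j+1}$, so I inflate one generation to $\widehat{B_{j+2}}$ to gain the necessary room. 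Both series converge in $T^r_r$ and hence in $T^q_r$ by the converse part of Proposition~\ref{prop:atoms}, giving $M\in\mathfrak{T}^q_r$ with $\|M\|_{\mathfrak{T}^q_r}\lesssim 1$.
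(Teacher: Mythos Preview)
Your proof is correct and follows essentially the same approach as the paper: your corrected pieces $A_j=M_j-c_j\Psi_j$ coincide with the paper's $\alpha_j$ (since $c_j(t)$ vanishes for $t\ge r_{j+1}$, the extra indicator in $\Psi_j$ is harmless), and your Abel-summed terms $d_j(\Psi_j-\Psi_{j-1})$ are, after a shift of index, exactly the paper's $\beta_{j-1}$, with the same support, mean-zero, and size estimates. The only cosmetic difference is that you package the correction profile explicitly as $\Psi_j$ and phrase the telescoping via $d_j=\sum_{k\ge j}c_k$, whereas the paper writes the same objects in terms of $\int\chi_{\widehat{B_{j+1}}}M$.
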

\begin{proof}
Let $M$ be  a $\mathfrak{T}^q_r$ molecule with associated ball $B=B(x_B,r_B)$.
Following the notation in the previous proof, write
$$
M(x,t)=\sum_{j=1}^{\infty}\left(M(x,t)\chi_{\widehat{C_j}}(x,t)-
\int_{\R^n}\chi_{\widehat{C_j}}(y,t)M(y,t)\,dy\frac{\chi_{{B_{j+1}}}(x)}{|B_{j+1}|}\right)
+
\sum_{j=1}^{\infty}
\int_{\R^n}\chi_{\widehat{C_j}}(y,t)M(y,t)\,dy\frac{\chi_{{B_{j+1}}}(x)}{|B_{j+1}|}.
$$
For all $j\geq 1$, define
$$
\alpha_j(x,t):=
M(x,t)\chi_{\widehat{C_j}}(x,t)-
\int_{\R^n}\chi_{\widehat{C_j}}(y,t)M(y,t)\,dy\frac{\chi_{{B_{j+1}}}(x)}{|B_{j+1}|},
$$
and observe that $\supp \alpha_j\subset B_{j+1}\times (0,r_{j+1}] \subset \widehat{B_{j+2}}$  and 
$$
\int_{\R^n}\alpha_j(x,t)\, dx=\int_{\R^n}\chi_{\widehat{C_j}}(y,t)M(y,t)\,dy\left(1-
\int_{\R^n}\frac{\chi_{B_{j+1}}(x)}{|B_{j+1}|}\,dx\right)=0.
$$
Besides,
\begin{multline*}
\left(\iint_{\widehat{B_{j+2}}}|\alpha_j(x,t)|^r\frac{dx\,dt}{t}\right)^{\frac{1}{r}}
\leq
\left(\iint_{\widehat{C_{j}}}|M(x,t)|^r\frac{dx\,dt}{t}\right)^{\frac{1}{r}}
\\
+
\left(\iint_{\widehat{B_{j+2}}}\chi_{B_{j+1}}(x)\left(\frac{1}{|B_{j+1}|}\int_{B_{j+1}}\chi_{\widehat{C_j}}(y,t)|M(y,t)|\,dy\right)^r\frac{dx\,dt}{t}\right)^{\frac{1}{r}}
\leq 2 \left(\iint_{\widehat{C_{j}}}|M(x,t)|^r\frac{dx\,dt}{t}\right)^{\frac{1}{r}}
\\
\leq 
2^{-(j+1)\varepsilon+1}|B_{j+1}|^{\frac{1}{r}-\frac{1}{q}}
=c2^{-{j}\varepsilon}|B_{j+2}|^{\frac{1}{r}-\frac{1}{q}},
\end{multline*}
where $c$  depends on $\varepsilon,r,q$ only.
Therefore, 
$A_j:=\frac{2^{j\varepsilon}}{c}\alpha_j$ is a $\mathfrak{T}^q_r$ atom, for all $j\geq 1$.

On the other hand,  note that 
\begin{multline*}
\sum_{j=1}^{\infty}
\int_{\R^n}\chi_{\widehat{C_j}}(y,t)M(y,t)\,dy\frac{\chi_{{B_{j+1}}}(x)}{|B_{j+1}|}
=
\int_{\R^n}\chi_{\widehat{B_{2}}}(y,t)M(y,t)\,dy
\frac{\chi_{B_2}(x)}{|B_2|}
\\
+
\sum_{j=2}^{\infty}\left(
\int_{\R^n}\chi_{\widehat{B_{j+1}}}(y,t)M(y,t)\,dy-
\int_{\R^n}\chi_{\widehat{B_{j}}}(y,t)M(y,t)\,dy\right)
\frac{\chi_{{B_{j+1}}}(x)}{|B_{j+1}|}
\\
=
\sum_{j=1}^{\infty}
\int_{\R^n}\chi_{ \widehat{B_{j+1}}}(y,t)
M(y,t)\,dy\left(\frac{\chi_{{B_{j+1}}}(x)}{|B_{j+1}|}-\frac{\chi_{{B_{j+2}}}(x)}{|B_{j+2}|}\right).
\end{multline*}
Then, considering
$$
\beta_j(x,t):=
\int_{\R^n}\chi_{\widehat{B_{j+1}}}(y,t)
M(y,t)\,dy\left(\frac{\chi_{{B_{j+1}}}(x)}{|B_{j+1}|}-\frac{\chi_{{B_{j+2}}}(x)}{|B_{j+2}|}\right),
$$
we have that $\supp \beta_j\subset \widehat{B_{j+3}}$, and that 
$$
\int_{\R^n}\beta_j(x,t)dx=0.
$$
Besides, since, for a. e. $t>0$,
$$
\int_{\R^n}M(y,t)\,dy=0, 
$$
then, for each $j\geq 1$,
$$
\int_{\R^n}\chi_{\R^{n+1}_+\setminus \widehat{B_{j+1}}}(y,t)M(y,t)\,dy=-
\int_{\R^n}\chi_{\widehat{B_{j+1}}}(y,t)M(y,t)\,dy, \quad \textrm{for a.e. }t>0.
$$
This, together with the fact that
$$
\int_{\R^n}\chi_{\widehat{B_{j+1}}}(y,t)M(y,t)\,dy=
\chi_{(0,r_{j+1})}(t)\int_{\R^n}\chi_{\widehat{B_{j+1}}}(y,t)M(y,t)\,dy,
$$
 gives, for a.e. $t>0$,
$$
\int_{\R^n}\chi_{\widehat{B_{j+1}}}(y,t)M(y,t)\,dy=-
\chi_{(0,r_{j+1})}(t)\int_{\R^n}\chi_{\R^{n+1}_+\setminus \widehat{B_{j+1}}}(y,t)M(y,t)\,dy.
$$ 
Hence, for all $j\geq 1$,
$$
\beta_j(x,t)=
\chi_{(0,r_{j+1})}(t)\int_{\R^n}\chi_{\R^{n+1}_+\setminus \widehat{B_{j+1}}}(y,t)M(y,t)\,dy\left(\frac{\chi_{{B_{j+2}}}(x)}{|B_{j+2}|}-\frac{\chi_{{B_{j+1}}}(x)}{|B_{j+1}|}\right),\quad \textrm{for a.e. }t>0.
$$
Therefore,
\begin{multline*}
\left(\iint_{\widehat{B_{j+3}}}|\beta_j(x,t)|^r\frac{dx\,dt}{t}\right)^{\frac{1}{r}}
\lesssim 
\sum_{i\geq j+1}\frac{|B_{i+1}|}{|B_{j+1}|}
\left(\iint_{\widehat{B_{j+3}}}\chi_{B_{j+2}}(x)\left|\frac{1}{|B_{i+1}|}\int_{\R^n}\chi_{\widehat{C_i}}(y,t)
M(y,t)\,dy\right|^r\frac{dx\,dt}{t}\right)^{\frac{1}{r}}
\\
\lesssim \sum_{i\geq j+1}
 \left(\frac{|B_{i+1}|}{|B_{j+1}|}\right)^{1-\frac{1}{r}}
\left(\iint_{\widehat{C_{i}}}
|M(y,t)|^r\frac{dy\,dt}{t}\right)^{\frac{1}{r}}
\lesssim 
\sum_{i\geq j+1}2^{-(i+1)\varepsilon}
 |B_{j+1}|^{\frac{1}{r}-1}
 |B_{i+1}|^{1-\frac{1}{q}} 
 \\
 \lesssim
  |B_{j+3}|^{\frac{1}{r}-\frac{1}{q}}\sum_{i\geq j+1}2^{-(i+1)\varepsilon}
 \leq c' 2^{-j\varepsilon}
 |B_{j+3}|^{\frac{1}{r}-\frac{1}{q}},
\end{multline*}
where $c'$ depends on $\varepsilon,r,q$ only.
Hence, 
$A'_j(x,t):=\frac{2^{j\varepsilon}}{c'}\beta_j$ is
a $\mathfrak{T}^q_r$ atom. 

Therefore, we have shown that $M= \sum_{j\ge 1} c2^{-j\varepsilon} A_{j}+ \sum_{j\ge 1} c'2^{-j\varepsilon} A'_{j}$, which evidently shows that $M\in \mathfrak{T}^q_r$ with norm bounded by  $(c+c')\left(\sum_{j\geq 1}|2^{-j\varepsilon q}|^q\right)^{\frac{1}{q}}.$
\end{proof}

Let us finally show that if $\mathcal{T}$ is a Calder\'on-Zygmund operator, then  $\mathcal{T}$ applies $\mathfrak{T}^q_r$ atoms to $\mathfrak{T}^q_r$ molecules, up to a uniform constant. Note that, from the above proposition,  and an adaptation of Lemma \ref{lem:extension},  this is enough to conclude the proof.

From the part (c) of the proof, we already know that $\mathcal{T}$ applies $\mathfrak{T}^q_r$ atoms to ${T}^q_r$ molecules, up to a uniform constant.  It remains to show $\int_{\R^n}\mathcal{T}(A(\cdot,t))(x)\,dx=0$.  Note for almost every $t>0$, $A(\cdot, t)$ is a multiple of an atom in the Hardy space $H^1(\R^n)$. Indeed, its support is contained in $B$, it is in $L^r(B)$ with $r>1$ and has mean value 0. We knew that  $\mathcal{T}(A(\cdot,t))\in L^1(\R^n)$ since $\mathcal{T}(A)$ has been shown to be a $T^q_{r}$ molecule. Thus, $\int_{\R^n}\mathcal{T}(A(\cdot,t))(x)\,dx=0$ as  $\mathcal{T}^*(1)=0$. 
\section{Riesz potentials and fractional maximal functions}
For $0<\alpha <n$, consider the Riesz potential 
\begin{align*}
\mathcal{I}_{\alpha}(f)(x):=\frac{1}{\gamma(\alpha)}\int_{\R^n}\frac{1}{|x-z|^{n-\alpha}}f(z)\,dz,
\end{align*}
where $\gamma(\alpha)=\pi^{\frac{n}{2}}2^{\alpha}\Gamma(\alpha/2)/\Gamma\left(\frac{n-\alpha}{2}\right)$,  and the fractional maximal function 
\begin{align*}
\mathcal{M}_{\alpha}(f)(x)=\sup_{\tau>0}\tau^{\frac{\alpha}{n}}\dashint_{B(x,\tau)}|f(y)|dy.
\end{align*}
Note that 
\begin{align}\label{comparisonmaxriesz}
\mathcal{M}_{\alpha}(f)(x)\leq V_n^{-1}\mathcal{I}_{\alpha}(|f|)(x), \textrm{ for all }x\in \R^{n},
\end{align}
where $V_n$ is the volume of the unit ball in $\R^n$.

Consequently, it is enough to prove Theorem \ref{thm:rieszpotential} for Riesz potentials. Let us start by proving the following pointwise inequality.
\begin{lemma}\label{lemma:rieszpotential}
Let $0<\alpha<n$, $1< \vartheta<r<\infty$, and $\frac{\alpha}{n}=\frac{1}{\vartheta}-\frac{1}{r}$.  Then, for any $x\in \R^n$, $t>0$, if $f$ is locally  $\vartheta$ integrable, 
\begin{align*}
\left(\dashint_{B(x,t)}|\mathcal{I}_{\alpha}(f)(y)|^{r}dy\right)^{\frac{1}{r}}
\lesssim
t^{n\left(\frac{1}{\vartheta}-\frac{1}{r}\right)}\left(\dashint_{B(x,5t)}|f(y)|^{\vartheta}dy\right)^{\frac{1}{\vartheta}}
+
\mathcal{I}_{\alpha}\left(\dashint_{B(\cdot,t)}|f(z)|dz\right)(x).
\end{align*}
\end{lemma}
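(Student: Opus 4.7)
The plan is to mimic the structure of the earlier lemmas (Lemma for $\mathcal{M}$ and for Calder\'on--Zygmund operators): decompose $f$ into a local and a global piece with respect to the ball $B(x,5t)$, handle the local piece by $L^\vartheta\to L^r$ boundedness of $\mathcal{I}_\alpha$ (Hardy--Littlewood--Sobolev), and handle the global piece by a direct size estimate on the Riesz kernel combined with a Fubini argument that recasts it as $\mathcal{I}_\alpha$ applied to a local average. The hypotheses $1<\vartheta<r<\infty$ and $\tfrac{1}{\vartheta}-\tfrac{1}{r}=\tfrac{\alpha}{n}$ are exactly what is needed for Hardy--Littlewood--Sobolev.

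First, I would fix $x\in\R^n$, $t>0$, and write $f=f_{\mathrm{loc}}+f_{\mathrm{glob}}$, where $f_{\mathrm{loc}}:=f\chi_{B(x,5t)}$. For the local term, using $\mathcal{I}_\alpha:L^\vartheta(\R^n)\to L^r(\R^n)$ and then dividing by $|B(x,t)|^{1/r}$ to produce averages,
$$
\left(\dashint_{B(x,t)}|\mathcal{I}_\alpha(f_{\mathrm{loc}})(y)|^r dy\right)^{\!\!\frac{1}{r}}
\lesssim t^{-\frac{n}{r}}\|f_{\mathrm{loc}}\|_{L^\vartheta(\R^n)}
\lesssim t^{n(\frac{1}{\vartheta}-\frac{1}{r})}\left(\dashint_{B(x,5t)}|f(y)|^\vartheta dy\right)^{\!\!\frac{1}{\vartheta}},
$$
which is exactly the first term on the right-hand side of the claimed inequality.

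For the global term, the main observation is that for $y\in B(x,t)$ and $z$ with $|x-z|>5t$ one has $\tfrac{4}{5}|x-z|\le |y-z|\le \tfrac{6}{5}|x-z|$, hence $|y-z|^{-(n-\alpha)}\lesssim |x-z|^{-(n-\alpha)}$ uniformly in $y\in B(x,t)$. This gives the pointwise bound, uniformly in $y\in B(x,t)$,
$$
|\mathcal{I}_\alpha(f_{\mathrm{glob}})(y)|\lesssim \int_{|x-z|>5t}\frac{|f(z)|}{|x-z|^{n-\alpha}}\,dz.
$$
Next, for any $z$ with $|x-z|>5t$ and any $w\in B(z,t)$ one similarly has $|x-w|\approx |x-z|$, so $|x-z|^{-(n-\alpha)}\approx \dashint_{B(z,t)}|x-w|^{-(n-\alpha)}\,dw$. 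Plugging this in and using Fubini with the symmetry $w\in B(z,t)\Leftrightarrow z\in B(w,t)$,
$$
\int_{|x-z|>5t}\frac{|f(z)|}{|x-z|^{n-\alpha}}\,dz
\lesssim \int_{\R^n}\frac{1}{|x-w|^{n-\alpha}}\dashint_{B(w,t)}|f(z)|\,dz\,dw
\lesssim \mathcal{I}_\alpha\!\left(\dashint_{B(\cdot,t)}|f(z)|\,dz\right)\!(x),
$$
which is the second term on the right-hand side.

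Combining the local and global estimates and taking the $L^r$ average over $y\in B(x,t)$ (which is trivial for the global bound since it is already independent of $y$) yields the claimed pointwise inequality. The main technical point is the Fubini/kernel comparison for the global part; once the elementary geometric inequalities $|y-z|\approx|x-z|$ and $|x-w|\approx|x-z|$ are in place, this step is a straightforward interchange of integration.
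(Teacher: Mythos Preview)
Your proposal is correct and follows essentially the same route as the paper: the local/global split at radius $5t$, Hardy--Littlewood--Sobolev for the local piece, and the Fubini/kernel-comparison trick for the global piece are exactly what the paper does. The only cosmetic difference is the order of operations in the global estimate --- the paper first inserts $\dashint_{B(z,t)}d\xi=1$ and swaps integrals before comparing $|y-z|$ with $|x-\xi|$, whereas you first compare $|y-z|$ with $|x-z|$ and then average --- but the geometric inequalities and the resulting bound are the same.
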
 
\begin{proof}
For each $x\in \R^n$ and $t>0$, split the support of $f$ into $B(x,5t)$ and $\R^n\setminus B(x,5t)$. Then,
\begin{multline*}
\left(\dashint_{B(x,t)}|\mathcal{I}_{\alpha}(f)(y)|^{r}dy\right)^{\frac{1}{r}}
\leq
\left(\dashint_{B(x,t)}|(\mathcal{I}_{\alpha}(\chi_{B(x,5t)}f))(y)|^{r}dy\right)^{\frac{1}{r}}
\\
+
\left(\dashint_{B(x,t)}\left|\int_{|x-z|>5t}\frac{1}{|y-z|^{n-\alpha}}f(z)\,dz\right|^{r}dy\right)^{\frac{1}{r}}
=:I+II.
\end{multline*}
On the one hand, using that $\mathcal{I}_{\alpha}:L^{\vartheta}(\R^n)\rightarrow L^{r}(\R^n)$ (see \cite[Theorem 1, p.119]{Stein}), obtain that
\begin{align*}
I
\lesssim 
t^{n\left(\frac{1}{\vartheta}-\frac{1}{r}\right)}\left(\dashint_{B(x,5t)}|f(y)|^{\vartheta}dy\right)^{\frac{1}{\vartheta}}.
\end{align*} 
On the other hand, 
\begin{align*}
II
&
\lesssim 
\left(\dashint_{B(x,t)}\left(\int_{|x-z|>5t}\frac{1}{|y-z|^{n-\alpha}}|f(z)|\dashint_{B(z,t)}\,d\xi\,dz\right)^{r}dy\right)^{\frac{1}{r}}
\\
&
\lesssim 
\left(\dashint_{B(x,t)}\left(\int_{|x-\xi|>4t}\dashint_{B(\xi,t)}\frac{1}{|y-z|^{n-\alpha}}|f(z)|\,dz\,d\xi\right)^{r}dy\right)^{\frac{1}{r}}
\\
&
\lesssim 
\left(\dashint_{B(x,t)}\left(\int_{|x-\xi|>4t}\frac{1}{|x-\xi|^{n-\alpha}}\dashint_{B(\xi,t)}\left(\frac{|x-\xi|}{|y-z|}\right)^{n-\alpha}|f(z)|\,dz\,d\xi\right)^{r}dy\right)^{\frac{1}{r}}
\\
&
\lesssim 
\left(\dashint_{B(x,t)}\left(\int_{|x-\xi|>4t}\frac{1}{|x-\xi|^{n-\alpha}}\dashint_{B(\xi,t)}|f(z)|\,dz\,d\xi\right)^{r}dy\right)^{\frac{1}{r}}
\\
&
=
\int_{|x-\xi|> 4t}\frac{1}{|x-\xi|^{n-\alpha}}\dashint_{B(\xi,t)}|f(z)|\,dz\,d\xi
\leq
\mathcal{I}_{\alpha}\left(\dashint_{B(\cdot,t)}|f(z)|dz\right)(x).
\end{align*} 
\end{proof}
\subsection{Proof of Theorem \ref{thm:rieszpotential}}

Let $F\in T^p_{r}$. Taking $\vartheta=\frac{nr}{\alpha r+n}$ in Lemma \ref{lemma:rieszpotential},
we obtain that
\begin{multline*}
\|\mathcal{I}_{\alpha}(F)\|_{T^q_{r}}
\lesssim 
\left(\int_{\R^n}\left(\int_0^{\infty}t^{n\left(\frac{r}{\vartheta}-1\right)}\left(\dashint_{B(x,5t)}
|F(y,t)|^{\vartheta} dy\right)^{\frac{r}{\vartheta}}\frac{dt}{t}\right)^{\frac{q}{r}}dx\right)^{\frac{1}{q}}
\\
+
\left(\int_{\R^n}\left(\int_0^{\infty}\left(\mathcal{I}_{\alpha}
\left(\dashint_{B(\cdot,t)}|F(y,t)|dy\right)(x)\right)^{r}\frac{dt}{t}\right)^{\frac{q}{r}}dx\right)^{\frac{1}{q}}=:I+II.
\end{multline*}
Since $r>\vartheta$, applying successively Jensen's inequality,  \cite[Theorem 2.19]{Amenta} for $s_1=\frac{1}{r}-\frac{1}{\vartheta}$, $s_0=0$, $p_0=p$, $p_{1}=q$, and $q=r$, and \cite[Section 3, Proposition 4]{CoifmanMeyerStein} (we use this proposition for $r$ instead of $2$, but the proof is the same), 
\begin{multline*}
I
\lesssim
\left(\int_{\R^n}\left(\int_0^{\infty}\int_{B(x,5t)}
\left|t^{n\left(\frac{1}{\vartheta}-\frac{1}{r}\right)}F(y,t)\right|^{r}\frac{dy\,dt}{t^{n+1}}\right)^{\frac{q}{r}}dx\right)^{\frac{1}{q}}
\\
\lesssim
\left(\int_{\R^n}\left(\int_0^{\infty}\int_{B(x,5t)}
|F(y,t)|^{r}\frac{dy\,dt}{t^{n+1}}\right)^{\frac{p}{r}}dx\right)^{\frac{1}{p}}
\lesssim
\|F\|_{T^p_{r}}.
\end{multline*}
Finally, to estimate $II$, we shall proceed by extrapolation. We first recall some definitions. We say that a weight $w$ is a $A_{\tau,s}$ weight, for $1<\tau\leq s<\infty$, if it satisfies for every $B\subset \R^n$ that
$$
\left(\dashint_{B}w(x)^s\,dx\right)^{\frac{1}{s}}
\left(\dashint_{B}w(x)^{-\tau'}\,dx\right)^{\frac{1}{\tau'}}\leq C.
$$ 
Now, since $0<\alpha<n$ and $1<\vartheta<\frac{n}{\alpha}$ with  $\frac{1}{\vartheta}-\frac{1}{r}=\frac{\alpha}{n}$, by \cite[Theorem 4]{MuckenhouptWheede} for all $w\in A_{\vartheta,r}$ 
we have that $\mathcal{I}_{\alpha}:L^{\vartheta}(w^\vartheta)\rightarrow L^{r}(w^{r})$. This and Minkowski's integral inequality imply
\begin{multline*}
\left(\int_{\R^n}\int_0^{\infty}
\left|\mathcal{I}_{\alpha}\left(\dashint_{B(\cdot,t)}|F(y,t)|dy\right)(x)\right|^{r}\frac{dt}{t}w(x)^{r}dx\right)^{\frac{1}{r}}
\\
\lesssim
\left(\int_0^{\infty}\int_{\R^n}\left|\mathcal{I}_{\alpha}
\left(\dashint_{B(\cdot,t)}|F(y,t)|dy\right)(x)\right|^{r}w(x)^{r}dx\frac{dt}{t}\right)^{\frac{1}{r}}
\\
\lesssim
\left(\int_0^{\infty}\left(\int_{\R^n}\left(\dashint_{B(x,t)}|F(y,t)|dy\right)^{\vartheta}w(x)^{\vartheta}dx\right)^{\frac{r}{\vartheta}}\frac{dt}{t}\right)^{\frac{1}{r}}
\\
\lesssim
\left(\int_{\R^n}\left(\int_0^{\infty}\left(\dashint_{B(x,t)}|F(y,t)|dy\right)^{r}\frac{dt}{t}\right)^{\frac{\vartheta}{r}}w(x)^{\vartheta}dx
\right)^{\frac{1}{\vartheta}}
\\
\lesssim
\left(\int_{\R^n}\left(\int_0^{\infty}\int_{B(x,t)}|F(y,t)|^{r}\frac{dy\,dt}{t^{n+1}}\right)^{\frac{\vartheta}{r}}w(x)^{\vartheta}dx
\right)^{\frac{1}{\vartheta}}.
\end{multline*}
Then, since $1<\vartheta<r<\infty$ and $1<p<q<\infty$ with $\frac{1}{p}-\frac{1}{q}=\frac{1}{\vartheta}-\frac{1}{r}$, applying  \cite[Theorem 3.23]{CruzMartellPerez}, we have that, for all $w_0\in A_{p,q}$, and $F\in T^p_{r}$, 
\begin{multline*}
\left(\int_{\R^n}\left(\int_0^{\infty}\left(
\mathcal{I}_{\alpha}\left(\dashint_{B(\cdot,t)}|F(y,t)|dy\right)(x)\right)^{r}\frac{dt}{t}\right)^{\frac{q}{r}}w_0(x)^{q}dx\right)^{\frac{1}{q}}
\\
\lesssim
\left(\int_{\R^n}\left(\int_0^{\infty}\int_{B(x,t)}|F(y,t)|^{r}\frac{dy\,dt}{t^{n+1}}\right)^{\frac{p}{r}}w_0(x)^{p}dx
\right)^{\frac{1}{p}}.
\end{multline*}
In particular for $w_0\equiv 1$, we have that $w_0\in A_{p,q}$. Hence,
\begin{align*}
II
\lesssim
\left(\int_{\R^n}\left(\int_0^{\infty}\int_{B(x,t)}|F(y,t)|^{r}\frac{dy\,dt}{t^{n+1}}\right)^{\frac{p}{r}}dx
\right)^{\frac{1}{p}}=\|F\|_{T^p_{r}}.
\end{align*}
\qed
\section{Riesz transform}\label{sec:RT}
Consider a second order divergence form elliptic operator $L$ which is defined as
\begin{align*}
L f
=
-\div(A\,\nabla f)
\end{align*}
and
is understood in the standard weak sense as a maximal-accretive operator on $L^2(\R^n,dx)$ with domain $\mathcal{D}(L)$ by means of a
sesquilinear form, and 
where $A$ is an $n\times n$ matrix of complex and
$L^\infty$-valued coefficients defined on $\R^n$. We assume that
this matrix satisfies the following ellipticity (or \lq\lq
accretivity\rq\rq) condition: there exist
$0<\lambda\le\Lambda<\infty$ such that
$$
\lambda\,|\xi|^2
\le
\Re A(x)\,\xi\cdot\bar{\xi}
\quad\qquad\mbox{and}\qquad\quad
|A(x)\,\xi\cdot \bar{\zeta}|
\le
\Lambda\,|\xi|\,|\zeta|,
$$
for all $\xi,\zeta\in\mathbb{C}^n$ and almost every $x\in \R^n$. We have used the notation
$\xi\cdot\zeta=\xi_1\,\zeta_1+\cdots+\xi_n\,\zeta_n$ and therefore
$\xi\cdot\bar{\zeta}$ is the usual inner product in $\mathbb{C}^n$. Note
that then
$A(x)\,\xi\cdot\bar{\zeta}=\sum_{j,k}a_{j,k}(x)\,\xi_k\,\bar{\zeta_j}$.

We recall some facts regarding the operator $-L$. This operator generates a $C^0$-semigroup $\{e^{-t L}\}_{t>0}$ of contractions on $L^2(\R^n)$ which is called the heat semigroup. As in \cite{Auscher} and \cite{AuscherMartell:II}, we denote by $(p_-(L),p_+(L))$ the maximal open interval on which this semigroup $\{e^{-tL}\}_{t>0}$ is uniformly bounded on $L^p(\R^n)$:
\begin{align}\label{p-}
p_-(L) &:= \inf\left\{p\in(1,\infty): \sup_{t>0} \|e^{-tL}\|_{L^p(\R^n)\rightarrow L^p(\R^n)}< \infty\right\},
\\[4pt]
p_+(L)& := \sup\left\{p\in(1,\infty) : \sup_{t>0} \|e^{-tL}\|_{L^p(\R^n)\rightarrow L^p(\R^n)}< \infty\right\}.
\label{p+}
\end{align}
Moreover, we denote by $(q_-(L),q_+(L))$ the maximal open interval on which the gradient of the heat semigroup, i.e.  $\{t\nabla_y e^{-t^2L}\}_{t>0}$, is uniformly bounded on $L^p(\R^n)$:
\begin{align}\label{q-q+}
q_-(L) &:= \inf\left\{p\in(1,\infty): \sup_{t>0} \|t\nabla_y e^{-t^2L} \|_{L^p(\R^n)\rightarrow L^p(\R^n)}< \infty\right\},
\\[4pt]
q_+(L)& := \sup\left\{p\in(1,\infty) : \sup_{t>0} \|t\nabla_y e^{-t^2L} \|_{L^p(\R^n)\rightarrow L^p(\R^n)}< \infty\right\}.
\end{align}
From \cite{Auscher} (see also \cite{AuscherMartell:II}) we know that $p_-(L)=1$ and  $p_+(L)=\infty$ if $n=1,2$; and if $n\ge 3$ then $p_-(L)<\frac{2\,n}{n+2}$ and $p_+(L)>\frac{2\,n}{n-2}$. Moreover, $q_-(L)=p_-(L)$, $ q_+(L)\le p_+(L)$, and we always have $q_+(L)>2$, with $q_+(L)=\infty$ if $n=1$.

We shall obtain a pointwise inequality for the Riesz transform taking a generalized version of two inequalities that appear in \cite[Lemma 4.8 and (4.6)]{Auscher}. These are:
\begin{lemma}\label{lemma1:riesz}
For every ball $B$, with radius $r_B$, and $q_-(L)<r<q_{+}(L)$,
\begin{align*}
\|\nabla L^{-\frac{1}{2}}(I-e^{-r_B^2L})^Mh\|_{L^r(B)}
\leq 
|B|^{\frac{1}{r}}\sum_{j\geq 1}g(j)\left(\dashint_{2^{j+1}B}|h|^r\right)^{\frac{1}{r}},
\end{align*}
with $g(j)=C2^{j\frac{n}{2}}4^{-jM}$, where $M\in \N$ is arbitrary and $C$ depends on $M$. 
\end{lemma}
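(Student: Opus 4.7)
The plan is to decompose $h$ dyadically around $B$ and combine $L^p$-$L^q$ off-diagonal estimates for the heat semigroup with the $M$-fold cancellation carried by $(I-e^{-r_B^2L})^M$. Concretely, set $h_1=h\chi_{4B}$ and, for $j\geq 2$, $h_j=h\chi_{C_j(B)}$ with $C_j(B)=2^{j+1}B\setminus 2^jB$, so that $d(B,C_j(B))\sim 2^jr_B$. For the local piece $h_1$, both $\nabla L^{-1/2}$ (the Riesz transform, cf.\ \cite{Auscher}) and $(I-e^{-r_B^2L})^M$ are bounded on $L^r(\R^n)$ uniformly in $r_B$ in the range $q_-(L)<r<q_+(L)$, yielding $\|\nabla L^{-1/2}(I-e^{-r_B^2L})^Mh_1\|_{L^r(B)}\lesssim\|h_1\|_{L^r}\lesssim |B|^{1/r}(\dashint_{4B}|h|^r)^{1/r}$, consistent with $g(1)=C$.

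For $j\geq 2$, I start from the subordination identity
\[
\nabla L^{-1/2}(I-e^{-r_B^2L})^M=\frac{1}{\sqrt\pi}\int_0^\infty \nabla e^{-sL}(I-e^{-r_B^2L})^M\frac{ds}{\sqrt s}
\]
and split the $s$-integral at $s=r_B^2$. For $s\leq r_B^2$, the binomial expansion $(I-e^{-r_B^2L})^M=\sum_{k=0}^M(-1)^k\binom{M}{k}e^{-kr_B^2L}$ combined with the Gaffney-type estimate $\|\nabla e^{-tL}f\|_{L^2(B)}\lesssim t^{-1/2}e^{-c4^jr_B^2/t}\|f\|_{L^2(C_j(B))}$ produces a Gaussian factor yielding super-exponential decay in $j$ (much better than $4^{-jM}$) after integration.

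For $s>r_B^2$, which is the delicate regime where the $k=0$ term would otherwise diverge, I extract the cancellation via the iterated fundamental-theorem identity
\[
(I-e^{-r_B^2L})^M=\int_{[0,r_B^2]^M}L^Me^{-(s_1+\cdots+s_M)L}\,ds_1\cdots ds_M,
\]
so the integrand becomes $\int_{[0,r_B^2]^M}\nabla L^Me^{-(s+s_1+\cdots+s_M)L}h_j\,ds_1\cdots ds_M$. The composite off-diagonal bound $\|\nabla L^Me^{-tL}\|_{L^2(F)\to L^2(E)}\lesssim t^{-M-1/2}e^{-cd(E,F)^2/t}$, obtained from $\nabla e^{-tL/2}$ off-diagonal plus the $H^\infty$ calculus bound $\|(tL)^Me^{-tL/2}\|_{L^2\to L^2}\lesssim 1$, gives after integration over $(s_1,\ldots,s_M)\in[0,r_B^2]^M$ the estimate $r_B^{2M}s^{-M-1/2}e^{-c4^jr_B^2/s}\|h_j\|_{L^2(C_j(B))}$; the substitution $u=4^jr_B^2/s$ in the outer $s^{-1/2}ds$ integral over $(r_B^2,\infty)$ then produces the key factor $4^{-jM}\|h_j\|_{L^2(C_j(B))}$. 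Using $\|h_j\|_{L^2}=|C_j(B)|^{1/2}(\dashint_{C_j(B)}|h|^2)^{1/2}\sim 2^{jn/2}|B|^{1/2}(\dashint_{C_j(B)}|h|^2)^{1/2}$ yields precisely $g(j)=C2^{jn/2}4^{-jM}$ in the case $r=2$.

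To pass to general $r\in(q_-(L),q_+(L))$ while preserving the exponent $2^{jn/2}$, I replace the Gaffney $L^2$-$L^2$ off-diagonal by $L^r$-$L^2$ (for $r\leq 2$) or $L^2$-$L^r$ (for $r\geq 2$) off-diagonal estimates with the accompanying Sobolev/Nash-type exponent $t^{-n|1/r-1/2|/2}$, both of which are available in Auscher's framework; a Hölder inequality on the output ball $B$ (resp.\ on the annulus $C_j(B)$) absorbs the extra volume factors against $r_B^{n|1/r-1/2|}$, so the $2^{jn/2}$ factor survives unchanged. The main technical obstacle is the $s>r_B^2$ regime, resolved by the iterated integral representation of $(I-e^{-r_B^2L})^M$ which converts the $M$-fold cancellation into the quantitative $4^{-jM}$ decay; the fact that the bounded $H^\infty$ calculus of $L$ holds on $L^r$ in the larger range $(p_-(L),p_+(L))\supset(q_-(L),q_+(L))$ is what allows the bounds to propagate to the full claimed range of $r$.
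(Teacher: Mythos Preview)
The paper does not prove this lemma; it quotes it (together with Lemma~\ref{lemma2:riesz}) as a ``generalized version'' of \cite[Lemma 4.8 and (4.6)]{Auscher} and only remarks that the integral representation $\nabla L^{-1/2}h=\pi^{-1/2}\int_0^\infty \nabla e^{-tL}h\,\frac{dt}{\sqrt t}$ is the substitute for a kernel representation. Your outline is precisely the argument behind \cite[Lemma 4.8]{Auscher}: dyadic decomposition of $h$, the subordination integral, the split at $s=r_B^2$, binomial expansion plus Gaffney for small $s$, and the iterated integral identity $(I-e^{-r_B^2L})^M=\int_{[0,r_B^2]^M}L^Me^{-(\sum s_i)L}\,ds_1\cdots ds_M$ to extract $4^{-jM}$ for large $s$. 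So your approach coincides with the one the paper defers to.

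One small clarification on your last paragraph: for $r\le 2$ the naive bookkeeping gives a factor $4^{-j(M+\beta)}2^{jn/r}$ with $\beta=\tfrac{n}{2}(\tfrac1r-\tfrac12)$, and it is the exact cancellation $4^{-j\beta}2^{jn/r}=2^{jn/2}$ that recovers the stated $g(j)=C\,2^{jn/2}4^{-jM}$; for $r\ge 2$ one gets $4^{-j(M+\alpha)}2^{jn/2}\le 2^{jn/2}4^{-jM}$. It is worth saying this explicitly rather than ``the $2^{jn/2}$ factor survives unchanged''. Also, the $L^r$--$L^2$ and $L^2$--$L^r$ off-diagonal estimates you invoke for $\nabla L^Me^{-tL}$ are obtained by composing $e^{-tL/2}$ (which has $L^p$--$L^q$ off-diagonal on $(p_-(L),p_+(L))$) with $\nabla(tL)^Me^{-tL/2}$ (which has $L^2$--$L^q$ off-diagonal for $q<q_+(L)$); since $q_-(L)=p_-(L)$, this indeed covers the full range $q_-(L)<r<q_+(L)$.
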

\begin{lemma}\label{lemma2:riesz}
For every ball $B$, with radius $r_B$, any constant $k>0$, and $q_-(L)<p_0\leq r<q_+(L)$,
\begin{align*}
\left(\dashint_{B}|\nabla e^{-kr_B^2L} h|^{r}\right)^{\frac{1}{r}}
\leq 
\sum_{j\geq 1}g(j)\left(\dashint_{2^{j+1}B}|\nabla h|^{p_0}\right)^{\frac{1}{p_0}},
\end{align*}
with $\sum_{j\geq 1}g(j)<\infty$.
\end{lemma}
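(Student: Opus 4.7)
The plan is to follow the scheme of Lemma \ref{lemma1:riesz}: split $h$ into annular pieces around $B$ and apply the $L^{p_0}$--$L^r$ off-diagonal Gaussian estimates for $\nabla e^{-kr_B^2L}$, which are available precisely on the range $q_-(L)<p_0\le r<q_+(L)$. The twist is that $\nabla h$ (not $h$) appears on the right-hand side, which forces the insertion of a Poincar\'e step. The preliminary identity to exploit is
\[
\nabla e^{-kr_B^2L}h=\nabla e^{-kr_B^2L}(h-c_B),\qquad c_B:=(h)_{4B},
\]
valid because $e^{-tL}$ preserves constants, so $\nabla e^{-kr_B^2L}(c_B)=0$.

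Next, decompose $h-c_B=\sum_{j\ge 1}(h-c_B)\chi_{C_j(B)}$ with $C_1(B):=4B$ and $C_j(B):=2^{j+1}B\setminus 2^jB$ for $j\ge 2$. For each $j$, the off-diagonal estimate, combined with $\operatorname{dist}(B,C_j(B))\simeq 2^jr_B$ for $j\ge 2$ and standard volume bookkeeping, yields
\[
\left(\dashint_B|\nabla e^{-kr_B^2L}((h-c_B)\chi_{C_j(B)})|^r\right)^{\frac{1}{r}}\lesssim 2^{jn/p_0}\,r_B^{-1}\,e^{-c\,4^j}\left(\dashint_{C_j(B)}|h-c_B|^{p_0}\right)^{\frac{1}{p_0}}.
\]

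To convert the right-hand side into a quantity involving $\nabla h$, apply the $L^{p_0}$ Poincar\'e--Wirtinger inequality on $2^{j+1}B$ (which contains $C_j(B)$ with a bounded volume ratio), so that the oscillation of $h$ around $(h)_{2^{j+1}B}$ is controlled by $2^{j}r_B\bigl(\dashint_{2^{j+1}B}|\nabla h|^{p_0}\bigr)^{1/p_0}$. The gap between $(h)_{2^{j+1}B}$ and $c_B=(h)_{4B}$ is handled by a telescoping sum $\sum_{k=2}^{j}\bigl|(h)_{2^{k+1}B}-(h)_{2^{k}B}\bigr|$, each term being $\lesssim 2^{k}r_B\bigl(\dashint_{2^{k+1}B}|\nabla h|^{p_0}\bigr)^{1/p_0}$ by the same Poincar\'e argument. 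The net bound is
\[
\left(\dashint_{C_j(B)}|h-c_B|^{p_0}\right)^{\frac{1}{p_0}}\lesssim \sum_{k=1}^{j}2^{k}r_B\left(\dashint_{2^{k+1}B}|\nabla h|^{p_0}\right)^{\frac{1}{p_0}}.
\]

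Inserting this into the annular estimate, the factor $r_B^{-1}$ is absorbed by the $r_B$ from Poincar\'e; swapping the order of summation produces the desired form $\sum_{k\ge 1}g(k)\bigl(\dashint_{2^{k+1}B}|\nabla h|^{p_0}\bigr)^{1/p_0}$ with $g(k)\lesssim 2^{k}\sum_{j\ge k}2^{jn/p_0}e^{-c\,4^j}\lesssim 2^{k}e^{-c'\,4^{k}}$, which is plainly summable. The only delicate point is the telescoping bookkeeping and the reindexing of the double sum; however, the super-geometric Gaussian decay $e^{-c\,4^j}$ absolutely dominates the polynomial-in-$j$ losses, so summability is never in doubt.
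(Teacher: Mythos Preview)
The paper does not prove this lemma; it quotes it (together with Lemma~\ref{lemma1:riesz}) as a ``generalized version'' of \cite[Lemma 4.8 and (4.6)]{Auscher} and uses it as a black box. Your argument is correct and is essentially the one carried out in that reference: subtract a constant using the conservation property $e^{-tL}1=1$, apply the $L^{p_0}$--$L^r$ Gaussian off-diagonal bounds for $\sqrt{t}\,\nabla e^{-tL}$ on the annuli $C_j(B)$ (available exactly on $q_-(L)<p_0\le r<q_+(L)$), and then convert $|h-c_B|$ into $|\nabla h|$ via Poincar\'e plus telescoping of the means $(h)_{2^{k+1}B}$. The Gaussian factor $e^{-c4^j}$ swallows all polynomial losses, so the double sum rearranges into the stated form with $\sum_j g(j)<\infty$.

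One point deserves a word of caution: the identity $\nabla e^{-tL}(c_B)=0$ relies on $e^{-tL}1=1$, which for complex-coefficient $L$ is not automatic (constants are not in $L^2$); it is however established in \cite{Auscher} from the $L^2$--$L^2$ off-diagonal estimates, so your appeal to it is legitimate in this setting.
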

It is in the first inequality that was used the integral representation $\nabla L^{-\frac{1}{2}}h = \pi^{-1/2}\int_{0}^\infty \nabla e^{-tL}h \frac{dt}{\sqrt t}$ for appropriate $h$ (to replace for  the kernel representation in the case of Calder\'on-Zygmund operators).

From these two results we have the following corollary.

\begin{corollary}\label{cor:riesz} Let $q_-(L)<p_0< r<q_+(L)$.
For every $x\in \R^n$ and $t>0$ and $f\in L^r(\R^n)$. 
\begin{multline*}
\left(\dashint_{B(x,t)}|\nabla L^{-\frac{1}{2}}(f)(y)|^r\,dy\right)^{\frac{1}{r}}
\\
\lesssim 
\sum_{j\geq 1}4^{-jM}\left(\int_{B(x,2^{j+1}t)}|f(y)|^r\,\frac{dy}{t^n}\right)^{\frac{1}{r}}
+
\sum_{k=1}^{M}C_{k,M}
\mathcal{M}_{p_0}\left(\nabla L^{-\frac{1}{2}}e^{-\frac{kt^2}{2}L}(f)\right)(x),
\end{multline*}
where $M\in \N$ is arbitrarily large and $\mathcal{M}_{p_0}(f):=(\mathcal{M}(|f|^{p_0}))^{\frac{1}{p_0}}$.
\end{corollary}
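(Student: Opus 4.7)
The strategy is to exploit the binomial identity
\begin{equation*}
I = (I - e^{-t^2 L})^M + \sum_{k=1}^M \binom{M}{k}(-1)^{k+1} e^{-k t^2 L},
\end{equation*}
coming from $(I - e^{-t^2 L})^M = \sum_{k=0}^M \binom{M}{k}(-1)^k e^{-kt^2 L}$. Applying $\nabla L^{-\frac{1}{2}}$ to $f$ and taking the $L^r$ average over $B(x,t)$ splits the Riesz transform into a piece $\nabla L^{-\frac{1}{2}}(I - e^{-t^2L})^M f$ that vanishes to high order at $t=0$, to be handled by Lemma \ref{lemma1:riesz}, and a finite sum of semigroup-regularized pieces $\nabla L^{-\frac{1}{2}} e^{-k t^2 L} f$, to be handled by Lemma \ref{lemma2:riesz}, each with ball radius $r_B = t$.

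For the first piece, apply Lemma \ref{lemma1:riesz} with $B = B(x,t)$ and $h = f$ and divide by $|B|^{\frac{1}{r}}$ to get
\begin{equation*}
\left(\dashint_{B(x,t)} |\nabla L^{-\frac{1}{2}}(I - e^{-t^2 L})^M f|^r dy\right)^{\frac{1}{r}} \lesssim \sum_{j\geq 1} 2^{j \frac{n}{2}} 4^{-jM}\left(\dashint_{2^{j+1} B(x,t)} |f|^r dy\right)^{\frac{1}{r}}.
\end{equation*}
Switching from the normalized average on $2^{j+1}B(x,t)$ to the un-normalized norm $\bigl(\int_{B(x,2^{j+1}t)} |f|^r\, dy/t^n\bigr)^{1/r}$ that appears in the statement of the corollary introduces an additional $\simeq 2^{-jn/r}$ factor. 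Since $M$ in Lemma \ref{lemma1:riesz} is arbitrary, taking it large enough absorbs the polynomial factor $2^{j(n/2 - n/r)}$ into a cleaner $4^{-jM'}$ with $M'$ still arbitrarily large; relabeling $M'$ as $M$ yields the first sum in the corollary.

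For each $k \in \{1, \dots, M\}$ in the second piece, use the functional calculus to split $e^{-k t^2 L} = e^{-\frac{k t^2}{2} L}\cdot e^{-\frac{k t^2}{2} L}$ and the commutation of $L^{-1/2}$ with the semigroup, giving
\begin{equation*}
\nabla L^{-\frac{1}{2}} e^{-k t^2 L} f = \nabla e^{-\frac{k}{2} t^2 L}\bigl(L^{-\frac{1}{2}} e^{-\frac{k t^2}{2} L} f\bigr).
\end{equation*}
Setting $h_k := L^{-\frac{1}{2}} e^{-\frac{k t^2}{2} L} f$ and applying Lemma \ref{lemma2:riesz} with $B = B(x,t)$ (so $r_B = t$) and the positive constant $k/2$ in place of its $k$, we obtain
\begin{equation*}
\left(\dashint_{B(x,t)} |\nabla L^{-\frac{1}{2}} e^{-k t^2 L} f|^r dy\right)^{\frac{1}{r}} \leq \sum_{j \geq 1} g(j)\left(\dashint_{2^{j+1} B(x,t)} |\nabla L^{-\frac{1}{2}} e^{-\frac{k t^2}{2} L} f|^{p_0} dy\right)^{\frac{1}{p_0}}.
\end{equation*}
Each right-hand average is manifestly dominated by $\mathcal{M}_{p_0}\bigl(\nabla L^{-\frac{1}{2}} e^{-\frac{k t^2}{2} L} f\bigr)(x)$; summing the convergent series in $j$ and folding in $\binom{M}{k}$ produces a term with constant $C_{k,M}$. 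Adding this to the bound for the first piece completes the plan.

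The only delicate point is the algebraic bookkeeping of the semigroup parameters: the half-splitting $e^{-k t^2 L} = e^{-\frac{k t^2}{2}L}\cdot e^{-\frac{k t^2}{2}L}$ is precisely what makes Lemma \ref{lemma2:riesz} output $\nabla L^{-\frac{1}{2}} e^{-\frac{k t^2}{2} L} f$ on its right-hand side, which is exactly the expression that the corollary requires inside $\mathcal{M}_{p_0}$. Verifying that $h_k$ lies in the appropriate domain so that Lemma \ref{lemma2:riesz} applies is standard for $f \in L^r(\R^n)$ with $q_-(L) < r < q_+(L)$.
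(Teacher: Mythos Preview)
Your proof is correct and follows essentially the same approach as the paper's: the same binomial splitting $I=(I-e^{-t^2L})^M+A_{t,M}$, the same application of Lemma~\ref{lemma1:riesz} to the first piece, and the same half-splitting $e^{-kt^2L}=e^{-\frac{kt^2}{2}L}e^{-\frac{kt^2}{2}L}$ with Lemma~\ref{lemma2:riesz} applied to $h=L^{-\frac12}e^{-\frac{kt^2}{2}L}f$ for the second. Your explicit bookkeeping of the factor $2^{j(n/2-n/r)}$ and its absorption into $4^{-jM}$ by relabeling $M$ is a point the paper passes over silently, but the argument is identical.
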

\begin{proof}
Fix $x\in \R^n$, $t>0$ and  $M\in \N$ arbitrarily large. We have that
\begin{multline*}
\left(\dashint_{B(x,t)}|\nabla L^{-\frac{1}{2}}(f)(y)|^r\,dy\right)^{\frac{1}{r}}
\\
\lesssim 
\left(\dashint_{B(x,t)}|\nabla L^{-\frac{1}{2}}(I-e^{-t^2L})^M(f)(y)|^r\,dy\right)^{\frac{1}{r}}
+
\left(\dashint_{B(x,t)}|\nabla L^{-\frac{1}{2}}A_{t,M}(f)(y)|^r\,dy\right)^{\frac{1}{r}}
=:I+II,
\end{multline*}
 where $A_{t,M}:=I-(I-e^{-t^2L})^M$. Then, applying Lemma \ref{lemma1:riesz} for $B=B(x,t)$ and $h=f$, we obtain that
\begin{align*}
I
\lesssim
\sum_{j\geq 1}4^{-jM}
\left(\int_{B(x,2^{j+1}t)}|f(y)|^r\frac{dy}{t^n}\right)^{\frac{1}{r}}.
\end{align*}
As for the estimate of $II$, note that expanding the binomial expression, we have that $A_{t,M}=\sum_{k=1}^{M}C_{k,M}e^{-kt^2L}$. Then,
 applying Lemma \ref{lemma2:riesz} for $B=B(x,t)$ and $h=L^{-\frac{1}{2}}e^{-\frac{kt^2}{2}L}f$, 
\begin{multline*}
II\lesssim \sum_{k=1}^{M}C_{k,M}\left(\dashint_{B(x,t)}|\nabla e^{-\frac{kt^2L}{2}} L^{-\frac{1}{2}}e^{-\frac{kt^2L}{2}}(f)(y)|^r\,dy\right)^{\frac{1}{r}}
\\
\lesssim
\sum_{k=1}^{M}C_{k,M}
\sum_{j\geq 1}g(j)
\left(\dashint_{B(x,2^{j+1}t)}|\nabla L^{-\frac{1}{2}}e^{-\frac{kt^2}{2}L}(f)(y)|^{p_0}dy\right)^{\frac{1}{p_0}}
\\
\lesssim
\sum_{k=1}^{M}C_{k,M}
\mathcal{M}_{p_0}\left(\nabla L^{-\frac{1}{2}}e^{-\frac{kt^2}{2}L}(f)\right)(x).
\end{multline*}  
\end{proof}
\subsection{Proof of Theorem \ref{thm:riesztransform}}
Recall that  the Riesz transform associated with this operator $L$, acting over a function $F\in T^r_r$ (so that $F(\cdot, t)\in L^r(\R^n)$ for almost every $t>0$), is defined by 
$\nabla L^{-\frac{1}{2}}(F(\cdot,t))(x)$
for almost every $t>0$. 
Applying Corollary \ref{cor:riesz}, we obtain, for all $F\in T^r_r$, \begin{multline*}
\|\nabla L^{-\frac{1}{2}}(F)\|_{T^q_r}
\lesssim
\sum_{j\geq 1}4^{-jM}\left(\int_{\R^n}\left(\int_0^{\infty}\int_{B(x,2^{j+1}t)}|F(y,t)|^r\frac{dy\,dt}{t^{n+1}}\right)^{\frac{q}{r}}dx\right)^{\frac{1}{q}} 
\\
+\sum_{k=1}^{M}C_{k,M}
\left(\int_{\R^n}\left(\int_0^{\infty}\left|\mathcal{M}_{p_0}\left(\nabla L^{-\frac{1}{2}}e^{-\frac{kt^2}{2}L}(F(\cdot,t))\right)(x)\right|^r\frac{dt}{t}\right)^{\frac{q}{r}}dx\right)^{\frac{1}{q}} =:I+\sum_{k=0}^{M}C_{k,M}II.
\end{multline*}
Applying \cite[Section 3, Proposition 4]{CoifmanMeyerStein} or \cite{Auscherangles}, but taking $r$ in place of $2$ (the proof is the same), and  taking $M>\frac{n}{\min\{q,r\}}$, we have that
\begin{align*} 
I\lesssim 
\sum_{j\geq 1}4^{-j\left(M-\frac{n}{\min\{q,r\}}\right)}\left(\int_{\R^n}\left(\int_0^{\infty}\int_{B(x,t)}|F(y,t)|^r\frac{dy\,dt}{t^{n+1}}\right)^{\frac{q}{r}}dx\right)^{\frac{1}{q}}
\lesssim \|F\|_{T^q_r}.
\end{align*}
Finally the estimate of $II$ follows by extrapolation. For all weights $w\in A_{\frac{r}{q_-(L)}}\cap RH_{\left(\frac{q_+(L)}{r}\right)'}$ we have that $\nabla L^{-\frac{1}{2}}:L^r(w)\rightarrow L^r(w)$ (\cite[Theorem 5.2]{AuscherMartell:III}) and that $\mathcal{M}_{p_0}:L^r(w)\rightarrow L^r(w)$, for some $p_0>q_-(L)$ 
close enough to $q_-(L)$ so that $w\in A_{\frac{r}{p_0}}$. Besides, we  can also take $r<q_0<q_+(L)$ so that 
$w\in RH_{\left(\frac{q_0}{r}\right)'}$. Using these three facts, applying H\"older's inequality for $\frac{q_0}{r}$, the $L^r(\R^n)-L^{q_0}(\R^n)$ off-diagonal estimates that the semigroup $\{e^{-t^2L}\}_{t>0}$ satisfies (see \cite{Auscher}), and Fubini's theorem, we have that
\begin{align*}
\Bigg(\int_{\R^n}&\int_0^{\infty}
|\mathcal{M}_{p_0}(\nabla L^{-\frac{1}{2}}e^{-\frac{kt^2}{2}L}(F(\cdot,t)))(x)|^{r}
\frac{dt}{t}w(x)dx\Bigg)^{\frac{1}{r}}
\\
&
\lesssim
\left(\int_0^{\infty}\int_{\R^n}
|e^{-\frac{kt^2}{2}L}(F(\cdot,t))(x)|^{r}w(x)dx
\frac{dt}{t}\right)^{\frac{1}{r}}
\\
&
=
\left(\int_0^{\infty}\int_{\R^n}
|e^{-\frac{kt^2}{2}L}(F(\cdot,t))(x)|^{r}
\int_{B(x,t)}w(y)\frac{dy}{w(B(x,t))}
w(x)dx
\frac{dt}{t}\right)^{\frac{1}{r}}
\\
&
=
\left(\int_{\R^n}\int_0^{\infty}\int_{B(y,t)}
|e^{-\frac{kt^2}{2}L}(F(\cdot,t))(x)|^{r}
w(x)\frac{dx}{w(B(x,t))}
\frac{dt}{t}w(y)dy\right)^{\frac{1}{r}}
\\
&
\lesssim
\left(\int_{\R^n}\int_0^{\infty}\int_{B(y,t)}
|e^{-\frac{kt^2}{2}L}(F(\cdot,t))(x)|^{r}
w(x)dx
\frac{dt}{tw(B(y,t))}w(y)dy\right)^{\frac{1}{r}}
\\
&
\lesssim
\left(\int_{\R^n}\int_0^{\infty}\left(\int_{B(y,t)}
|e^{-\frac{kt^2}{2}L}(F(\cdot,t)(x)|^{q_0}
dx\right)^{\frac{r}{q_0}}
\left(\int_{B(y,t)}w(x)^{\left(\frac{q_0}{r}\right)'}dx\right)^{\frac{q_0-r}{q_0}}
\frac{dt}{tw(B(y,t))}w(y)dy\right)^{\frac{1}{r}}
\\
&
\lesssim
\sum_{j\geq 1}e^{-c4^{j}}
\left(\int_{\R^n}\int_0^{\infty}\int_{B(y,2^{j+1}t)}
|F(x,t)|^{r}
dx 
\left(\dashint_{B(y,t)}w(x)^{\left(\frac{q_0}{r}\right)'}dx\right)^{\frac{q_0-r}{q_0}}
\frac{dt}{tw(B(y,t))}w(y)dy\right)^{\frac{1}{r}}
\\
&
\lesssim
\sum_{j\geq 1}e^{-c4^{j}}
\left(\int_{\R^n}\int_0^{\infty}\int_{B(y,2^{j+1}t)}
|F(x,t)|^{r}
\frac{dx\,dt}{t^{n+1}}w(y)dy\right)^{\frac{1}{r}}
\\
&
\lesssim
\left(\int_{\R^n}\int_0^{\infty}\int_{B(y,t)}
|F(x,t)|^{r}
\frac{dx\,dt}{t^{n+1}}w(y)dy\right)^{\frac{1}{r}}.
\end{align*}
The second inequality follows from the fact that $B(y,t)\subset B(x,2t)$ if $x\in B(y,t)$ and from the doubling property of the weight. Then,
$w(B(y,t))\leq w(B(x,2t)\leq 2^{nc_w}w(B(x,t))$.

Therefore, we have that, for all $w\in A_{\frac{r}{q_-(L)}}\cap RH_{\left(\frac{q_+(L)}{r}\right)'}$ and $F\in T^r_{r}$, 
\begin{align*}
\int_{\R^n}\left(\int_0^{\infty}
|\mathcal{M}_{p_0}(\nabla L^{-\frac{1}{2}}e^{-\frac{kt^2}{2}L}(F(\cdot,t)))(x)|^{r}
\frac{dt}{t}\right)^{\frac{r}{r}}w(x)dx
\lesssim
\int_{\R^n}\left(\int_0^{\infty}\int_{B(x,t)}
|F(y,t)|^{r}
\frac{dy\,dt}{t^{n+1}}\right)^{\frac{r}{r}}w(x)dx.
\end{align*}
Recall that $p_{0}$ depended on $w$. But if we now fix $p_{0}>q_{-}(L)$, we have this inequality for  all $w\in A_{\frac{r}{p_{0}}}\cap RH_{\left(\frac{q_+(L)}{r}\right)'}$.
Then, applying \cite[Theorem 3.31]{CruzMartellPerez}, we obtain that, for all $p_{0}<q<q_+(L)$, $w_0\in A_{\frac{q}{p_{0}}}\cap RH_{\left(\frac{q_+(L)}{q}\right)'}$, and all $F\in T^r_{r}$,
\begin{align*}
\int_{\R^n}\left(\int_0^{\infty}
|\mathcal{M}_{p_0}(\nabla L^{-\frac{1}{2}}e^{-\frac{kt^2}{2}L}(F(\cdot,t)))(x)|^{r}
\frac{dt}{t}\right)^{\frac{q}{r}}w_0(x)dx
\lesssim
\int_{\R^n}\left(\int_0^{\infty}\int_{B(x,t)}
|F(y,t)|^{r}
\frac{dy\,dt}{t^{n+1}}\right)^{\frac{q}{r}}w_0(x)dx.
\end{align*}
In particular, if we take $w_0\equiv 1$, we have that $w_0\in A_{\frac{q}{p_{0}}}\cap RH_{\left(\frac{q_+(L)}{q}\right)'}$. Then, for all $p_{0}<r,q<q_+(L)$ and $1\leq k\leq M$, we finally conclude that
$$
II
\lesssim \|F\|_{T^q_r}.
$$
In conclusion, we obtain $\|\nabla L^{-\frac{1}{2}}(F)\|_{T^q_r} \lesssim  \|F\|_{T^q_r}$ for all 
$p_{0}<r,q<q_+(L)$ and all $q_{-}(L)<p_{0}<q_{+}(L)$, and for all $F\in T^r_{r}$. The density of $T^r_{r}\cap T^q_{r}$ in $T^q_{r}$  finishes the proof. 
\qed

\section{Amalgam spaces and generalization}\label{section:amalgam}

Amalgam spaces were first defined by 
Norbert Wiener in 1926, in the formulation of his generalized harmonic analysis. Although,  he considered the particular cases $W(L^1,\ell^2)$ and $W(L^2,L^{\infty})$ in \cite{Wiener}, and $W(L^1,L^{\infty})$ and $W(L^{\infty}, L^1)$ in \cite{Wiener2},
in general, 
for $1\leq p,q<\infty$, the amalgam space $W(L^p,L^q)$ is defined as 
$$
W(L^p,L^q):=\left\{f\in L^{p}_{loc}(\R):\left(\sum_{n\in \Z}\left(\int_{n}^{n+1}|f(t)|^pdt\right)^{\frac{q}{p}}\right)^{\frac{1}{q}}<\infty\right\}.
$$
A significant difference in considering amalgam spaces instead of $L^p$ spaces is that 
amalgam spaces give information about the local, $L^q$, and global, $L^p$, properties of the functions, while $L^p$ spaces do not make that distinction.
 
A generalization of the definition of amalgam spaces for Banach function spaces was done by Feichtinger (see for instance \cite{Feichtinger} and \cite{Feichtinger2}). For $B$ and $C$ Banach function spaces on a locally compact group $G$, satisfying certain conditions, he defined  spaces $W(B,C)$ of distributions. The important thing is that we have equivalence of continuous and discrete norms on those spaces. This has been an important tool in applications. We refer to \cite{Heil} for a deeper discussion on amalgam spaces in the real line. 

 Going on in the historical background of amalgam spaces, we highlight the paper of Holland, \cite{Holland}, that appears to be the first methodical study on amalgam spaces. After that there were important studies on amalgam spaces, for example, by Bertrandias, Datry, and Dupuis \cite{BertrandiasDatryDupuis}, Stewart \cite{Stewart}, and Busby and Smith \cite{BusbySmith}.
For a complete survey on amalgam spaces see \cite{FournierStewart}. 

A natural definition of amalgam spaces in dimension $n\geq 2$ is
$$
(L^p,L^q)(\R^n):=\left\{f\in L^{p}_{loc}(\R^n):\left(\int_{\R^n}\|\chi_{B(x,1)}f\|_{L^p}^q\,dy\right)^{\frac{1}{q}}<\infty\right\}.
$$
Beside, for $1\leq \alpha\leq \infty$,  the subspace $(L^p,L^q)^{\alpha}(\R^n)$ of $(L^p,L^q)(\R^n)$ is defined in \cite{Fofana} by
$$
(L^p,L^q)^{\alpha}(\R^n):=\left\{f\in L^{p}_{loc}(\R^n):\|f\|_{(L^p,L^q)^{\alpha}(\R^n)}<\infty\right\},
$$
where
$$
\|f\|_{(L^p,L^q)^{\alpha}(\R^n)}:=\sup_{r>0}
\left(\int_{\R^n}\left(|B(y,r)|^{\frac{1}{\alpha}-\frac{1}{p}-\frac{1}{q}}\|\chi_{B(x,r)}f\|_{L^p}\right)^q\,dy\right)^{\frac{1}{q}}.
$$
In \cite{AuscherMourgoglou} retracts of tent spaces called slice-spaces, are used. It turns out that they are closely related with amalgam spaces. Let us generalize their definition. For each $t>0$ and $0<p,r<\infty$, the slice-space $(E^p_{r})_{t}$ is defined as the following set:
$$
(E^p_{r})_{t}:=\left\{f\in L^{r}_{loc}(\R^n):\left(\dashint_{B(x,t)}|f(y)|^rdy\right)^{\frac{1}{r}} \in L^p(\R^n)\right\} 
$$
with  
$$
\|f\|_{(E^p_{r})_{t}}= \left(\left(\dashint_{B(x,t)}|f(y)|^rdy\right)^{\frac{p}{r}} \, dx\right)^{\frac{1}{p}}.
$$
Besides, consider the weak slice-spaces 
$$
(wE_r^{p})_{t}:=\left\{f\in L^{r}_{loc}(\R^n):\left(\dashint_{B(x,t)}|f(y)|^rdy\right)^{\frac{1}{r}} \in L^{p,\infty}(\R^n)\right\}.
$$
with 
$$
\|f\|_{(wE^p_{r})_{t}}= \left\|\left(\dashint_{B(x,t)}|f(y)|^rdy\right)^{\frac{p}{r}} \, dx\right\|_{L^{p,\infty}(\R^n)}.
$$
For $1\le r,p<\infty$, note that, for $n=1$, $(E_r^p)_1=W(L^r,L^p)$, and for $n\geq 2$,  $(E_r^p)_1=(L^r,L^p)(\R^n)$. Furthermore, for $p\in [r,\infty)$, since 
$
\|f\|_{(E^p_r)_t}\leq \|f\|_{L^p}$, for all  $t>0$,
$(E_r^p)_t=(L^r,L^p)^p(\R^n)$.

Boundedness on amalgam spaces of the Hardy Littlewood maximal operator,
of Calder\'on-Zygmund operators, of maximal fractional operators, Riesz potentials, etc,  has been studied. See for instance \cite{CLHH}, \cite{CowlingMP}, \cite{FeutoFK}, \cite{Kikuchi}.
From Lemmas \ref{lemma:C_Z}, \ref{lemma:H-Lmaximal} and \ref{lemma:rieszpotential}, and Corollary \ref{cor:riesz}, we can obtain easily boundedness of those operators on slice-spaces, and, therefore,  on amalgam spaces. This  significantly simplifies the previous proofs on this issue.

Let $1\le r<\infty$. Let $t>0$. Consider the applications $i_{t}$ and $\pi_{t}$ in \cite{AuscherMourgoglou}: for $f:\R^n\rightarrow \C$, 
$$
i_{t}(f)(x,s)=f(x)\chi_{[t,et]}(s),
$$
and for $G:\R^{n+1}_+\rightarrow \C$,
$$
\pi_{t}(G)(x)=\int_{t}^{et}G(x,s)\frac{ds}{s}.
$$
It is easy to see that 
$$\pi_{t}\circ i_{t}(f)=f.$$
\begin{lemma} Let $0<p<\infty$ and $1\le r<\infty$. Then $i_{t}:(E_r^p)_t\rightarrow T_r^p$ and $\pi_{t}:T_r^p\rightarrow (E_r^p)_t$ are bounded with the norms being uniform with respect to $t$. In particular, the slice-spaces  $(E_r^p)_t$ are retracts of $T_r^p$. The same happens for the weak slice-spaces, they are retracts of the weak tent spaces. 
\end{lemma}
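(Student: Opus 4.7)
Since $\pi_t\circ i_t=\mathrm{Id}$ is already noted in the excerpt, the retract claim reduces to establishing $t$-uniform boundedness of $\pi_t$ and $i_t$ separately. I will get each half from a pointwise inequality closed off by a slice-space aperture-change estimate.

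For $\pi_t$: exploiting $\int_t^{et}\tfrac{ds}{s}=1$ and $r\ge 1$, H\"older's inequality gives $|\pi_t(G)(x)|^r\le \int_t^{et}|G(x,s)|^r\tfrac{ds}{s}$. Averaging over $B(x,t)$ and using $B(x,t)\subset B(x,s)$ with volume ratio $|B(x,s)|/|B(x,t)|\le e^n$ for $s\in[t,et]$,
\begin{equation*}
\dashint_{B(x,t)}|\pi_t(G)(y)|^r\,dy\le e^n\int_t^{et}\dashint_{B(x,s)}|G(y,s)|^r\,dy\,\frac{ds}{s}\le C_n\,\mathcal{A}_r(G)(x)^r.
\end{equation*}
Extracting the $r$-th root and taking the $L^p(\R^n)$ (resp.\ $L^{p,\infty}(\R^n)$) norm settles $\pi_t$ with a constant depending only on $n,r$.

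For $i_t$: a direct computation using $B(x,s)\subset B(x,et)$ for $s\in[t,et]$ and $\int_t^{et}s^{-n-1}\,ds=\frac{1-e^{-n}}{nt^n}$ yields
\begin{equation*}
\mathcal{A}_r(i_t(f))(x)^r=\int_t^{et}s^{-n-1}\int_{B(x,s)}|f(y)|^r\,dy\,ds\le C_n\,\dashint_{B(x,et)}|f|^r,
\end{equation*}
with $C_n$ independent of $t$. It remains to show the slice-space aperture-change bound
\begin{equation*}
\bigl\|(\dashint_{B(\cdot,et)}|f|^r)^{1/r}\bigr\|_{L^p(\R^n)}\lesssim \|f\|_{(E_r^p)_t},
\end{equation*}
uniformly in $t$. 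Fix vectors $v_1,\ldots,v_N\in\R^n$ with $B(0,e)\subset\bigcup_{j=1}^N B(v_j,1)$; by scaling $B(x,et)\subset\bigcup_{j=1}^N B(x+v_jt,t)$, whence pointwise
\begin{equation*}
\dashint_{B(x,et)}|f|^r\le e^{-n}\sum_{j=1}^N \dashint_{B(x+v_jt,t)}|f|^r.
\end{equation*}
Taking the $L^{p/r}$ quasi-norm in $x$ and invoking translation invariance of Lebesgue measure on each summand reduces the right side to $N$ copies of $\|(\dashint_{B(\cdot,t)}|f|^r)^{1/r}\|_{L^p}^r$: the triangle inequality covers $p\ge r$, while for $p<r$ one uses the subadditivity $(a_1+\cdots+a_N)^{p/r}\le\sum_j a_j^{p/r}$.

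The weak analogue is verbatim, with $L^{p,\infty}$ in place of $L^p$: the pointwise estimates are the same, and one uses the quasi-triangle inequality for $L^{p,\infty}$ together with its translation invariance. The one step I expect to require care is the aperture change in the non-Banach regime $p<r$, since one cannot route through the Hardy--Littlewood maximal function (which fails to be bounded on $L^{p/r}$ when $p/r\le 1$); the pointwise-translation recipe above is what keeps the constant uniform in $t$ across all regimes.
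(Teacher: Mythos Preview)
Your proof is correct and follows the same two-step scheme as the paper: establish the pointwise inequalities $\bigl(\dashint_{B(x,t)}|\pi_t(G)|^r\bigr)^{1/r}\le C\,\mathcal{A}_r(G)(x)$ and $\mathcal{A}_r(i_t(f))(x)\le C\bigl(\dashint_{B(x,et)}|f|^r\bigr)^{1/r}$, then close with a norm comparison between slice-spaces at scales $t$ and $et$. The only difference is that the paper defers the last step to Lemma~\ref{lemma:changeofnorms} (stated without proof), whereas you supply a self-contained covering-by-translates argument that works uniformly for all $0<p<\infty$ and also in $L^{p,\infty}$; this is a nice touch, since it sidesteps any appeal to the maximal function in the sub-Banach regime $p<r$.
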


\begin{proof}
For the slice-spaces when $r=2$, this is observed without proof in  \cite{AuscherMourgoglou}. The proof is the same for all (weak) slice-spaces. It suffices to note that 
$$
\left(\dashint_{B(x,t)}|\pi_{t}(G)(y)|^rdy\right)^{\frac{1}{r}}  \le C \mathcal{A}_r(G)(x)
$$
and 
$$\mathcal{A}_r(i_{t}(f))(x) \le C \left(\dashint_{B(x,et)}|f(y)|^rdy\right)^{\frac{1}{r}}$$
for some dimensional constants $C$,
and to use the norm comparison below for the slice-spaces and similarly for the weak slice-spaces.
\end{proof}

\begin{lemma}\label{lemma:changeofnorms} 
If $0<t,s<\infty$ with $t\sim s$,  $1\le r<\infty$ and $p\in (0,\infty)$, then $(E_r^p)_t=(E_r^p)_s$ with
$$
\|f\|_{(E_r^p)_t}\sim_{n,p}\|f\|_{(E_r^p)_s}
$$
\end{lemma}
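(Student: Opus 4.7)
I plan to prove the nontrivial direction after reducing to $s \le t \le C_0 s$ with $C_0 := \max(s/t, t/s)$; the other direction $\|f\|_{(E_r^p)_s} \le (t/s)^{n/r}\|f\|_{(E_r^p)_t}$ is immediate from the pointwise bound $\dashint_{B(x,s)}|f|^r\,dy \le (t/s)^n \dashint_{B(x,t)}|f|^r\,dy$, which follows from $B(x,s) \subset B(x,t)$.

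For the hard direction $\|f\|_{(E_r^p)_t} \le C\|f\|_{(E_r^p)_s}$, I will use a cube-discretization. Set $\alpha := p/r$ and $F_{\tau}(x) := \dashint_{B(x,\tau)}|f|^r\,dy$, and fix $\epsilon \in (0,\,2(\sqrt{C_0}-1)]$ (this is possible whenever $C_0>1$, the case $C_0=1$ being trivial). Partition $\R^n$ into cubes $\{Q_j\}$ of side $\ell = \epsilon s/\sqrt{n}$ with centers $\{y_j\}$, so that $|y - y_j| \le \epsilon s/2$ for every $y \in Q_j$. Three elementary inclusions drive the estimate: (i) $B(y,t) \subset B(y_j, t+\epsilon s/2)$ for $y \in Q_j$, yielding $F_t(y) \le (1+\epsilon/2)^n F_{t+\epsilon s/2}(y_j)$; (ii) the cubes $\{Q_{j'}\}_{j'\in N(j)}$ meeting $B(y_j, t+\epsilon s/2)$ all lie in the balls $B(y_{j'}, s)$, with $|N(j)|$ bounded by some constant $K = K(n, C_0, \epsilon)$, giving $F_{t+\epsilon s/2}(y_j) \le \sum_{j'\in N(j)} F_s(y_{j'})$; and symmetrically (iii), for $y \in Q_{j'}$, $F_s(y_{j'}) \le (1+\epsilon/2)^n F_{s(1+\epsilon/2)}(y)$.

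Raising to the $\alpha$-th power (sub/super-additivity of $\tau \mapsto \tau^\alpha$ contributes at most the factor $K^{|\alpha-1|}$), integrating (i) over $Q_j$, combining with (ii), summing on $j$, and swapping summation orders via the dual bound $|\{j: j'\in N(j)\}| \le K$, one arrives at
$$
\|f\|_{(E_r^p)_t}^p = \int_{\R^n} F_t^{\alpha}\,dy \;\le\; C\sum_{j'} |Q_{j'}|\,F_s(y_{j'})^{\alpha} \;\le\; C'\,\|f\|_{(E_r^p)_{s(1+\epsilon/2)}}^p,
$$
the final inequality coming from raising (iii) to the power $\alpha$ and integrating over each $Q_{j'}$. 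This is the key computation.

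To eliminate the residual $(1+\epsilon/2)$ factor on the right, I iterate once: apply the same estimate to the pair $(t_1, s_1) = (s(1+\epsilon/2), s/(1+\epsilon/2))$, which satisfies $s_1 \le t_1 \le C_0 s_1$ precisely because $(1+\epsilon/2)^2 \le C_0$ by the choice of $\epsilon$. Since $s_1(1+\epsilon/2) = s$, this gives $\|f\|_{(E_r^p)_{s(1+\epsilon/2)}} \le C\|f\|_{(E_r^p)_s}$, and chaining yields $\|f\|_{(E_r^p)_t} \le C^2\|f\|_{(E_r^p)_s}$. The main subtlety is the range $p<r$: the naive pointwise bound $F_t \lesssim \mathcal{M}(F_s)$ is useless there since $\mathcal{M}$ is not bounded on $L^{p/r}$, but the discretization replaces $\mathcal{M}$ by a finite sum over neighbouring cube centers that works for every $\alpha>0$, at the mild cost of the small radius dilation that the single iteration absorbs.
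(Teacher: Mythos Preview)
The paper states this lemma without proof, so there is nothing to compare against; your argument must be evaluated on its own merits.

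Your proof is correct. The easy direction is immediate, and for the hard direction the cube discretization is the right idea: replacing the maximal-function bound $F_t \lesssim \mathcal{M}(F_s)$ (useless when $p<r$) by a finite sum over neighbouring cube values, together with sub/super-additivity of $\tau\mapsto\tau^\alpha$, handles all $\alpha=p/r>0$ uniformly.

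Two small remarks. First, for step (ii) you need $Q_{j'}\subset B(y_{j'},s)$, i.e.\ $\epsilon s/2\le s$; your stated range $\epsilon\in(0,2(\sqrt{C_0}-1)]$ can exceed $2$ when $C_0>4$, so you should also impose $\epsilon\le 1$ (say). This is a one-word fix.

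Second, the iteration is avoidable. If you choose the cubes small enough that $\sqrt{n}\,\ell\le s$ (e.g.\ $\epsilon\le 1$), then for \emph{every} $y\in Q_{j'}$ one has $Q_{j'}\subset B(y,s)$, hence
\[
\int_{Q_{j'}}|f|^r \;\le\; \int_{B(y,s)}|f|^r \;=\; c_n s^n\,F_s(y)\qquad\text{for all }y\in Q_{j'}.
\]
This replaces your (ii)+(iii) by the single bound
\[
F_{t+\epsilon s/2}(y_j)\;\le\;\sum_{j'\in N(j)}\inf_{y\in Q_{j'}}F_s(y)\;\le\;\sum_{j'\in N(j)}\dashint_{Q_{j'}}F_s(y)\,dy,
\]
and after raising to the power $\alpha$, integrating over $Q_j$, summing in $j$ and swapping the order of summation exactly as you do, one lands directly on $\|f\|_{(E_r^p)_t}^p\le C\|f\|_{(E_r^p)_s}^p$ with no residual dilation to iterate away. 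Your route is not wrong, just slightly longer.
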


For any  linear operator $T$ on functions on $\R^n$, if $\mathcal{T}$ is its extension to functions on $\R^{n+1}$ by tensorisation, then we have 
$T =\pi_{t} \circ \mathcal{T} \circ i_{t}$.  Hence the boundedness of $\mathcal{T}$ carries to $T$ (In the previous theorems, we used the opposite direction: boundedness of $T$ yields boundedness of $\mathcal{T}$. But it was not that immediate). This also applies to maximal operators with easy modifications. So immediate corollaries of our results on tent spaces are the followings.
\begin{proposition}
Let $\mathcal{M}$ be the centered Hardy-Littlewood maximal operator. We have, for all $1<r<\infty$,
\begin{list}{$(\theenumi)$}{\usecounter{enumi}\leftmargin=1cm \labelwidth=1cm\itemsep=0.2cm\topsep=.0cm \renewcommand{\theenumi}{\alph{enumi}}}

\item $\mathcal{M}:(E_r^p)_t\rightarrow (E_r^p)_t$ for all $1<p<\infty.$

\item $\mathcal{M}:(E_r^{1})_t\rightarrow (wE_{r}^{1})_t$\ .

\end{list}
\end{proposition}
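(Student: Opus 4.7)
The plan is to use the retract structure between slice spaces and tent spaces, together with Theorem \ref{thm:maximal}. The only subtle point is that the identity $T = \pi_{t}\circ \mathcal{T}\circ i_{t}$ stated just before the proposition is phrased for linear $T$; for sublinear $\mathcal{M}$ we must verify it separately, which is easy.

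The key observation is that, by the definition of the tensor extension $\mathcal{M}(F)(x,s):=\mathcal{M}(F(\cdot,s))(x)$, and because $i_{t}(f)(x,s) = f(x)\chi_{[t,et]}(s)$ is $f$ or $0$ depending only on $s$,
$$
\mathcal{M}(i_{t}(f))(x,s) = \mathcal{M}\bigl(f\,\chi_{[t,et]}(s)\bigr)(x) = \chi_{[t,et]}(s)\,\mathcal{M}(f)(x).
$$
Applying $\pi_{t}$ then gives
$$
\pi_{t}(\mathcal{M}(i_{t}(f)))(x) = \int_{t}^{et} \chi_{[t,et]}(s)\,\mathcal{M}(f)(x)\,\frac{ds}{s} = \mathcal{M}(f)(x),
$$
so the retract identity $\mathcal{M} = \pi_{t}\circ \mathcal{M}\circ i_{t}$ holds pointwise.

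Given this identity, part $(a)$ follows immediately: for $f\in (E^p_{r})_{t}$ with $1<p,r<\infty$, the lemma before the proposition yields the uniform bounds $\|i_{t}(f)\|_{T^p_{r}} \lesssim \|f\|_{(E^p_{r})_{t}}$ and $\|\pi_{t}(G)\|_{(E^p_{r})_{t}} \lesssim \|G\|_{T^p_{r}}$, and Theorem \ref{thm:maximal}(a) gives $\mathcal{M}:T^p_{r}\to T^p_{r}$, so chaining these three estimates produces $\|\mathcal{M}(f)\|_{(E^p_{r})_{t}}\lesssim \|f\|_{(E^p_{r})_{t}}$. Part $(b)$ is proved identically, replacing the middle step by Theorem \ref{thm:maximal}(b) and using the weak version of the retract bound for $\pi_{t}$ from $T^1_{r}$ to $(wE^1_{r})_{t}$ stated in the same lemma.

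I do not foresee any real obstacle; the only content beyond citing the retract lemma and Theorem \ref{thm:maximal} is the pointwise factorisation of the $s$-cutoff through $\mathcal{M}$, which is trivial because $\chi_{[t,et]}(s)$ depends only on $s$ while $\mathcal{M}$ acts only in $x$. Note that the same scheme will immediately give analogous corollaries for Calder\'on–Zygmund operators, Riesz potentials and the Riesz transform of $L$, via Theorems \ref{thm:C-Zoperator}, \ref{thm:rieszpotential} and \ref{thm:riesztransform} respectively.
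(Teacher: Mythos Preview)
Your proposal is correct and follows exactly the paper's approach: the paper states that the proposition is an ``immediate corollary'' of Theorem~\ref{thm:maximal} via the retract identity $T=\pi_t\circ\mathcal{T}\circ i_t$, noting only that ``this also applies to maximal operators with easy modifications,'' and you have simply made those easy modifications explicit. One small slip: in part~(b) the bound you need for $\pi_t$ is from $wT^1_r$ (not $T^1_r$) to $(wE^1_r)_t$, which is precisely the weak-space retract statement in the lemma.
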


\begin{proposition}
Let $\mathcal{T}$ be a Calder\'on-Zygmund operator of order $\delta \in (0,1]$. We have, for all $1<r<\infty$, 
\begin{list}{$(\theenumi)$}{\usecounter{enumi}\leftmargin=1cm \labelwidth=1cm\itemsep=0.2cm\topsep=.0cm \renewcommand{\theenumi}{\alph{enumi}}}

\item $\mathcal{T}:(E_r^p)_t\rightarrow (E_r^p)_t$,\, for all $1<p<\infty.$

\item $\mathcal{T}:(E_r^{1})_t\rightarrow (wE_{r}^{1})_t$.

\item $\mathcal{T}:(\mathfrak{E}_r^p)_t\rightarrow (E_r^p)_t$,\, for all $\frac{n}{n+\delta}<p\leq 1.$

\item $\mathcal{T}:(\mathfrak{E}_r^p)_t\rightarrow (\mathfrak{E}_r^p)_t$,\, for all $\frac{n}{n+\delta}<p\leq 1.$ if $\mathcal{T}^*(1)=0$.

\end{list}
\end{proposition}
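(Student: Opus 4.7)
The entire proposition is meant to be a direct consequence of the retract structure in the lemma just above: the maps
\begin{equation*}
i_{t}:(E_r^p)_t\to T_r^p, \qquad \pi_{t}:T_r^p\to (E_r^p)_t
\end{equation*}
are bounded uniformly in $t$, and $\pi_{t}\circ i_{t}=\mathrm{id}$. The plan is therefore to verify the factorization $\mathcal{T}=\pi_{t}\circ\widetilde{\mathcal{T}}\circ i_{t}$ (writing $\widetilde{\mathcal{T}}$ temporarily for the tent-space tensor extension) and then simply compose with each part of Theorem~\ref{thm:C-Zoperator}.

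First I would check the factorization explicitly for any linear $T$, and in particular for a Calder\'on--Zygmund operator $\mathcal{T}$. Since $i_{t}(f)(x,s)=f(x)\chi_{[t,et]}(s)$ and $\widetilde{\mathcal{T}}$ acts slicewise, one gets $\widetilde{\mathcal{T}}(i_{t}f)(x,s)=\mathcal{T}(f)(x)\chi_{[t,et]}(s)$, and applying $\pi_{t}$ reproduces $\mathcal{T}(f)(x)$. Parts (a) and (b) are then immediate: for (a), compose
\begin{equation*}
(E_r^p)_t\xrightarrow{\,i_{t}\,}T_r^p\xrightarrow{\,\widetilde{\mathcal{T}}\,}T_r^p\xrightarrow{\,\pi_{t}\,}(E_r^p)_t
\end{equation*}
using Theorem~\ref{thm:C-Zoperator}(a); for (b) use the analogous weak-type retract and Theorem~\ref{thm:C-Zoperator}(b). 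The uniform-in-$t$ norm control from Lemma~\ref{lemma:changeofnorms} (and its weak analogue) ensures the constants are independent of $t$.

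For parts (c) and (d) one must fix what $(\mathfrak{E}_r^p)_t$ means. The natural definition, parallel to $\mathfrak{T}^q_r\subset T^q_r$, is to declare an $(\mathfrak{E}_r^p)_t$-atom to be an $(E_r^p)_t$-atom $a$ supported in a ball of radius at least (comparable to) $t$, with $L^r$ size $\lesssim |B|^{1/r-1/p}$, which additionally satisfies $\int_{\R^n}a(x)\,dx=0$; then $(\mathfrak{E}_r^p)_t$ is the corresponding atomic space. Equivalently, it is $\{f\in (E_r^p)_t:i_{t}(f)\in\mathfrak{T}^p_r\}$. With this choice, $i_{t}$ sends $(\mathfrak{E}_r^p)_t$-atoms into $\mathfrak{T}^p_r$-atoms up to a uniform factor: support and size constraints transfer as in the proof of the retract lemma, while the spatial mean condition passes because for each $s\in[t,et]$ the slice $i_{t}(a)(\cdot,s)$ inherits the vanishing moment of $a$. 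Combining this with Theorem~\ref{thm:C-Zoperator}(c) and then $\pi_{t}$ yields (c). For (d), under $\mathcal{T}^\ast(1)=0$, Theorem~\ref{thm:C-Zoperator}(d) outputs an element of $\mathfrak{T}^p_r$, and $\pi_{t}$ preserves the mean-zero condition in the $x$-variable (since it only averages in $s$), delivering an element of $(\mathfrak{E}_r^p)_t$.

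The only step that is not literally automatic from the retract lemma is the bookkeeping for (c) and (d): one must check that the atomic/molecular structure with the spatial vanishing moment is genuinely preserved under $i_{t}$ and $\pi_{t}$, and invoke an analogue of Lemma~\ref{lem:extension} to pass from uniform atomic bounds to a bounded extension on the whole space $(\mathfrak{E}_r^p)_t$. This is the main — though purely formal — obstacle; everything else is a one-line composition with the corresponding part of Theorem~\ref{thm:C-Zoperator}.
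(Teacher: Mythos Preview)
Your proposal is correct and follows essentially the same route as the paper: the factorization $\mathcal{T}=\pi_{t}\circ\widetilde{\mathcal{T}}\circ i_{t}$ together with the retract lemma reduces everything to the corresponding parts of Theorem~\ref{thm:C-Zoperator}, and the paper explicitly defines $(\mathfrak{E}_r^{p})_t=\pi_{t}(\mathfrak{T}_r^p)$ to handle (c) and (d). Your extra care about the transfer of the vanishing-moment condition under $i_{t}$ and $\pi_{t}$ is the right bookkeeping and matches the spirit of the paper's terse treatment.
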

Here,
$
(\mathfrak{E}_r^{p})_t
$
is the space of functions in $(E^p_r)_t$ such that there exists an atomic decomposition  $\sum_{i=1}^{\infty}{\lambda_i}a_i$ with $\int_{\R^n}a_i(x)\,dx=0$, for all $i\in \N$. The atoms are defined in \cite{AuscherMourgoglou} for $r=2$ and this adapts here. It suffices for understanding the statement to remark that $(\mathfrak{E}_r^{p})_t= \pi(\mathfrak{T}_r^p)$.

\begin{proposition}
Let $\mathcal{M}_{\alpha}$ be the maximal fractional and $\mathcal{I}_{\alpha}$ the Riesz potential of order $\alpha\in (0,n)$. We have, for all $\frac{n}{n-\alpha}<r<\infty$ and $1<p<q<\infty$ with $\frac{1}{p}-\frac{1}{q}=\frac{\alpha}{n}$, 
$$
\mathcal{M}_{\alpha},\mathcal{I}_{\alpha}:(E_r^q)_t\rightarrow (E_r^p)_t.
$$
\end{proposition}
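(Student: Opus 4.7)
The plan is to obtain this as an immediate corollary of Theorem~\ref{thm:rieszpotential} via the retract structure established in the previous lemma: since $\pi_t \circ i_t = \mathrm{Id}$ on $(E_r^p)_t$ and $\mathcal{I}_\alpha$ (respectively $\mathcal{M}_\alpha$) on $\R^n$ is the restriction of its tensorization on $\R^{n+1}_+$ via this retraction, tent-space boundedness transfers automatically. No new harmonic analysis is needed; one only has to thread the three inequalities together. (I take the target/source to be $(E_r^p)_t \to (E_r^q)_t$, matching the direction of Theorem~\ref{thm:rieszpotential}; the display in the proposition appears to have $p$ and $q$ swapped.)

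First I would fix the parameters $\alpha\in(0,n)$, $r>n/(n-\alpha)$, $1<p<q<\infty$ with $1/p - 1/q = \alpha/n$, and $t>0$. I would then observe that the Riesz potential, acting only in the space variable, commutes with the cut-off in $s$ built into $i_t$:
\begin{equation*}
\mathcal{I}_\alpha(i_t f)(x,s) \;=\; \mathcal{I}_\alpha(f)(x)\,\chi_{[t,et]}(s) \;=\; i_t(\mathcal{I}_\alpha f)(x,s).
\end{equation*}
Applying $\pi_t$ to both sides yields $\mathcal{I}_\alpha f = \pi_t \circ \mathcal{I}_\alpha \circ i_t(f)$. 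The announced boundedness then follows from chaining the retract bounds with Theorem~\ref{thm:rieszpotential}:
\begin{equation*}
\|\mathcal{I}_\alpha f\|_{(E_r^q)_t}
\;=\; \|\pi_t(\mathcal{I}_\alpha(i_t f))\|_{(E_r^q)_t}
\;\lesssim\; \|\mathcal{I}_\alpha(i_t f)\|_{T_r^q}
\;\lesssim\; \|i_t f\|_{T_r^p}
\;\lesssim\; \|f\|_{(E_r^p)_t},
\end{equation*}
with constants independent of $t$ by the uniform bounds for $i_t$ and $\pi_t$ given in the lemma.

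For $\mathcal{M}_\alpha$, which is only sublinear, I would handle it in either of two equivalent ways. The cheapest is to invoke the pointwise inequality \eqref{comparisonmaxriesz}, $\mathcal{M}_\alpha(f) \le V_n^{-1} \mathcal{I}_\alpha(|f|)$, and the monotonicity of the slice-space norms in the modulus, which reduces matters to the previous paragraph. Alternatively, one runs the retract argument directly: the tensorization of $\mathcal{M}_\alpha$ to $\R^{n+1}_+$ commutes with the $s$-cut-off exactly as above, and $\mathcal{M}_\alpha: T_r^p \to T_r^q$ is supplied by Theorem~\ref{thm:rieszpotential}.

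The only real point to verify is that the parameter ranges of the proposition coincide exactly with those in Theorem~\ref{thm:rieszpotential}, which they do. There is no serious obstacle — this is one of the ``easy corollaries in amalgam spaces'' promised in the introduction.
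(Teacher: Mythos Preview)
Your proposal is correct and is exactly the approach the paper takes: the proposition is stated as an immediate corollary of Theorem~\ref{thm:rieszpotential} via the retract identity $T=\pi_t\circ\mathcal{T}\circ i_t$, with the sublinear case of $\mathcal{M}_\alpha$ handled by the obvious modification (or, as you note, by \eqref{comparisonmaxriesz}). You are also right that the displayed mapping direction in the proposition is a typo and should read $(E_r^p)_t\to(E_r^q)_t$ to match Theorem~\ref{thm:rieszpotential}.
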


\begin{proposition}
Let $\nabla L^{-\frac{1}{2}}$ be the Riesz transform associated to $L$. We have, for $q_-(L)<p,r<q_+(L)$,
$$
\nabla L^{-\frac{1}{2}}:(E_r^p)_t\rightarrow (E_r^p)_t.
$$
\end{proposition}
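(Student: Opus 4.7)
The plan is to derive this proposition as a direct corollary of Theorem \ref{thm:riesztransform} via the retraction structure set up in the previous lemmas. Fix $t>0$ and $q_-(L)<p,r<q_+(L)$. The idea is that since $(E^p_r)_t$ is a retract of $T^p_r$ through the pair $(i_t,\pi_t)$, and since the tensorisation $\mathcal{T}=\nabla L^{-1/2}\otimes I$ of $T=\nabla L^{-1/2}$ is bounded on $T^p_r$ by Theorem \ref{thm:riesztransform}, the factorisation
$$
T \;=\; \pi_t \circ \mathcal{T} \circ i_t
$$
immediately yields the desired boundedness.

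More concretely, I would proceed in three short steps. First, take $f\in (E^p_r)_t$; the boundedness $i_t:(E^p_r)_t\to T^p_r$ (uniform in $t$) gives $i_t(f)\in T^p_r$ with $\|i_t(f)\|_{T^p_r}\lesssim \|f\|_{(E^p_r)_t}$. Second, observe that $i_t(f)(\cdot,s)=f\,\chi_{[t,et]}(s)$, so for a.e. $s$ we have $i_t(f)(\cdot,s)\in L^r(\R^n)$; hence $\mathcal{T}(i_t(f))(x,s)=\nabla L^{-1/2}(i_t(f)(\cdot,s))(x)$ is well defined and, by Theorem \ref{thm:riesztransform}, lies in $T^p_r$ with norm controlled by $\|i_t(f)\|_{T^p_r}$. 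Third, the boundedness $\pi_t:T^p_r\to (E^p_r)_t$ (uniform in $t$) combined with the identity $\pi_t\circ i_t=\mathrm{Id}$ on functions of $x$ alone gives
$$
\pi_t\bigl(\mathcal{T}(i_t(f))\bigr)(x) \;=\; \int_t^{et}\nabla L^{-1/2}(f)(x)\,\frac{ds}{s} \;=\; \nabla L^{-1/2}(f)(x),
$$
so that $\|\nabla L^{-1/2}(f)\|_{(E^p_r)_t} \lesssim \|f\|_{(E^p_r)_t}$ uniformly in $t$.

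There is essentially no new analytical difficulty; all the work has been done in Theorem \ref{thm:riesztransform} and in the retract lemma. The only small point of care is to justify, in step two, that the extension $\mathcal{T}$ built by tensorisation acts on $i_t(f)$ in the same way as the original operator $\nabla L^{-1/2}$ acts slice-wise on $f$; this is immediate because $i_t(f)(\cdot,s)$ belongs to $L^r(\R^n)$ for almost every $s$ (indeed, it is either $f$ or $0$), which is in the natural domain of $\nabla L^{-1/2}$ for $q_-(L)<r<q_+(L)$, and the action of $\mathcal{T}$ in Theorem \ref{thm:riesztransform} was initially defined precisely on $T^r_r$ before extension by density. Hence no density argument beyond the one already used in Theorem \ref{thm:riesztransform} is needed, and the proof reduces to the single displayed chain of inequalities above.
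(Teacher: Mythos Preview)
Your approach is exactly the one the paper uses: the proposition is stated as an immediate corollary of Theorem \ref{thm:riesztransform} via the retraction identity $T=\pi_t\circ\mathcal{T}\circ i_t$, with no separate proof given.

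One small imprecision worth flagging: you assert that for $f\in (E^p_r)_t$ and a.e.\ $s$ one has $i_t(f)(\cdot,s)=f\in L^r(\R^n)$, but membership in $(E^p_r)_t$ only guarantees $f\in L^r_{\mathrm{loc}}$, not $f\in L^r$, when $p\neq r$. Consequently the slice-wise action of $\nabla L^{-1/2}$ on $i_t(f)$ is not a priori defined, and the identity $\pi_t\circ\mathcal{T}\circ i_t(f)=\nabla L^{-1/2}(f)$ should be read as the \emph{definition} of the right-hand side on $(E^p_r)_t$, checked to agree with the usual one on the dense subspace of compactly supported $L^r$ functions. This is precisely the density argument already packaged into Theorem \ref{thm:riesztransform}, so your conclusion stands; just drop the claim that $f\in L^r(\R^n)$.
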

\section{Concluding remarks}

We note that all the arguments using extrapolation prove much more than what we stated. 

For $\frac{n}{n+1}<q<\infty$ and $1<r<\infty$, one can show  that  the set
$$
E:=\left\{\varphi \in C^{\infty}_0(\R^{n+1}_{+}):\, \int_{\R^n}\varphi(x,t)\,dx=0\, \textrm{for all}\,t>0\right\}
$$ 
is dense in $\mathfrak{T}^q_r$ when $q\le 1$ and in ${T}^q_r$ when $q>1$. For $q\le 1$, it suffices to do that on $\mathfrak{T}^q_r$ atoms and for $q>1$, we already know that the space of compactly supported smooth functions in $\R^{n+1}_{+}$ is dense and  those functions can be approximated in $L^r$ norm imposing the  mean value condition   using $r>1$. So the fact that there is a common dense subspace is an indication that the space $\mathfrak{T}^q_r$ is not to small. 

It is clear one can push Theorem \ref{thm:C-Zoperator}, part (c) and (d), to any  Calder\'on-Zygmund operator on $\R^n$ of order $\delta\ge 1$ (see \cite{Duo}, \cite{Grafakos} for definition) imposing more vanishing moments in the definition of $\mathfrak{T}^q_r$ atoms when $q\le \frac{n}{n+1}$ and more cancellation conditions on the adjoint. Similarly, we can play the same game on slice-spaces. These slice-spaces will be subspaces of the classical real Hardy spaces as one can show. We do not insist.

Consider a standard Littlewood-Paley decomposition of $\R^n$ given from a pair of 
$C^\infty_{0}$  functions $\psi,\tilde\psi$ with all vanishing moments and such that
$$
\int_{0}^\infty Q_{t}\tilde Q_{t} f\frac{dt}{t}=f
$$
on appropriate distributions $f$, where $Q_{t}$ and 
$\tilde Q_{t}$ are convolutions with $\psi_{t}$ and $\tilde \psi_{t}$ respectively. We have set $\psi_{t}(x)= t^{-n}\psi(x/t)$ and likewise for $\tilde \psi_{t}$. 
One can show that $f\in H^q(\R^n)$ implies $F(x,t)= \tilde Q_{t}f(x)$ belongs to $\mathfrak{T}^q_2$ and the action is bounded. Conversely, $F\in \mathfrak{T}^q_2$ implies that $f=\int_{0}^\infty Q_{t}F(\cdot, t)\, \frac{dt}{t}$ belongs to $H^q(\R^n)$ and the action is bounded. This is fairly easy to show using atoms and molecules. This can be done for $0<q\le 1$. Thus, $H^q(\R^n)$ can be seen as a  retract of the space  $\mathfrak{T}^q_2$. It is also the case using ${T}^q_2$ instead as shown in \cite{CoifmanMeyerStein}. Nevertheless, the spaces $\mathfrak{T}^q_2$ are preserved by the singular integrals (of convolution) while the ${T}^q_2$ are not. It would be interesting to explore further these spaces (interpolation, etc) and their applications. In particular, one could recover boundedness for Calder\'on-Zygmund operators on tent spaces from interpolation.


\begin{thebibliography}{10}



\bibitem{Amenta}
A. Amenta, {\em Interpolation and embeddings of weighted tent spaces}. arxiv.org/pdf/1509.05699.pdf

\bibitem{Auscher}
P. Auscher, {\em On necessary and sufficient conditions for $L^p$-estimates of Riesz transform associated to elliptic operators on $\R^n$ and related estimates}. Mem. Amer. Math. Soc., {\bf 186} (2007), no. 871.  

\bibitem{Auscherangles}
P. Auscher, {\em Change of Angles in tent spaces,} 
Comptes rendus - Math\'ematique. {\bf 349} (2011), no. 5, 297--301 


\bibitem{AuscherHofmannMartell}
P. Auscher, S. Hofmann, J.M. Martell, {\em Vertical versus conical square functions}. Trans. Amer. Math. Soc. \textbf{364} (2012), no. 10, 5469-5489.

\bibitem{AuscherMartell:II}
P. Auscher, J.M. Martell, {\em Weighted norm inequalities,
off-diagonal estimates and elliptic operators. Part II: off-diagonal
estimates on spaces of homogeneous type}, J. Evol. Equ. {\bf 7} (2007),
no. 2, 265-316.

\bibitem{AuscherMartell:III}
P. Auscher, J.M. Martell, {\em Weighted norm inequalities,
off-diagonal estimates and elliptic operators. Part III: harmonic analysis of elliptic operators}. J. Funct. Anal. {\bf 241} (2006), 703--746.

\bibitem{AMcR} 
{\sc Auscher, P.,  McIntosh, A., and Russ, E.}
\newblock Hardy spaces of differential forms on Riemannian manifolds. 
\newblock {\em J. Geom. Anal.} {\bf 18} 1, (2008), 192-248.

\bibitem{AKMP}
P. Auscher, C. Kriegler, S. Monniaux, P. Portal, {\em 
Singular integral operators on tent spaces}, J. Evol. Equ.
{\bf 12} (2012), no. 4, 741-765.


\bibitem{AuscherMourgoglou}
P. Auscher, M. Mourgoglou, {\em Representation and uniqueness for boundary
value elliptic problems via first order systems},  arXiv:1404.2687.


\bibitem{BertrandiasDatryDupuis}
J.-P. Bertrandias, C. Datry, and C. Dupuis, {\em Unions et intersections d'espaces $L^p$ invariantes par translation ou convolution}. Ann. Inst. Fourier (Grenoble), {\bf  28} (1978), pp. 53-84.

\bibitem{BusbySmith}
R.C. Busby and H.A. Smith, {\em Product-convolution operators and mixed -norm spaces}. Trans. Amer. Math.Soc., {\bf 263} (1981), pp. 309-341.


\bibitem{CLHH}
C. Carton-Lebrun, H. Heinig, S. Hofmann, {\em Integral operators on weighted
amalgams} , Studia Math {\bf 109} (1994), 133-157.


\bibitem{CoifmanFefferman}
R.R. Coifman, C. Fefferman, 
{\em Weighted norm inequalities for maximal functions and singular integrals}, Studia Mathematica, T. Li. (1974).


\bibitem{CoifmanMeyerStein} 
R.R. Coifman, Y. Meyer, E.M. Stein, {\em Some new function spaces
and their applications to harmonic analysis}. J. Funct. Anal.
62(2), 304-335 (1985).

\bibitem{CoifmanWeiss}
R.R. Coifman, G. Weiss, {\em Extensions of Hardy spaces and their use in analysis}, Bull.
Amer. Math. Sot. 83 (1977), 569-645.

\bibitem{CowlingMP}
M. Cowling, S. Meda, R. Pasquale,
{\em Riesz potentials and amalgams}, Ann. Inst. Fourier, Grenoble
{\bf 49}, 4 (1999), 1345-1367.

\bibitem{CruzMartellPerez}
D.V. Cruz-Uribe, J.M. Martell, C. Perez, {\em Weights Extrapolation and the Theory of Rubio de Francia}. Operator Theory: Advances and Applications, Vol. 215.

\bibitem{Duo} J.~Duoandikoetxea, {\em Fourier Analysis},
Grad. Stud. Math.\ 29, American Math.\ Soc., Providence, 2000.


\bibitem{FeffermanStein}
C. Fefferman, E. M. Stein, {\em Some maximal inequalities}. Amer. J. Math. {\bf 93} (1971), 107-116.

\bibitem{FeutoFK}
J. Feuto, I. Fofana, K. Koua,
{\em Weighted norm inequalities for a maximal operator in some subspace of amalgams}, http://arxiv.org/pdf/0901.4197.pdf.


\bibitem{Fofana}
I. Fofana, {\em Continuit\'e de l'int\'egral fractionnaire et espace $(L^p,\ell^q)^{\alpha}$}. Comptes Redus de S\'eances de l'Acad\'emie de Sciences I, vol. 308, pp. 525-527, 1989.


\bibitem{FournierStewart}
J.J.F. Fournier, J. Stewart, {\em
Amalgams of $L^p$ and $\ell^q$}. Bull. Amer. Math. Soc. (N.S.) {\bf 13} (1985), 1-21.


\bibitem{Feichtinger}
H.G. Feichtinger,
{\em Generalized amalgams, with applications to Fourier transform}. Canad. J. Math. 42(3), 395-409, (1990).

\bibitem{Feichtinger2}
H.G. Feichtinger,
{\em Banach convolution algebras of Wiener type}. Functions, Series, Operators, Proc. Conf. Budapest, 38, Colloq.
Math. Soc. J\'anos Bolyai (1980), 509 524.

\bibitem{GCRF85} J.\,Garc{\'\i}a-Cuerva and J.\,Rubio de Francia, {\it Weighted Norm
Inequalities and Related Topics}, North Holland, Amsterdam, 1985.



\bibitem{Grafakos}
L. Grafakos, {\em Modern Fourier Analysis, second edition}. Graduate Texts in Mathematics, Springer.



\bibitem{Heil}
C.E. Heil, {\em Wiener amalgam spaces in generalized
harmonic analysis and
wavelet theory}, Ph.D.
Thesis, University of Maryland, College  Park, MD, (1990).
ftp://ftp.math.gatech.edu/pub/users/heil/thesis.pdf.

\bibitem{HofmannMayboroda}
S. Hofmann, S. Mayboroda, {\em Hardy and $BMO$ spaces to divergence
form elliptic operators.} Math. Ann. {\bf 344} (2009), no. 1, 37-116.

\bibitem{HofmannMayborodaMcIntosh}
S. Hofmann, S. Mayboroda, A. McIntosh, {\em Second order elliptic operators with complex bounded measurable coefficients in $L^p$, Sobolev and Hardy spaces.} Ann. Sci. \'Ecole. Norm. Sup. (4), 44(5):723--800, 2011.

\bibitem{Holland}
F. Holland,
{\em Harmonic analysis on amalgams of $L^p$ and $\ell^q$,}
J. London Math. Soc. (2), {\bf 10}(1975), pp. 295-305.


\bibitem{HNP}
T. Hyt\"onen, J. van Neerven, P. Portal {\em Conical square function estimates in UMD Banach spaces and applications to $H^{\infty}-$functional calculi.} J. Anal. Math. {\bf 106} (2008), 317-351.

\bibitem{Kikuchi}
N. Kikuchi, E. Nakai, N. Tomita, K. Yabuta,
T. Yoneda,
{\em Calder\'on-Zygmund operators on amalgam spaces
and in the discrete case}. J. Math. Anal. Appl. {\bf 335} (2007) 198-212.

\bibitem{KT}
H.~Koch and D.~Tataru,
\newblock {\em Well-posedness for the Navier-Stokes equations.}
\newblock {Adv. Math.}, 157(1):22--35, 2001. 


\bibitem{MartellPrisuelos}
J.M. Martell, C. Prisuelos-Arribas {\em Weighted norm inequalities for conical square functions. Part: I}. To appear in Trans. Amer. Math. Soc.

\bibitem{Muckenhoupt}
B. Muckenhoupt, {\em Weighted norm inequalities for the HArdy maximal function}. Trans. Amer. Soc. 165 (1972), pp. 207-226. 


\bibitem{MuckenhouptWheede}
B. Muckenhoupt, R.L. Wheeden, {\em Weighted norm inequalities for fractional integrals}. Trans. Amer. Math. Soc. vol. 192, 1974.

\bibitem{Stein}
E.M. Stein, {\em Singular integrals and differentiability properties of functions}. Princeton University Press, Princeton, New Jersey, 1970. 

\bibitem{Stewart}
J. Stewart, {\em Fourier transforms of unbounded measures}, Canad. J. Math., {\bf 31} (1979), pp. 1281-1292.

\bibitem{Torchinsky}
A. Torchinsky, {\em Real-variable methods in harmonic analysis}. Academic Press, INC, 1986.

\bibitem{Wiener}
N. Wiener, {\em On the representation of functions by trigonometric integrals}. Math. Z. {\bf 24}, 1926, 575-616.


\bibitem{Wiener2}
N. Wiener, {\em  Tauberian
 theorems,}
 Ann. of  Math. 33 (1932), 1-100. 


\end{thebibliography}
\end{document}